\numberwithin{equation}{section}
\newtheorem{theorem}{Theorem}[section]
\newtheorem{prop}[theorem]{Proposition}
\newtheorem{lemma}[theorem]{Lemma}
\newtheorem{corollary}[theorem]{Corollary}
\newtheorem{proposition}[theorem]{Proposition}
\newtheorem{example}[theorem]{Example}
\theoremstyle{definition}
\newtheorem{definition}[theorem]{Definition}
\newtheorem{remark}[theorem]{Remark}
\newcommand{\Extend}[5]{\ext@arrow0099{\arrowfill@#1#2#3}{#4}{#5}}
\newcommand{\R}{\mathbb{R}}  
\newcommand{\Z}{\mathbb{Z}}  
\newcommand{\N}{\mathbb{N}}  
\newcommand{\CC}{\mathbb{C}}
\newcommand{\II}{\mathrm{i}}
\newcommand{\p}{\partial} 
\newcommand{\pb}{\bar{\partial}}
\newcommand{\ud}{\mathrm{d}}
\newcommand{\IP}[2]{\langle #1, #2\rangle}
\newcommand{\RRe}[1]{\mathrm{Re}(#1)}
\newcommand{\IIm}[1]{\mathrm{Im}(#1)}
\newcommand{\bs}[1]{\boldsymbol{#1}}
\begin{document}
\title{On nondegenerate $\Z_{2}$-harmonic $1$-forms with shrinking branching sets}

\author[Dashen Yan]{Dashen Yan}
\address{Department of Mathematics, Stony Brook University, Stony Brook, NY 11794-3651 USA. E-mail: dashen.yan@stonybrook.edu}

\maketitle
\begin{abstract}
	We develop a gluing theorem for non-degenerate $\Z_{2}$-harmonic $1$-forms on compact manifolds, in which non-degenerate $\Z_{2}$-harmonic $1$-forms on $\R^{n}$ are glued to the regular zeros of a non-degenerate $\Z_{2}$-harmonic $1$-form. As a consequence, we show that for every compact oriented manifold $M^{n}, n\geq 3$, if the first Betti number $b^{1}(M)>0$, then $M$ admits a family of non-degenerate $\Z_{2}$-harmonic $1$-forms, which resolves a folklore conjecture. We will also discuss several possible applications to special holonomy, in particular, to the field of $G_{2}$-geometry.
\end{abstract}

\section{Introduction}\label{S_1}

	Let $(M^{n},g)$ be a Riemannian manifold of dimension $n$ and $\Sigma$ be a closed subset of Hausdorff dimension at most $n-2$. A $\Z_{2}$-harmonic $1$-form $\alpha$ is a $2$-valued $1$-form defined on $M^{n}\setminus \Sigma$, with $|\alpha|, |\nabla \alpha|\in L^{2}_{loc}$. The concept of $\Z_{2}$-harmonic $1$-form was introduced by Taubes in his study of the compactification of flat $PSL(2,\CC)$ connections on $3$-manifolds \cite{Taubes13A}. Later the $\Z_{2}$-harmonic $1$-forms also appear in various compactification problems in gauge theory \cite{Haydys15,Esfahani24,Taubes13A,Taubes13B,SqiHe24B} and in the problems of calibrated submanifolds \cite{SqiHe23}.\\
	
	Examples of $\Z_{2}$-harmonic $1$-forms have been studied by many people \cite{Haydiy23,SqiHe24,SqiHe25,Taubes20,Taubes24,Chen24,Donaldson25B,Yan25}. Of particular interest is the case when $\Sigma$ is an oriented codimension-$2$ submanifold in $M$. In this case, the local asymptotic behavior near the singular set and global deformation theory with respect to the metric and the cohomology class are well understood. We now give a precise definition in this case.
		\begin{definition}\label{S_1_Defn_Z2}
			A multivalued-harmonic $1$-form consisting of the following data
				\begin{enumerate}
					\item An oriented codimension-$2$ submanifold $\Sigma\subset M$,
					\item A flat real line bundle $L$ over $M\setminus \Sigma$, such that any loop linking along $\Sigma$ has monodromy $-1$.
				\end{enumerate}
			A section $\alpha\in \Gamma(M\setminus \Sigma, T^{*}M\otimes L)$ is called a multivalued-harmonic $1$-form if $|\alpha|\in L^{2}_{loc}$ satisfies the harmonic equation
				\begin{equation*}
					\ud \alpha=\ud ^{*}\alpha=0.
				\end{equation*}
			A multivalued-harmonic $1$-form is called \textbf{$\Z_{2}$-harmonic} if, in addition, $|\nabla \alpha|\in L^{2}_{loc}$. Moreover, let $r$ be the distance to the branching set $\Sigma$. The $\Z_{2}$-harmonic $1$-form is called \textbf{non-degenerate} if the quantity $r^{-1/2}|\alpha|$ is uniformly bounded from below near $\Sigma$.
		\end{definition}
		
		\begin{remark}\label{S_1_Rmk_Def}
			For simplicity, we also view a harmonic $1$-form as a non-degenerate $\Z_{2}$-harmonic $1$-form by setting $\Sigma=\emptyset$ and $L$ to be the trivial real line bundle.
		\end{remark}
	Let $N$ be the normal bundle of $\Sigma$, and use the inverse of the exponential to define the map $\zeta$ from a tubular neighborhood of $\Sigma$ to $N$ in $(M,g)$. The coordinate map $\zeta$ is referred to as \textbf{Fermi coordinate} on $\Sigma$. Using co-orientation, we can regard $N$ as a complex line bundle over $\Sigma$. If $\sigma$ is a section of the dual normal bundle $N^{-1}$, we obtain a complex-valued function $\IP{\sigma}{\zeta}$, which we will simply denote as $\sigma \zeta$. The monodromy around $\Sigma$ defines a square root $N^{1/2}$ with dual $N^{-1/2}$. Similarly, we can define a complex-valued function
		\begin{equation*}
			\sigma_{p}\zeta^{p}, \sigma_{p}\in \Gamma(\Sigma, N^{-p}),
		\end{equation*}
		for any half-integer $p$. According to the work of Donaldson \cite{Donaldson17}, the multivalued-harmonic $1$-form $\alpha$ satisfies
		\begin{equation*}
			\alpha=\ud \bigg(\RRe{A(t)\zeta^{\frac{1}{2}}+B(t)\zeta^{\frac{3}{2}}}-\frac{1}{2}\RRe{\bar{\mu}\zeta}\RRe{A(t)\zeta^{\frac{1}{2}}}+o(|\zeta|^{\frac{3}{2}})\bigg),
		\end{equation*}
		near the branching set $\Sigma$. Here, $A\in \Gamma(\Sigma, N^{-\frac{1}{2}})$, $B\in \Gamma(\Sigma, N\Sigma^{-\frac{3}{2}})$, and $\mu\in \Gamma(\Sigma, N)$ is the mean curvature of $\Sigma$ in $(M,g)$. In particular, a multivalued harmonic $1$-form is $\Z_{2}$-harmonic if and only if $A\equiv 0$ and is nondegenerate if, in addition, $B(t)\neq 0$ for all $t \in \Sigma$.\\
	
	We define the set $Z=\{x\in M^{n}\big||\alpha|=0\}=0$ to be the \textbf{zero} of $\alpha$. We also call $\Sigma$ the \textbf{branching zero} and $Z\setminus \Sigma$ the \textbf{ordinary zero}. Moreover, if the $\alpha$ intersects the zero section transversely at $p\in Z\setminus \Sigma$, then we call $p$ a \textbf{regular zero}. Pick a trivialization of $L$ near an ordinary zero $p$, we can write $\alpha$ as a differential of a harmonic function $f$. We may normalize so that $f(p)=0$. Under the geodesic coordinates $\{\bs{x}\}$ near $p$, the harmonic function admits the following Taylor series expansion
		\begin{equation}\label{S_1_Eqn_Taylor}
			f=P_{2}(\bs{x})+O(|\bs{x}|^{3}),
		\end{equation}
		where $P_{2}$ defines a homogeneous harmonic polynomial of degree $2$ on $\R^{n}$. In particular, $p$ is a regular zero if and only if $P_{2}$ is a nondegenerate quadric. We define the Morse index of $P_{2}$ to be the \textbf{Morse index} of $\alpha$ at $p$. Noted here that the index is defined up to multiplication by $-1$ in $\Z/n\Z$.\\

	The main result of this paper is a gluing construction of a one parameter family of distinct nondegenerate $\Z_{2}$-harmonic $1$-forms that converge to a nondegenerate $\Z_{2}$-harmonic $1$-form with a non-empty regular zero set. In particular, some of the branching sets shrink to the regular zeros.
		\begin{theorem}\label{S_1_Thm_main}
			Let $(M^{n},g)$ be a compact Riemannian manifold with dimension $n\geq 3$, and $\alpha$ is a nondegenerate $\Z_{2}$-harmonic $1$-form with branching set $\Sigma$ being a codimension-$2$ submanifold. Let $\mathcal{R}=\{p_{1},\cdots, p_{q},\cdots\}$ be a non-empty subset of regular zeros. Suppose further that for each $p_{q}\in \mathcal{R}$, there exists a non-degenerate $\Z_{2}$-harmonic $1$-form $\alpha_{q}$ on $(\R^{n},g_{\R^{n}})$ with the following matching conditions
				\begin{enumerate}
					\item The branching set $\Sigma_{q}\subset \R^{n}$ is compact, and therefore the associated real line bundle $L_{q}$ is trivial outside a large compact set.
					\item Pick a trivialization of $L_{q}$ outside a large compact set, the $\Z_{2}$-harmonic $1$-form admits
						\begin{equation*}
							\alpha_{q}=\ud\big( P_{2}(\bs{y})+O'(|\bs{y}|^{2-n})\big), |\bs{y}|\to \infty.
						\end{equation*}
						Here, $P_{2}$ is the degree $2$ harmonic polynomial given by the Taylor's expansion of $\alpha$ at $p$ in Equation \ref{S_1_Eqn_Taylor}, and $O'(|\bs{y}|^{m})$ denotes a class of functions such that
						\begin{equation*}
							|\nabla^{k}f|=O(|\bs{y}|^{m-k}).
						\end{equation*}
				\end{enumerate}
			Then there exists a one parameter family of non-degenerate $\Z_{2}$-harmonic $1$-forms $\alpha_{\epsilon},0<\epsilon<\epsilon_{0}$ with respect to the original metric $g$, such that $\alpha_{\epsilon}$ converges to $\alpha$ away from $\mathcal{R}$ and that there exist diffeomorphisms $\Psi_{q,\epsilon} $ near each $p_{q}\in \mathcal{R}$  
				\begin{equation*}
					\big(\Psi_{q,\epsilon}^{-1}\big)^{*}\alpha_{\epsilon} \to \alpha_{q}, \text{ in } C^{\infty}_{loc}(\R^{n}\setminus \Sigma_{q}),
				\end{equation*}
				as $\epsilon\to 0$.
		\end{theorem}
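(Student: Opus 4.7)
The plan is to follow the standard gluing paradigm: build an approximate solution by splicing $\alpha$ with rescaled copies of each model $\alpha_q$, estimate the resulting error, invert the linearization with bounds uniform in $\epsilon$, and close by a fixed-point argument. Concretely, work in geodesic normal coordinates $\bs{x}$ at each $p_q$, and identify a ball around $p_q$ with a ball in $\R^n$ via a rescaling $\Psi_{q,\epsilon}$ of size $\epsilon$. Because $P_2$ is homogeneous of degree $2$, the appropriately rescaled copy of $\alpha_q$ has leading-order expansion $\ud P_2(\bs{x})$ near $p_q$ in $M$, matching the Taylor expansion (\ref{S_1_Eqn_Taylor}) of $\alpha$ at $p_q$ to top order. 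I would splice $\alpha$ and the scaled $\alpha_q$ by cutoff functions in an annulus $|\bs{x}|\sim \epsilon^{a}$ for a suitable $a\in(0,1)$, producing $\alpha_\epsilon^{\mathrm{app}}$ with branching set $\Sigma_\epsilon:=\Sigma\sqcup \bigsqcup_q \Psi_{q,\epsilon}^{-1}(\Sigma_q)$, whose new components shrink to $p_q$ as $\epsilon\to 0$, and with the flat line bundle $L_\epsilon$ determined by the union of the monodromies.

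Both $\alpha$ and the scaled $\alpha_q$ are genuine $\Z_2$-harmonic on their respective pieces, so the defect $\eta_\epsilon:=(\ud+\ud^*)\alpha_\epsilon^{\mathrm{app}}$ is supported in the gluing annulus and arises from commutators of the cutoff with $\ud+\ud^*$. Using (\ref{S_1_Eqn_Taylor}), $\alpha-\ud P_2=O(|\bs{x}|^2)$, and by the matching condition, $\alpha_q-\ud P_2=O(|\bs{y}|^{1-n})$; with $a$ chosen to balance these two contributions, $\eta_\epsilon$ decays as a positive power of $\epsilon$ in a weighted norm adapted to the multiscale geometry of the problem, accounting for both the $r^{1/2}$ profile near $\Sigma_\epsilon$ and the scale $\epsilon$ concentrating at each $p_q$.

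The main obstacle is to produce, for all small $\epsilon>0$, a right inverse $Q_\epsilon$ of the linearized $\Z_2$-harmonic operator at $\alpha_\epsilon^{\mathrm{app}}$ whose operator norm is bounded independent of $\epsilon$. The domain of this linearization must include variations of $\Sigma_\epsilon$ through sections of its normal bundle, since the coefficient bundle $L_\epsilon$ depends on $\Sigma_\epsilon$. I would establish the uniform bound by a blow-up (contradiction) argument: assume the inverse norms diverge along a sequence $\epsilon_i\to 0$; normalize a sequence of near-kernel elements to unit size and extract weak limits under the two natural rescalings. The unrescaled limit is a deformation of $(M,\alpha)$ and must vanish by the rigidity implied by non-degeneracy of $\alpha$ (i.e.~$B\not\equiv 0$ on $\Sigma$). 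The $\epsilon_i$-rescaled limit centered at $p_q$ lives on $(\R^n,\alpha_q)$, and the $\ud P_2$ asymptotic profile together with non-degeneracy of $\alpha_q$ rules out nontrivial decaying solutions in the appropriate weighted space. Both limits vanishing then contradicts the unit-norm normalization.

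Finally, parameterizing nearby $\Z_2$-harmonic data by a pair $(\beta,s)$, where $\beta$ is a section of $T^*M\otimes L_\epsilon$ and $s$ is a normal vector field deforming $\Sigma_\epsilon$, the $\Z_2$-harmonic equation becomes $\mathcal{L}_\epsilon(\beta,s)=-\eta_\epsilon+\mathcal{Q}(\beta,s)$ with $\mathcal{Q}$ quadratically small; composing with $Q_\epsilon$ turns this into a fixed-point problem to which the contraction mapping theorem applies, producing $(\beta_\epsilon,s_\epsilon)$ of size $o(1)$. The resulting $\alpha_\epsilon$ converges to $\alpha$ off $\mathcal{R}$ and, after rescaling by $\Psi_{q,\epsilon}$, to $\alpha_q$ near each $p_q$; since non-degeneracy is an open condition in the chosen weighted norms, $\alpha_\epsilon$ is non-degenerate for all small $\epsilon$.
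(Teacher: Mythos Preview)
Your outline captures the gluing architecture correctly (splicing in an annulus $|\bs{x}|\sim\epsilon^{a}$, error supported there and small in a weighted norm, the linearization must allow $\Sigma_\epsilon$ to move, and a blow-up/contradiction argument for uniformity in $\epsilon$). The blow-up step you sketch is indeed how the paper obtains the uniform Schauder-type bound for the scalar Laplacian on $L_\epsilon$ (Proposition~\ref{S_3_Prop_linear_est}).

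However, the closing step has a genuine gap: the contraction mapping theorem does not apply here, because the linearized problem \emph{loses derivatives}. Deforming $\Sigma_\epsilon$ by a normal section $v$ amounts to pulling the metric back by a diffeomorphism $\Phi(v)$ and then solving a Poisson equation in the fixed bundle $L_\epsilon$; the coefficients of $\widetilde{\Delta}_{\Phi^{*}g}$ depend on derivatives of $v$, and the boundary operators $\underline{A},\underline{B}$ lose further regularity. Concretely, the paper reduces the problem to a map $\bs{F}^{\epsilon}:v\mapsto \underline{A}^{\epsilon}(s_\epsilon(\Phi(v)^{*}g))$ on sections of $N\Sigma_\epsilon$, and computes (following Donaldson) the derivative formula
\[
D\bs{F}^{\epsilon}(\bs{v})=\tfrac{3}{2}\big(\bs{B}+\underline{B}^{\epsilon}(s_\epsilon)\big)\cdot\bs{v}-\tfrac{1}{2}P(\underline{A}(s_\epsilon)\cdot\bs{v})-\tfrac{\epsilon^{2-\tau}}{2}\bs{F}^{\epsilon}\cdot\langle\mu,\bs{v}\rangle.
\]
Nondegeneracy ($\bs{B}\neq 0$) makes the leading term invertible, but only as an \emph{approximate} inverse: the remaining terms are $\bs{F}^{\epsilon}$-quadratic yet cost derivatives, so one cannot absorb them into a Banach-space contraction. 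The paper therefore builds weighted smoothing operators $S^{\epsilon}_\theta$ adapted to the doubly-weighted geometry (the scale $\rho_\epsilon$ near $p_q$ and the distance $r_\epsilon$ to $\Sigma_\epsilon$), proves tame estimates for $\bs{F}^{\epsilon}$, $D\bs{F}^{\epsilon}$, $D^{2}\bs{F}^{\epsilon}$ and the approximate inverse uniformly in $\epsilon$, and closes with a Nash--Moser--Zehnder iteration (Theorem~\ref{S_App_Thm_HM}). Your proposed pair $(\beta,s)$ also hides the issue that changing $s$ changes the bundle $L_\epsilon$ itself; the paper's gauge-fixing (pull back to a fixed normal structure on $\Sigma_\epsilon$, then read off the $\zeta^{1/2}$-coefficient $\underline{A}$) is precisely what makes the problem well posed, at the price of the derivative loss that forces Nash--Moser.
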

	The key feature of Theorem \ref{S_1_Thm_main} is that the Riemannian metric g remains unchanged. This is a special feature of dimensions at least 3. In contrast, when M is a compact Riemann surface, the space of $\Z_2$-harmonic 1-forms is naturally identified with the finite-dimensional space of quadratic differentials $H^0(M, K_M^{\otimes 2})$. It follows that, once the metric and therefore the complex structure of the Riemann surface is fixed, there can be no sequence of $\Z_2$-harmonic 1-forms whose branching sets is shrinking, equivalently, whose zeros as quadratic differentials collide.
    
    This feature is crucial for geometric applications in special holonomy. On the other hand, if this  condition is dropped, one can apply a modification of Calabi's intrinsic method \cite{Calabi69} to construct a family of metrics $g'_{\epsilon}$ such that the closed $1$-form obtained in the pre-gluing in Section \ref{S_3_2} is harmonic with respect to those new metrics. And moreover, in this case, the non-degenerate condition for the $\Z_{2}$-harmonic $1$-form on $\R^{n}$ can be dropped.\\

	In a recent work \cite{Chen25}, Jiahuang Chen proved a transversality result for non-degenerate $\Z_{2}$-harmonic $1$-forms, namely, under a generic perturbation of both the metric and the cohomology class, the ordinary zeros can be perturbed into regular zeros. This generalizes Ko Honda's result for harmonic $1$-forms in \cite{Honda04}. Consequently, there is a huge supply of non-degenerate $Z_{2}$-harmonic $1$-forms with regular zeros. Combined with the following existence result for non-degenerate $\Z_{2}$-harmonic $1$-forms on $\R^{n}$ for $n\geq 3$ in \cite{Donaldson25B,Yan25}. We have the following existence result of non-degenerate $\Z_{2}$-harmonic $1$-forms on compact manifolds.
		\begin{corollary}\label{S_1_Cor_existence}
			Let $M^{n}, n\geq 3$ be an oriented compact manifold with first Betti number $b^{1}(M)\geq 1$. then there exists a Riemannian metric $g$ such that there exists a family of non-degenerate $\Z_{2}$-harmonic $1$-forms $\alpha_{\epsilon}$ constructed from a harmonic $1$-form $\alpha$ admitting regular zeros of Morse index $1$. In particular, when $n=3$ and $M$ are not a surface bundle over $S^{1}$, then for a generic choice of Riemannian metric, there exists a family of non-degenerate $\Z_{2}$-harmonic $1$-forms.
		\end{corollary}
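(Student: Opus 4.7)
The plan is to produce a pair $(g,\alpha)$ consisting of a Riemannian metric on $M$ and a harmonic $1$-form on $(M,g)$ with at least one regular zero of Morse index $1$, and then apply Theorem \ref{S_1_Thm_main} with $\mathcal{R}$ equal to that zero, viewing $\alpha$ as a $\Z_{2}$-harmonic $1$-form with empty branching set via Remark \ref{S_1_Rmk_Def}. The matching $\R^{n}$-model $\alpha_{q}$ is supplied by the non-degenerate $\Z_{2}$-harmonic $1$-form constructed in \cite{Donaldson25B,Yan25}.

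For the general statement, I would construct $(g,\alpha)$ in two steps. First, fix a Morse function $h:M\to\R$; because $b^{1}(M)\geq 1$ forces $m_{1}(h)\geq b_{1}(M)\geq 1$, there is at least one index-$1$ critical point $p$ of $h$. Given any closed $1$-form $\beta_{0}$ representing a non-trivial class in $H^{1}(M;\R)$, the closed $1$-form $\beta_{t}=dh+t\beta_{0}$ lies in the non-trivial class $t[\beta_{0}]$ for $t\neq 0$, and for all sufficiently small $t>0$ its zero near $p$ is of Morse index $1$ by local persistence of Morse-type zeros. Second, apply Calabi's intrinsic method \cite{Calabi69} (or the modification alluded to in Section \ref{S_5_1}) to find a metric $g$ in which $\beta_{t}$ is harmonic, and then Chen's transversality \cite{Chen25} to perturb $(g,[\beta_{t}])$ by an arbitrarily small amount so that all zeros of the new harmonic representative $\alpha$ are regular; the targeted index-$1$ zero survives any sufficiently small perturbation. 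The $\R^{n}$-model of \cite{Donaldson25B,Yan25} at the resulting degree-$2$ harmonic polynomial $P_{2}$ from \eqref{S_1_Eqn_Taylor} then verifies the matching hypotheses of Theorem \ref{S_1_Thm_main}, producing the family $\alpha_{\epsilon}$.

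For the three-dimensional statement, Honda's theorem \cite{Honda04} supplies, for a generic metric, a Morse-type harmonic representative of any non-trivial class in $H^{1}(M;\R)$, whose zero set is non-empty precisely when $M$ does not fiber over $S^{1}$. Because a non-degenerate degree-$2$ harmonic polynomial on $\R^{3}$ has Morse index either $1$ or $2$ and these are identified under the sign ambiguity noted after \eqref{S_1_Eqn_Taylor}, every regular zero is automatically of Morse index $1$, so the gluing proceeds exactly as in the general case with no topological hypothesis beyond $M$ not fibering over $S^{1}$.

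The hardest step is carefully combining Calabi's intrinsic method, which requires an intrinsic harmonicity condition on the closed $1$-form $\beta_{t}$, with Chen's subsequent transversality perturbation, which must be kept small enough to preserve the designated index-$1$ zero produced in the first step. The existence of the $\R^{n}$-model with prescribed quadratic asymptotic is taken as a black box from \cite{Donaldson25B,Yan25}, and it is precisely this input that restricts attention to Morse index $1$ zeros in the conclusion.
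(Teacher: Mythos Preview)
Your argument for the three-dimensional statement is correct and matches the paper's (see Remark~\ref{S_5_RMK_Existence}): Honda's transversality gives a Morse-type harmonic representative for a generic metric, non-fibering over $S^{1}$ forces a non-empty zero set, and in dimension $3$ every non-degenerate harmonic quadric has index $1$ or $2$, which coincide up to the sign ambiguity.

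For the general statement, however, there is a genuine gap. Any Morse function $h$ on a compact manifold has at least one local minimum and one local maximum, so $dh$ has zeros of index $0$ and index $n$. For small $t$ your form $\beta_{t}=dh+t\beta_{0}$ retains nearby non-degenerate zeros of the same indices. But a closed $1$-form with an index-$0$ (or index-$n$) zero has a local extremum of its primitive and therefore \emph{cannot} be harmonic for any Riemannian metric, by the maximum principle. Calabi's intrinsic characterisation in \cite{Calabi69} requires precisely that no such zeros occur (together with a transitivity condition); your $\beta_{t}$ fails this, so the step ``find a metric $g$ in which $\beta_{t}$ is harmonic'' breaks down. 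Chen's transversality \cite{Chen25} does not help here, since it perturbs an already harmonic (or $\Z_{2}$-harmonic) form.

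The paper's route around this obstruction is the birth lemma, Proposition~\ref{S_5_Lemma_Calabi}. One starts from an \emph{arbitrary} harmonic $1$-form $\alpha$ on $(M,g_{0})$ (which exists since $b^{1}>0$), chooses a ball $U'$ on which $\alpha$ is nowhere vanishing, and inside $U'$ simultaneously perturbs the metric and the form to $(g',\beta)$ so that $\beta$ is harmonic for $g'$, agrees with $\alpha$ near $\partial U'$, and has exactly two regular zeros of indices $r$ and $r+1$ for a chosen $0<r<n-1$. Taking $r=1$ produces the required index-$1$ regular zero without ever introducing index-$0$ or index-$n$ zeros; then Theorem~\ref{S_1_Thm_main} applies directly with the $\R^{n}$-model of Proposition~\ref{S_3_Prop_Construction}. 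The key difference from your proposal is that the metric and the form are modified \emph{together and locally}, rather than first fixing a closed form with uncontrolled zero indices and then seeking a compatible metric.
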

	Geometrically, the branched covers $\widetilde{M}_{\epsilon}$ can be obtained as follows. Let $\mathcal{R}$ be the subset of regular zeros of the harmonic $1$-form $\alpha$ with Morse index equal to $1$ or $n-1$. Define
		\begin{equation*}
			\widetilde{M}_{0}=M\cup_{\mathcal{R}} \overline{M}
		\end{equation*}
		be a union of two copies of $M$, obtained by identifying each point in $\mathcal{R}\subset M$ with the corresponding point in $\mathcal{R}\subset \overline{M}$. Now $\widetilde{M}_{0}$ is a compact conifold with conical singularities at $p\in \mathcal{R}$ which are modeled on
		\begin{equation*}
			\mathcal{C}_{p}:=(S^{n-1}\sqcup S^{n-1})\times \R^{+}\cup \{p\}.
		\end{equation*}
	The double branched cover relating to the $\Z_{2}$-harmonic $1$-forms $\alpha_{\epsilon}$ in Corollary \ref{S_1_Cor_existence} is given by replacing $\mathcal{C}_{p}$ with a cylinder $S^{n-1}\times \R$. Roughly speaking, the branched covering map from the cylinder to $\R^{n}$ is locally induced by a family of Lawlor's necks, which will be given explicitly in Equation \ref{S_3_Eqn_Graph}.\\
		
	We now provide several items and background for studying this problem.
		\subsubsection*{Resolving singular special Lagrangian.}
			According to the work of He \cite{SqiHe23} , the $\Z_{2}$-harmonic $1$-forms can be identified as the infinitesimal deformations of branched-covers of smooth special Lagrangians in a compact Calabi-Yau manifold. Our result provides a linear version of resolving singular branched-cover special Lagrangians, with singularities modeled on the intersection of two real planes in $\CC^{n}$.
		\subsubsection*{Resolving singularities in $G_{2}$-manifolds.}
			Suppose $(X,\varphi)$ is a torsion-free $G_{2}$-manifold admitting a $G_{2}$-involution $\tau$, in the sense that $\varphi$ is invariant under $\tau$. Suppose further that the fixed point set $M$ of $\tau$ is an associative submanifold. A result by Joyce and Karigiannis \cite{Joyce21} states that if there is a nowhere vanishing harmonic $1$-form $\alpha$ on the $M$, then there exists a resolution of the $G_{2}$ orbifold $X/\langle \tau \rangle$ by gluing in a family of $T^{*}S^{2}$ to resolve $\CC^{2}/\Z_{2}$ singularities in the normal direction of $M$ in $X/\langle \tau\rangle$. However, the gluing ansatz in \cite{Joyce21} does not work when $\alpha$ admits zeros. Instead, when the harmonic $1$-form $\alpha$ only admits regular zeros, Theorem \ref{S_1_Thm_main} implies that there exists a family of $\Z_{2}$-harmonic $1$-forms $\alpha_{\epsilon}$ that converge to $\alpha$ and admit only branching zeros. For each $\epsilon$, near the branching set on the associative in the orbifold, one should instead glue in a nonstandard Calabi-Yau metric on $\CC^{3}$ constructed by Li \cite{Li17} to resolve the singularities. It should be noted here that $g$ remaining invariant is crucial for this potential application. For more details, see Section \ref{S_5_2}.
		\subsubsection*{Branched maximal sections}
				The author's main motivation for studying this problem comes from a non-linear version developed by Donaldson in \cite{Donaldson17A}, involving the branched maximal surface equation and the adiabatic limit of Kovalev-Lefchetz coassociative fibration.
			
				Let $\R^{3,19}=H^{2}(K3,\R)$ and let $\Lambda =H^{2}(K3,\Z)$ be the integral lattice.  Also let $\Sigma$ be a smooth link in a compact $3$-manifold $B$ and let $\Gamma$ be the affine extension
					\begin{equation*}
						0\to \R^{3,19}\to \Gamma\to O(3,19)\to 0
					\end{equation*}
					of the orthonormal group with respect to the indefinite metric. Suppose there is a homomorphism
					\begin{equation*}
					\chi: \pi_{1}(B\setminus \Sigma)\to \Gamma,
					\end{equation*}
					such that every small loop $\gamma$ around $L$ maps to a reflection
					\begin{equation*}
						\chi([\gamma])(v)=v+\IP{v}{\delta}\delta.
					\end{equation*}
				Here $\delta\in \Lambda$, $\IP{\delta}{\delta}=-2$ is a $(-2)$-class. Now, such a representation defines an affine bundle $E_{\chi}$ over $B\setminus \Sigma$. We consider a section $h$ of $E_{\chi}$, which is locally given by maps into $\R^{3,19}$, with an image solving the maximal submanifold equation. Near a tubular neighborhood $U$ of each component of $\Sigma$, the orbifold bundle can be decomposed into
					\begin{equation*}
						E_{\chi}\big|_{U\setminus \Sigma}\simeq \R^{3,18}\oplus L\cdot \delta,
					\end{equation*}
				where $\delta$ is the $(-2)$-class corresponding to the small loop $\gamma$, and $L$ is a flat real line with monodromy $-1$ around the $\gamma$.
					
				Similarly to the definition of non-degenerate $\Z_{2}$-harmonic, we call the maximal section $h$ a \textbf{branched maximal section} if
					\begin{equation*}
						h=(t,\zeta,w, \RRe{B(t)\zeta^{\frac{3}{2}}})+o(|\zeta|^{\frac{3}{2}}),
					\end{equation*}
				near $\Sigma$. Here, $t$ is the coordinate on $\Sigma$ and $\zeta$ is the complex coordinate in the normal direction. We call $h$ \textbf{avoids $(-2)$-classes} if away from the branching set $\Sigma$ there is no $(-2)$-class orthogonal to the image of the derivatives of $h$. In \cite{Donaldson17A}, Donaldson conjectured that if there is a branched maximal section $h$ that avoids $(-2)$-classes, then there is a compact $7$-manifold $X$ admitting a differentiable Kovalev-Lefschetz fibration $\pi: X\to B$ (Definition 1, page 16 \cite{Donaldson17A}) with generic fibers diffeomorphic to a $K3$ surface, and there is a one parameter family of torsion-free $G_{2}$-structures $\varphi_{t}, t>t_{0}\gg 1$ on $X$ such that the fibers of $\pi$ are coassociative. The singular locus of the fibration coincides with $\Sigma$. Moreover, the normalized hyper-symplectic structures on the $K3$ fibers will converge to a family of hyper-K\"ahler structures induced by the differential of the branched maximal section $h$ at each $b\in B$.

				Constructing a solution to the branched maximal surface equation over a compact base space is challenging, particularly under the additional requirement that it avoids $(-2)$-class. Our starting point is to develop a similar gluing technique in the setting of branched maximal sections when it does not avoid $(-2)$-classes. The easiest situation is when the image of the derivatives intersects transversally with the set of planes that are orthogonal to a $(-2)$-class. More precisely, let $Gr_{3}^{+}(\R^{3,19})$ be the Grassmannian for the positive $3$-plane, and $S_{\delta}=\{H\in  Gr^{+}_{3}\big| H\perp \delta\}$. Denote	
					\begin{equation*}
						S=\bigcup_{\delta\in \Lambda,\IP{\delta}{\delta}=-2} S_{\delta}
					\end{equation*}		
				as the union, then the complement $Gr_{3}^{+}\setminus S$ is the moduli space of polarized hyper-K\"ahler structures on $K3$ surfaces. The set $S$ is then the orbifold compactification of a sequence of non-collapsing hyper-K\"ahler structures, and the main strata $S^{\circ} \subset S$ correspond to those orbifolds admitting only $1$-singularities modeled on $\CC^{2}/\Z_{2}$. The set $S^{\circ}$ is a smooth submanifold of codimension $3$ in $Gr_{3}^{+}$. In the case when $(Dh)_{*}T_{p}B$ intersects $S^{\circ}$ transversally at $p$, we find a graphical coordinate for the affine bundle such that near $p$
					\begin{equation*}
						E_{\chi}\simeq \R^{3}\oplus \R^{18}\oplus \R \cdot \delta,
					\end{equation*}
				and $h$ can be decomposed into
					\begin{equation*}
						h(x)=(x, u(x),f(x)),
					\end{equation*}
				with $u(p), Du(p)=0$. The transversality condition yields that the Taylor expansion for $f$ at $p$ begins with a non-degenerate quadric $P_{2}$. The maximal submanifold equation in graphical coordinates \cite{Li22} implies that $P_{2}(x)$ is again a harmonic polynomial of degree $2$. The non-degenerate $\Z_{2}$-harmonic $1$-forms on $\R^{3}$ \cite{Yan25} can be used to turn the transverse intersection into a small branching set in a manner similar to Theorem \ref{S_1_Thm_main}.

		Our proof of Theorem \ref{S_1_Thm_main} is an application of a weighted version of Nash-Moser theory. On the geometric side, we invoke Donaldson's framework of deformation of multivalued harmonic functions, adapting it to a weighted setting.	 On the analysis side, instead of working within Hamilton's framework, we directly construct a family of smoothing operators and derive the corresponding tame estimates in weighted H\"older space.	\\
		
		Moreover, our application of Nash-Moser theory requires more than just the invertibility of the approximate inverse in a neighborhood of the approximate solution. A key step is to establish a uniform estimate for both the error terms and the inverse operator. this involves the mapping properties of the Laplacian operators on the real line bundle $L_{q}$ over $\R^{n}\setminus \Sigma_{q}$ and on the $L$ over $M\setminus \Sigma$. In our application of Nash-Moser theory, we make a slight modification to the arguments in Appendix A of \cite{Donaldson25A} to obtain the desired result.
				
	\subsection*{Ackownledgement}
		The author is grateful to his Ph.D advisor, Prof. LeBrun, as well as to Prof. Donaldson and Prof. He for their valuable discussions. He also wishes to thank Prof. Karigiannis and Jiahuang Chen for many fruitful discussions and helpful suggestions in Section \ref{S_5}. This work is partially supported by the Simon foundation.

    \subsection*{Notation}
        In this paper, we use $O(|x|^{k})$ to denote the remaining terms for Taylor's expansion, and $O'(|x|^{k})$ to denote the remaining terms such that $|\nabla^{p}f|\leq C_{p}|x|^{k-p}$.

\section{Preliminary}\label{S_2}
	In this section, we review Donaldson's framework on the deformation of multivalued harmonic $1$-forms. In particular, we will focus on the Schauder's type estimate for the singular Laplacian equation with a codimension-$2$ singularity.	
	\subsection{Analysis on flat model}
		In this subsection, we work on the product $\R^{n}=\R^{n-2}\times \CC$ and represent a point in $\R^{n}$ by $(t,z)$ and write $z=re^{\II \theta}$. Let $E$ be a vector bundle over $\R^{n-2}\times \CC^{*}$ with monodromy $-1$ around $\R^{n-2}\times\{0\}$. From the perspective of branched covers, if we take $(t,w)\mapsto (t, w^{2})$ as a branch cover, a section in $E$ can be thought of as an odd section with respect to the involution $(t,w)\mapsto (t,-w)$. Let $\rho$ be a smooth section in $E$ with compact support. The main focus of this section is to find the inverse operator of the Laplacian function
			\begin{equation}\label{S_2_Eqn_Lap}
				\Delta f=\rho.
			\end{equation}
			Here, we use the analyst's convention for the Laplacian. Let $L^{2}, L^{2}_{1}$ be the Hilbert space obtained as the completion of compactly-supported smooth sections under the $L^{2}$ and $L^{2}_{1}$ norms. Using the Reisz representation and Sobolev's inequality, for any compactly-supported $L^{2}$ section $\rho$, we can find a unique weak solution $f$ in $L^{2}_{1}$, and we define $f=\underline{G}(\rho)$.\\
			
		It is a classical result that the inverse can be represented by a singular integral of the Green function
			\begin{equation*}
				\underline{G}(\rho)(p)=\int_{\R^{n}}G(p,p')\rho(p')\ud p'.
			\end{equation*}
			More precisely, $G(p,p')$ is a singular section on $\pi_{1}^{*}E\otimes \pi_{2}^{*}E$ on $(\R^{n-2}\times \CC^{*})^{2}$; here $\pi_{j}, j=1,2$ are the projections to the $j$-th factor of $\R^{n-2}\times \CC$. The properties of $G$ can be summarized as follows.
				\begin{enumerate}
					\item $G(p,p')$ is symmetric with respect to $p,p'$ and is invariant under translation in the $t$-direction and rotation $SO(n-2)\times S^{1}$.
					\item $G$ is homogeneous of degree $2-n$
							\begin{equation*}
								G(\lambda p, \lambda p')=\lambda^{2-n}G(p,p'), \lambda >0.
							\end{equation*}
					\item $G$ is smooth on the complement of the diagonal in $(\R^{n-2}\times \CC^{*})^{2}$ and has pole singularities along the diagonal.
					\item If $p\in B_{1}$ and $p\in \R^{n}\setminus \overline{B_{1}}$, then there is a polyhomogeneous expansion for the Green's function
						\begin{equation*}
							G(p,p')=\RRe{\sum_{k,l\geq 0}a_{l,k}(t,p')e^{\II(l+1/2)\theta}r^{l+2k+1/2}}.
						\end{equation*}
						Moreover, from the first and second properties, we conclude that $a_{l,k}(t,p')=h_{l,k}(t-t', r',\theta')$ such that
							\begin{equation*}
								h_{l,k}(\lambda \tau, \lambda r')=\lambda^{\frac{3}{2}-n-l-2k}h_{l,k}(\tau, r').
							\end{equation*}
				\end{enumerate}

		From the behavior of the Green's function, one can derive Schauder's estimate for the Laplacian equation
			\begin{equation*}
				\Delta f=\rho,
			\end{equation*}
			and study the asymptotic behavior of $f$ near the branching set $\R^{n-2}\times \{0\}$. To begin, fix a constant $\alpha\in (0,\frac{1}{2})$, we define the H\"older semi-norm of sections on $E$ to be
			\begin{equation*}
				\|s\|_{,\alpha}=\sup \frac{|s(p)-s(p')|}{|p-p'|^{\alpha}},
			\end{equation*}
			where the supremum is taken over the pair $|z-z'|<\frac{1}{2}\min{(|z|,|z'|)}$, and $s(p),s(p')$ is compared using parallel transport along the line joining $p$ and $p'$. Parallel transport around the branching set yields
			\begin{equation*}
				|s(t,z)|\leq C \|s\|_{,\alpha}|z|^{\alpha}.
			\end{equation*}

		Let $\underline{\mathcal{T}_{k}}$ be the set of differential operators, which are products of $k$ vector fields of the form
			\begin{equation*}
				\frac{\p}{\p t_{i}}, r\frac{\p}{\p r}, \frac{\p}{\p \theta}.
			\end{equation*}
		In what follows, we introduce two different kinds of norms. The first are the H\"older norms, which we will ultimately use to define operators concerning the asymptotic behavior of the harmonic section at the branching set. While the second one is weighted $C^{k}$ norms, which will be used to establish tame estimates, addressing the fact that there is no interpolation formula for H\"older norms with a fixed H\"older exponent.

		\begin{definition}\label{S_2_Defn_DHolder}
			We define the H\"older $\mathcal{D}^{k,\alpha}$ norms of a section to be
			\begin{equation*}
				\|s\|_{\mathcal{D}^{k,\alpha}}:=\max_{0\leq j\leq k, D\in \underline{\mathcal{T}_{j}}}\|D s\|_{,\alpha}.
			\end{equation*}
		\end{definition}
		Let $\mathcal{T}$ be the set of vector fields on $\R^{n}$ that are tangent to the branching set $\R^{n-2}\times \{0\}$. We define $\mathcal{T}_{k}$ to be sums of products of at most $k$ elements in $\mathcal{T}$. It is easy to check that $\underline{\mathcal{T}_{k}}$ is a subset of $\mathcal{T}_{k}$ and that, over a unit ball, we have the following estimate
			\begin{equation*}
				\|Ds\|_{,\alpha}\leq C_{D}\|s\|_{,\alpha}.
			\end{equation*}
		In order to apply the tame estimate, we define another graded norm on the space $\mathcal{D}^{\infty,\alpha}$.
		\begin{definition}
			We define $C^{k}_{\alpha}$ norm if a section is to be
				\begin{equation*}
					\|s\|_{C^{k}_{\alpha}}:=\max_{0\leq j\leq k, D\in \underline{\mathcal{T}_{j}}}\||z|^{-\alpha}Ds\|_{C^{0}}.
				\end{equation*}
		\end{definition}
		Moreover, we find a sequence of constant $C_{k}$ such that
		\begin{equation*}
			\frac{1}{C_{k}}\|s\|_{C^{k}_{\alpha}}\leq \|s\|_{\mathcal{D}^{k,\alpha}}\leq C_{k}\|s\|_{C^{k+1}_{\alpha}}.
		\end{equation*}
			
		For the H\"older norms $\mathcal{D}^{k,\alpha}$, an analog of Schauder's estimate is as follows.
			\begin{prop}\label{S_2_Prop_Schauder}
				Suppose $\rho$ has compact support in $B_{1}$, then
					\begin{equation*}
						\|\underline{G}(\rho)\|_{\mathcal{D}^{2+k,\alpha}(B_{1})}\leq C_{k}\|\rho\|_{\mathcal{D}^{k,\alpha}}.
					\end{equation*}
			\end{prop}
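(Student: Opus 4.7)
The plan is to prove Proposition \ref{S_2_Prop_Schauder} by combining a dyadic scaling argument around the branching set $\Sigma=\R^{n-2}\times\{0\}$, standard interior Schauder estimates on regions where the bundle $E$ is trivial, and commutator identities with $\Delta$ to reduce higher-order derivatives to the base case $k=0$.

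For any $p=(t,z)\in B_{1}\setminus\Sigma$ with $r=|z|$, the ball $B_{r/2}(p)$ is disjoint from $\Sigma$ and simply connected, so $E$ is canonically trivial on it and $\underline{G}(\rho)$ restricts to a classical solution of the scalar Poisson equation. Rescaling by a factor of $r$ sends this ball to unit size; in the rescaled picture the vector fields $r\p/\p r$ and $\p/\p\theta$ become bounded linear combinations of the Cartesian derivatives $\p/\p\tilde{x}$, $\p/\p\tilde{y}$. The classical interior Schauder estimate at unit scale, transferred back, then yields on each dyadic annulus $A_{j}=\{2^{-j-1}\leq|z|\leq 2^{-j+1}\}$ a uniform bound
\begin{equation*}
[Df]_{\alpha;A_{j}}\leq C\bigl([\rho]_{\alpha;A'_{j}}+\|f\|_{C^{0};A'_{j}}\bigr)
\end{equation*}
for every $D\in\underline{\mathcal{T}_{2}}$ built from $r\p/\p r$ and $\p/\p\theta$. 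Derivatives in the $\p/\p t_{i}$ directions are handled separately using $\p_{t_{i}}\underline{G}=\underline{G}\,\p_{t_{i}}$, which follows from $t$-translation invariance of $G$.

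The $C^{0}$ error on the right-hand side is controlled via the Green's function representation together with the pointwise bound $|\rho(p')|\leq C\|\rho\|_{,\alpha}|z'|^{\alpha}$ (a consequence of parallel transport around $\Sigma$) and the polyhomogeneous expansion of $G$ recalled above. The leading term of that expansion forces $\underline{G}(\rho)\sim r^{1/2}$ near $\Sigma$, keeps $(r\p/\p r)f$ and $(\p/\p\theta)f$ bounded up to the branching set, and ensures that the dyadic sum of annular estimates converges, producing the base case $\|\underline{G}(\rho)\|_{\mathcal{D}^{2,\alpha}}\leq C\|\rho\|_{,\alpha}$. The higher-order case $k\geq 1$ then follows by induction, using the commutator identities $[\Delta,\p/\p t_{i}]=0$, $[\Delta,\p/\p\theta]=0$, and $[\Delta,r\p/\p r]=2(\Delta-\Delta_{t})$, which allow any $D\in\underline{\mathcal{T}_{k}}$ to be commuted past $\Delta$ at the cost of lower-order terms that themselves lie in the range of $\underline{G}$.

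The principal difficulty I anticipate is the near-$\Sigma$ bookkeeping: the $\mathcal{D}^{k,\alpha}$-norm mixes the scale-$1$ derivative $\p/\p t_{i}$ with the scale-$r$ derivatives $r\p/\p r$ and $\p/\p\theta$, so the dyadic rescaling and the polyhomogeneous expansion of $G$ must be matched carefully to yield a single uniform constant $C_{k}$ in the final estimate. This is in effect a weighted version of edge-type Schauder theory, and may alternatively be developed by passing to the double branched cover adapted to the $\Z_{2}$-monodromy of $E$ and invoking classical Calder\'on--Zygmund theory for the resulting singular integrals.
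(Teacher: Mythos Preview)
Your overall architecture---establish the base case $k=0$ and then induct via the commutator identities $[\Delta,\partial_{t_i}]=0$, $[\Delta,\partial_\theta]=0$, $[\Delta,r\partial_r]=2\Delta_z$---is sound, and the dyadic interior Schauder argument you sketch does give uniform control of the purely ``radial'' second-order operators $(r\partial_r)^2$, $(r\partial_r)\partial_\theta$, $\partial_\theta^2$ acting on $f=\underline{G}(\rho)$, once one feeds in the $C^0$ bound $|f|\lesssim\|\rho\|_{,\alpha}\,r^{1/2}$ coming from the leading term of the polyhomogeneous expansion (and uses $\alpha<\tfrac12$). The gap is in the tangential part of the base case. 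For $D=\partial_{t_i}\partial_{t_j}$, rescaling $B_{r}(p)$ to unit size sends $\partial_{t_i}$ to $r^{-1}\partial_{\tilde t_i}$, so the interior Schauder estimate transfers back as
\[
[\partial_{t_i}\partial_{t_j}f]_{\alpha;B_{r/2}(p)}
\;\le\;C\,r^{-2-\alpha}\bigl(r^{2+\alpha}\|\rho\|_{,\alpha}+\|f\|_{C^0;B_r(p)}\bigr)
\;\le\;C\,\|\rho\|_{,\alpha}\bigl(1+r^{-3/2-\alpha}\bigr),
\]
which blows up as $r\to 0$. Your proposed remedy $\partial_{t_i}\underline{G}=\underline{G}\,\partial_{t_i}$ does not close this: at $k=0$ you only assume $\rho\in\mathcal{D}^{0,\alpha}$, so $\partial_{t_i}\rho$ is not available, and the identity is precisely what drives the induction $k\mapsto k+1$ rather than the base step. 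The same objection applies to the mixed operators $\partial_{t_i}(r\partial_r)$ and $\partial_{t_i}\partial_\theta$.

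What is actually needed for the base case is the finer structure of the kernel $G(p,p')$ itself, which is why the paper simply refers to Donaldson's analysis. The point is that $\partial_{t_i}\partial_{t_j}G(p,p')$ is a singular integral kernel of Calder\'on--Zygmund type \emph{adapted to the edge geometry}: one must control not only the $|p-p'|^{-n}$ diagonal singularity but simultaneously the $r^{1/2}$, $(r')^{1/2}$ behaviour near $\Sigma$, and it is this joint estimate that yields boundedness on $\mathcal{D}^{0,\alpha}$. An equivalent route is to Fourier transform in $t$, reducing to the family $\Delta_{\C}-|\xi|^2$ and exploiting the natural scaling in $|\xi|r$; the purely local dyadic argument cannot see this. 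Your suggested alternative via the double branched cover does not sidestep the issue either, since the pulled-back metric $|dt|^2+4|w|^2|dw|^2$ is a cone of angle $4\pi$ along $\{w=0\}$, so ``classical'' Calder\'on--Zygmund still does not apply directly. Once the base case is obtained by one of these kernel methods, your commutator induction for $k\ge 1$ goes through as written.
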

			\begin{proof}
				This follows directly from analyzing the integral kernel $G(p,p')$. For more details, see \cite{Donaldson17}
			\end{proof}	
		
		We now define two operators that describe the asymptotic behavior of the solution to the Laplacian equation \ref{S_2_Eqn_Lap}.
			\begin{definition}\label{S_2_Defn_Asym}
				We define two integral operators that take compactly supported sections of $E$ on $\R^{n}$ to functions on $\R^{n-2}\times \{0\}$
					\begin{equation}
						\begin{split}
							&A(\rho)(t)=\int_{\R^{n}}h_{0,0}(t-t',r',\theta')\rho(p')\ud  p'\\
							&B(\rho)(t)=\int_{\R^{n}}h_{1,0}(t-t',r'.\theta')\rho(p')\ud p'.
						\end{split}
					\end{equation}
			\end{definition}
		
		We use the usual H\"older space $C^{k,\beta}$ on the branching set $\R^{n-2}\times \{0\}$; thus, we have the following estimates.
			\begin{proposition}\label{S_2_Prop_Asym}
				If $\rho$ has compact support in $B_{1}$, then
					\begin{equation*}
						\|A(\rho)\|_{C^{k+1,\alpha+\frac{1}{2}}}, \|B(\rho)\|_{C^{k,\alpha+\frac{1}{2}}}\leq C_{k}\|\rho\|_{\mathcal{D}^{k,\alpha}}.
					\end{equation*}
				Moreover, the section $\underline{G}(\rho)$ satisfies
					\begin{equation*}
						\underline{G}\rho=\RRe{A(t)z^{\frac{1}{2}}+B(t)z^{\frac{3}{2}}}+Err(t,z),
					\end{equation*}
					with $|Err(t,z)|=o(r^{\frac{3}{2}}\|\rho\|_{,\alpha})$.
			\end{proposition}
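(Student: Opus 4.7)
Plan: The proof has two components: (i) deriving the expansion $\underline{G}\rho = \RRe{A(t)z^{1/2} + B(t)z^{3/2}} + Err$ with $|Err| \leq C r^{5/2}\|\rho\|_{,\alpha}$, and (ii) proving the $C^{k+1,\alpha+1/2}$ and $C^{k,\alpha+1/2}$ bounds on $A(\rho)$ and $B(\rho)$. Both rely on the Schauder-type estimate from Proposition~\ref{S_2_Prop_Schauder} combined with the polyhomogeneous expansion of the Green function listed in the bullet points above.

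For part (i), I would fix $p_{0} = (t_{0}, z_{0})$ near the branching set with $r = |z_{0}|$ small and rewrite the error as
\begin{equation*}
Err(p_{0}) = \int_{\R^{n}} K(p_{0}, p')\,\rho(p')\,dp',
\end{equation*}
where $K(p_{0}, p') = G(p_{0}, p') - \RRe{h_{0,0}(t_{0}-t', r', \theta') z_{0}^{1/2}} - \RRe{h_{1,0}(t_{0}-t', r', \theta') z_{0}^{3/2}}$. The integration domain splits into a far piece $|p' - (t_{0}, 0)| \geq 2r$ and a near piece $|p' - (t_{0}, 0)| < 2r$. On the far piece the polyhomogeneous expansion of $G$ converges, and $K$ is dominated by the next omitted terms (the $(l,k) = (2,0)$ and $(0,1)$ pieces), which carry an overall factor of $r^{5/2}$ against a kernel homogeneous of order $-1/2 - n$; integrating this against $\rho$ using the monodromy decay $|\rho(p')| \leq C\|\rho\|_{,\alpha} r'^{\alpha}$ yields the claimed bound. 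On the near piece, I would rescale by $\lambda = r^{-1}$ via $G(\lambda p, \lambda p') = \lambda^{2-n} G(p, p')$ to reduce to a unit-scale problem, where the rescaled source is controlled in $C^{0,\alpha}$ and the leading $h_{0,0}, h_{1,0}$ contributions of $G$ are explicit; the residue, after undoing the rescaling, supplies the near-region contribution to $Err$.

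For part (ii), I would apply Proposition~\ref{S_2_Prop_Schauder} to obtain $\|\underline{G}\rho\|_{\mathcal{D}^{2+k,\alpha}} \leq C_{k} \|\rho\|_{\mathcal{D}^{k,\alpha}}$, then extract $A(t)$ and $B(t)$ as the coefficients of $e^{i\theta/2}$ and $e^{3i\theta/2}$ in a Fourier-type decomposition of $\underline{G}\rho$ on a small $\theta$-circle. The $t$-derivatives in $\underline{\mathcal{T}_{j}}$ that appear in the $\mathcal{D}^{2+k,\alpha}$ norm pass to derivatives of $A$ and $B$ on the branching set, while the operators $r\p_{r}, \p_{\theta}$ act diagonally on $z^{1/2}$ and $z^{3/2}$. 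The gain of $1/2$ in H\"older exponent follows from the scale-invariant constraint $|z-z'| < \frac{1}{2}\min(|z|,|z'|)$ in the definition of $\|\cdot\|_{,\alpha}$: for a section of the form $\RRe{A(t)z^{1/2}}$, a direct computation splitting $|A(t)z^{1/2} - A(t')z'^{1/2}| \leq |A(t)-A(t')||z|^{1/2} + |A(t')||z^{1/2}-z'^{1/2}|$ and using the pair constraint shows $\|\RRe{A(t)z^{1/2}}\|_{,\alpha}$ is controlled by $\|A\|_{C^{0,\alpha+1/2}}$, and similarly $\|\RRe{B(t)z^{3/2}}\|_{,\alpha}$ by $\|B\|_{C^{0,\alpha+1/2}}$.

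The main technical obstacle is the sharp $r^{5/2}$ bound in the near region. Naive estimates using the Newtonian pointwise bound $|G(p,p')| \lesssim |p-p'|^{2-n}$ together with $|\rho(p')| \lesssim \|\rho\|_{,\alpha} r'^{\alpha}$ give only $r^{2+\alpha}$, which is strictly weaker than $r^{5/2}$ when $\alpha < 1/2$. To improve, one must exploit a cancellation on the rescaled unit-scale picture: the leading asymptotic terms $\RRe{h_{0,0} z^{1/2}} + \RRe{h_{1,0} z^{3/2}}$ must absorb the full $O(r^{1/2})$ and $O(r^{3/2})$ contributions of $G$, leaving only the subleading polyhomogeneous modes of order $r^{5/2}$. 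Carrying out this matching rigorously in the near region, where the polyhomogeneous series does not converge term-by-term, is the delicate step; it requires treating $K(p_{0}, p')$ as a single homogeneous kernel and using its pointwise structure on the unit scale rather than splitting it into three pieces.
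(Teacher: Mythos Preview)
The paper does not actually prove this proposition: its proof is a one-line deferral, ``This is also proven by estimating the integral kernel $G(p,p')$. For more details, see \cite{Donaldson17}.'' So your outline is already far more detailed than what appears here, and your strategy for part (i) --- splitting into near and far regions, using the polyhomogeneous expansion in the far region and homogeneity-based rescaling near the branching set, and identifying the need for cancellation to upgrade $r^{2+\alpha}$ to $r^{5/2}$ --- is a reasonable sketch of what the direct kernel analysis in \cite{Donaldson17} presumably does.

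There is, however, a gap in your part (ii). The operators $A(\rho)$ and $B(\rho)$ are \emph{defined} in Definition~\ref{S_2_Defn_Asym} as explicit integral operators with kernels $h_{0,0}(t-t',r',\theta')$ and $h_{1,0}(t-t',r',\theta')$, whose homogeneity degrees are $\frac{3}{2}-n$ and $\frac{1}{2}-n$ respectively. The route the paper points to is to bound $\|A(\rho)\|_{C^{k+1,\alpha+1/2}}$ and $\|B(\rho)\|_{C^{k,\alpha+1/2}}$ \emph{directly} from these kernels, using their half-integer homogeneity together with the decay $|\rho(p')|\lesssim \|\rho\|_{,\alpha}\,r'^{\alpha}$; the $\tfrac{1}{2}$ gain in H\"older exponent comes from the half-integer order of the kernel, not from the structure of the section $\RRe{A(t)z^{1/2}}$. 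Your approach instead passes through Proposition~\ref{S_2_Prop_Schauder} and then tries to extract $A$ and $B$ from $\underline{G}\rho$, but the inequality you write at the end goes the wrong way: showing that $\|\RRe{A(t)z^{1/2}}\|_{,\alpha}$ is controlled by $\|A\|_{C^{0,\alpha+1/2}}$ does not give you a bound on $\|A\|_{C^{0,\alpha+1/2}}$. You would need the reverse inequality, i.e.\ that the map $A\mapsto \RRe{A(t)z^{1/2}}$ is bounded below from $C^{0,\alpha+1/2}$ to $\mathcal{D}^{0,\alpha}$, and you would also need to separate the $A$-contribution from the $B$-contribution and the remainder $Err$ in the $\mathcal{D}$-norm, neither of which you address. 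The direct kernel estimate avoids this circularity entirely.
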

			\begin{proof}
				This is also proven by estimating the integral kernel $G(p,p')$. For more details, see \cite{Donaldson17}.
			\end{proof}
		
	\subsection{Local Schauder-type estimate}
		In this subsection, we extend the previous results to a class of second order operators with variable coefficients. We view this as a local model for a tubular neighborhood of a codimension-$2$ submanifold. We say a differential operator on $\R^{n}$ is \textbf{admissible} if:
			\begin{enumerate}\label{S_2_Defn_Adimssible}
				\item $\tilde{\Delta}=\Delta_{\R^{n}}+\mathcal{L}$, where $\mathcal{L} \in \mathcal{T}_{2}$. Recall that $\mathcal{T}_{2}$ is the set of sums of products of at most two tangential vector fields.
				\item $\tilde{\Delta}$ is elliptic and
						\begin{equation*}
							\tilde{\Delta}=W^{-1}\Delta_{g}W,
						\end{equation*}
						where $W$ is a smooth positive function, and $\Delta_{g}$ is the Laplacian operator for a smooth Riemannian metric $g$.
				\item $\widetilde{\Delta}=\Delta_{\R^{n}}$ outside a compact set.
			\end{enumerate}
			\begin{remark}
				The first properties will allow us to develop a good Schauder-type estimate, and the second will allow us to use Hilbert space theory, so that for every $\rho \in L^{2}$ that has compact support, there is a unique solution
					\begin{equation*}
						\widetilde{\Delta}f=\rho, f\in L^{2}_{1}.
					\end{equation*}
				Moreover, we will see in the next section that for any smooth Riemannian metric $g$, which equals the Euclidean metric outside a compact set, we can always find a smooth function $W$ making $W^{-1}\Delta_{g} W$ admissible.
			\end{remark}
			
			\begin{lemma}\label{S_2_Lemma_Schauder_1}
				Suppose $\widetilde{\Delta}=\Delta_{\R^{n}}+\mathcal{L}$ is admissible and that the coefficients in $\mathcal{L}$ are sufficiently small in $C^{2,\alpha}$. Then, if $\rho$ has compact support in $B_{1}$ and $\|\rho\|_{,\alpha}$ is bounded, the solution
					\begin{equation*}
						\tilde{\Delta}f=\rho
					\end{equation*}
					satisfies
					\begin{equation*}
						\|\Delta_{\R^{n}} f\|_{,\alpha}\leq C\|\rho\|_{,\alpha},
					\end{equation*}
					and $\Delta_{\R^{n}}f$ has compact support in $B_{1}$. Moreover, given that $\|\mathcal{F}\|_{C^{2,\alpha}}$ is sufficiently small, we have the Schauder-type estimate for higher derivatives
					\begin{equation}\label{S_2_Eqn_Schauder}
						\|\Delta_{\R^{n}}f\|_{\mathcal{D}^{k,\alpha}}\leq C_{k}(1+\|\mathcal{F}\|_{C^{k+2,\alpha}})^{k}\|\rho\|_{\mathcal{D}^{k,\alpha}}
					\end{equation}
			\end{lemma}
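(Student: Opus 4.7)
The plan is to bootstrap from the flat-model Schauder estimate in Proposition \ref{S_2_Prop_Schauder}. First, rewrite the equation $\widetilde{\Delta}f = \rho$ as a Poisson equation with a perturbed source,
\begin{equation*}
\Delta_{\R^{n}} f = \rho - \mathcal{L} f.
\end{equation*}
Since $\widetilde{\Delta} = \Delta_{\R^{n}}$ outside a compact set, the perturbation $\mathcal{L}$ is compactly supported, and the statement allows us to arrange that its support lies inside $B_{1}$; thus $\rho - \mathcal{L} f$ is supported in $B_{1}$, which gives the assertion that $\Delta_{\R^{n}} f$ has compact support in $B_{1}$. The admissibility hypothesis $\widetilde{\Delta} = W^{-1}\Delta_{g}W$ provides a unique $L^{2}_{1}$ weak solution; combined with the decay of $f$ at infinity and the flat Green's function representation, this identifies $f = \underline{G}(\rho - \mathcal{L} f)$. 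Applying Proposition \ref{S_2_Prop_Schauder} then yields the workhorse inequality
\begin{equation*}
\|f\|_{\mathcal{D}^{k+2,\alpha}} \leq C_{k}\,\|\Delta_{\R^{n}} f\|_{\mathcal{D}^{k,\alpha}}.
\end{equation*}

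For the base case $k=0$, we use that $\mathcal{L}$ is a second-order tangential operator with coefficients collectively denoted $\mathcal{F}$, so the standard product rule gives
\begin{equation*}
\|\mathcal{L} f\|_{,\alpha} \leq C\,\|\mathcal{F}\|_{C^{0,\alpha}}\,\|f\|_{\mathcal{D}^{2,\alpha}} \leq C'\,\|\mathcal{F}\|_{C^{2,\alpha}}\,\|\Delta_{\R^{n}} f\|_{,\alpha}.
\end{equation*}
The smallness assumption $C'\|\mathcal{F}\|_{C^{2,\alpha}} \leq \tfrac{1}{2}$ lets us absorb the $\mathcal{L} f$ contribution into the left-hand side of $\|\Delta_{\R^{n}} f\|_{,\alpha} \leq \|\rho\|_{,\alpha} + \|\mathcal{L} f\|_{,\alpha}$, giving $\|\Delta_{\R^{n}} f\|_{,\alpha} \leq 2\|\rho\|_{,\alpha}$.

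For $k \geq 1$ the argument proceeds by induction. A careful application of the Leibniz rule to $\mathcal{L} f$ in the $\mathcal{D}^{k,\alpha}$-norm, exploiting the fact that the generators $\p/\p t_{i}$, $r\,\p/\p r$, and $\p/\p\theta$ of $\underline{\mathcal{T}}$ mutually commute, reduces the estimate to
\begin{equation*}
\|\mathcal{L} f\|_{\mathcal{D}^{k,\alpha}} \leq C\sum_{j=0}^{k}\|\mathcal{F}\|_{C^{j+2,\alpha}}\,\|f\|_{\mathcal{D}^{k-j+2,\alpha}}.
\end{equation*}
The $j=0$ term is absorbed using the smallness condition exactly as in the base case. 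For each $j \geq 1$, the Schauder bound controls $\|f\|_{\mathcal{D}^{k-j+2,\alpha}}$ by $C_{k-j}\|\Delta_{\R^{n}} f\|_{\mathcal{D}^{k-j,\alpha}}$, and by the inductive hypothesis together with the trivial monotonicity $\|\rho\|_{\mathcal{D}^{k-j,\alpha}} \leq \|\rho\|_{\mathcal{D}^{k,\alpha}}$, this in turn is bounded by $C_{k-j}(1+\|\mathcal{F}\|_{C^{k+2,\alpha}})^{k-j}\|\rho\|_{\mathcal{D}^{k,\alpha}}$. Summing over $j$ collapses the expression into the advertised bound $C_{k}(1+\|\mathcal{F}\|_{C^{k+2,\alpha}})^{k}\|\rho\|_{\mathcal{D}^{k,\alpha}}$.

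The main technical obstacle is the bookkeeping of the Leibniz rule in the graded norm $\mathcal{D}^{k,\alpha}$, which mixes the ordinary Euclidean $C^{k+2,\alpha}$-norm on the coefficients $\mathcal{F}$ with the singular $\mathcal{D}^{\cdot,\alpha}$-norm on the section $f$ of the branched bundle $E$. One must verify that iterated $\underline{\mathcal{T}}$-derivatives of the product reassemble so that each derivative landing on $\mathcal{F}$ costs a factor of $\|\mathcal{F}\|_{C^{j+2,\alpha}}$ while each tangential derivative landing on $f$ costs a factor of $\|f\|_{\mathcal{D}^{\cdot,\alpha}}$, with no spurious contributions from the branch locus. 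The commutativity of the $\underline{\mathcal{T}}$-generators and the tangentiality of the vector fields making up $\mathcal{L}$ render this reduction routine, and the smallness of $\|\mathcal{F}\|_{C^{2,\alpha}}$ is essential only to break the feedback loop at the highest-order term $j=0$.
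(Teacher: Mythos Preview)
Your argument is correct and follows the same overall scheme as the paper: rewrite $\widetilde{\Delta}f=\rho$ as $\Delta_{\R^n}f=\rho-\mathcal{L}f$, use the flat Green operator together with Proposition~\ref{S_2_Prop_Schauder}, and close by a smallness/absorption argument plus induction on $k$. The base case is identical in content (the paper phrases it as a Banach contraction on $\sigma=\Delta_{\R^n}f$ solving $\sigma+\mathcal{L}\underline{G}\sigma=\rho$, you phrase it as absorbing $\mathcal{L}f$).

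The only noteworthy difference is in the induction step. The paper commutes a tangential operator $D_k\in\underline{\mathcal{T}_k}$ past $\widetilde{\Delta}$, so that $[\widetilde{\Delta},D_k]f$ involves only $\|f\|_{\mathcal{D}^{k+1,\alpha}}$, which is already controlled by the induction hypothesis; thus the base-case smallness threshold on $\|\mathcal{F}\|_{C^{2,\alpha}}$ suffices uniformly in $k$. Your Leibniz-expansion route absorbs the $j=0$ term $C\|\mathcal{F}\|_{C^{2,\alpha}}\|f\|_{\mathcal{D}^{k+2,\alpha}}\leq CC_k\|\mathcal{F}\|_{C^{2,\alpha}}\|\Delta_{\R^n}f\|_{\mathcal{D}^{k,\alpha}}$, which requires $CC_k\|\mathcal{F}\|_{C^{2,\alpha}}<1$ with the level-$k$ Schauder constant $C_k$; so as written your smallness threshold may depend on $k$. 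This is harmless if the lemma is read as ``for each $k$'', but if a $k$-uniform threshold is intended (as in the later tame estimates), the commutator organization is cleaner.
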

			\begin{proof}
				To see this, we consider the equation $\widetilde{\Delta}(\underline{G}\sigma)=\rho$ for $\sigma$. This is
					\begin{equation*}
						\sigma+\mathcal{L}(\underline{G}\sigma)=\rho.
					\end{equation*}
				The Schauder-type estimate in Proposition \ref{S_2_Prop_Schauder} implies $\|\mathcal{L}\underline{G}\sigma\|_{,\alpha}\leq C_{\mathcal{L}}\|\sigma\|_{,\alpha}$ for some constant $C_{\mathcal{L}}<\frac{1}{2}$. By applying the Banach contraction mapping theorem, the above equation has a solution $\sigma\in \mathcal{D}^{0,\alpha}$, and by uniqueness $\underline{G}\sigma$ agrees with the weak solution in $L^{2}_{1}$.\\
				
				We use induction to prove Schauder's estimates for higher derivatives. Suppose the estimates \ref{S_2_Eqn_Schauder} hold for $k'\leq k-1$. It follows from Poisson's equation that
					\begin{equation*}
						\|\Delta_{\R^{n}}f\|_{\mathcal{D}^{k,\alpha}}\leq \|\rho\|_{\mathcal{D}^{k,\alpha}}+C_{k}\|\mathcal{L}\|_{C^{k+2,\alpha}}\|f\|_{\mathcal{D}^{k+2,\alpha}}.
					\end{equation*}
Therefore, it remains to bound $\|f\|_{\mathcal{D}^{k+2,\alpha}}$. Choose $D_{k}\in \underline{\mathcal{T}_{k}}$, then we have
					\begin{equation*}
						\widetilde{\Delta}D_{k}f=D_{k}\widetilde{\Delta}f+[\widetilde{\Delta},D_{k}]f.
					\end{equation*}
				Here, $\|[\widetilde{\Delta},D_{k}]f\|_{,\alpha}\leq C_{k}\|\mathcal{L}\|_{C^{k+2,\alpha}}\|f\|_{\mathcal{D}^{k+1,\alpha}}$. Induction hypothesis, together with Proposition \ref{S_2_Prop_Schauder}, proves the lemma.
			\end{proof}

	\subsection{Fermi coordinate and normal structure}
		We step back for a moment and discuss the geometry near the branching set in a Riemannian manifold. Let $(M^{n},g)$ be a compact Riemannian manifold, and let $\Sigma$ be an oriented codimension-$2$ submanifold in $M^{n}$. In this subsection, we focus on a tubular neighborhood of $\Sigma$ in $M$. Let $V \subset \Sigma$ be a local chart, with coordinate $(t^{1},\cdots, t^{n-2})$, and $E_{1}(t), E_{2}(t)$ be a local orthonormal  frame on $N\Sigma$. A \textbf{Fermi coordinate} with respect to this choice of coordinates and frame is defined by
			\begin{equation*}\label{S_2_Eqn_Fermi}
				\begin{split}
					& t^{\alpha}(\exp_{g}(s^{j}E_{j}(t)))=t^{\alpha},\\
					&u^{i}(\exp_{g}(s^{j}E_{j}(t)))=s^{j}.
				\end{split}
			\end{equation*}
			Here, we use Einstein's summation convention, and the indices $\alpha$ range from $1$ to $n-2$, while $j$ takes values in $1$ and $2$.\\
			
		Under the Fermi coordinate, straightforward computation implies that the Laplacian operator admits the following expansion near $\Sigma$
			\begin{equation}\label{S_2_Eqn_Laplacian}
				\Delta_{g}=\Delta_{\Sigma}+\Delta_{\CC}-\mu_{\Sigma}+\mathcal{L}.
			\end{equation}
		Here, $\Delta_{\CC}$ is the Euclidean Laplacian for $s_{1},s_{2}$ coordinates, $\mu_{\Sigma}$ is the mean curvature of $\Sigma$, and $\mathcal{L}$ is a sum of products of two vector fields that vanish on $\Sigma$. However, this operator is not admissible; therefore, we are unable to apply the Schauder-type estimate to this case. Still, we can consider a slightly different operator
			\begin{equation*}
				\widetilde{\Delta}_{g}=W^{-1}(g)\Delta_{g}W(g).
			\end{equation*}
			The function $W(g)$ is defined by $W(g)=1$ away from $\Sigma$, while near $\Sigma$ we set $W(g)=V(g)^{-\frac{1}{2}}$, where $V(g)\ud s^{1}\wedge \ud s^{2}$ denotes the volume form in the normal direction.\\
		
		The above discussion depends on the choice of metric and on the local coordinates near $\Sigma$. To set up the problem in context when the metric is allowed to vary, we therefore introduce the notion of a \textbf{normal structure} on $\Sigma$, consisting of the following data.
			\begin{enumerate}
				\item A normal bundle $N\Sigma \subset TM\big|_{\Sigma}$, such that $TM\big|_{\Sigma}=N\Sigma\oplus T\Sigma$.
				\item A metric on $N\Sigma$.
				\item A $2$-jet along $\Sigma$ of diffeomorphism from the total space of $N\Sigma$ to $M$, extending the canonical $1$-jet. In other words, this is an equivalent class of diffeomorphisms from $N\Sigma$ to $M$ that agree up to second order derivatives along $\Sigma$.
			\end{enumerate}
		Given a Riemannian metric $g$, we define the induced normal structure on $\Sigma$ given by the Fermi coordinate. Suppose $g,g'$ induced the same normal structure. Then the maps $\zeta_{g},\zeta_{g}'$ from a tubular neighborhood of $\Sigma$ to $N\Sigma$ differ by a term of order $r^{2}$. Moreover, we have the following proposition for the operator $\widetilde{\Delta}_{g}$ when the normal structure is fixed.
			\begin{prop}\label{S_2_Prop_Metric_Admissible}
				The operator $\tilde{\Delta}_{g}=W(g)^{-1}\Delta_{g}W(g)$ is admissible in any coordinate system compatible with the normal structure defined by the metric $g$.
			\end{prop}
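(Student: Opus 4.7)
The plan is to unpack the three admissibility conditions and reduce everything to a single local claim: in a Fermi chart compatible with the normal structure, the difference $\tilde\Delta_g - \Delta_{\R^n}$ lies in $\mathcal{T}_2$. Condition (2) and the explicit conjugation form in (1) are built into the definition of $\tilde\Delta_g$, and condition (3) is arranged by taking $W(g) \equiv 1$ outside a compact neighborhood of $\Sigma$ (the metric $g$ being assumed Euclidean there).

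The starting point is the Fermi-coordinate expansion (\ref{S_2_Eqn_Laplacian}),
\[
\Delta_g = \Delta_\Sigma + \Delta_\CC - \mu_\Sigma + \mathcal{L}.
\]
Of the four summands, three cause no trouble. The difference $\Delta_\Sigma - (\partial_{t^1}^2 + \cdots + \partial_{t^{n-2}}^2)$ is a second-order operator built from the purely tangential fields $\partial_{t^\alpha}$ with smooth coefficients, hence lies in $\mathcal{T}_2$; the term $\Delta_\CC$ coincides with the normal part of $\Delta_{\R^n}$ verbatim; and $\mathcal{L}$, being a sum of products of two vector fields that vanish on $\Sigma$, consists of tangential vector fields of the form $s^j X$, and so already belongs to $\mathcal{T}_2$. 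Only the mean-curvature term $-\mu_\Sigma = -\mu_\Sigma^j \partial_{s^j}$ is problematic, because $\partial_{s^j}$ is normal and its coefficient does not vanish on $\Sigma$.

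The crux of the proof is that the conjugation by $W(g) = V(g)^{-1/2}$ eliminates precisely this obstruction. Direct expansion yields
\[
W^{-1} \Delta_\CC W = \Delta_\CC - \frac{\partial_{s^j} V}{V}\,\partial_{s^j} + (\text{smooth potential}),
\]
and the standard identity $\partial_{s^j} \log \sqrt{\det g_{kl}}\big|_{s=0} = \mu_\Sigma^j$ for the normal volume density identifies the restriction of $\partial_{s^j} V / V$ to $\Sigma$ as exactly the coefficient of $\mu_\Sigma$. Consequently the residual first-order operator $(\mu_\Sigma^j - \partial_{s^j} V/V)\,\partial_{s^j}$ has coefficient vanishing on $\Sigma$, so it is of the form $s^k Y^k_j \partial_{s^j}$, a single tangential vector field lying in $\mathcal{T}_1 \subset \mathcal{T}_2$. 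Conjugation of the tangential part $\Delta_\Sigma$ and of $\mathcal{L}$ by $W$ likewise introduces only first- and zeroth-order corrections with smooth coefficients, all of which lie in $\mathcal{T}_2$.

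The hardest part is this cancellation bookkeeping: one has to separate carefully the first-order contributions coming from $\mu_\Sigma$ from those produced by $W^{-1}\partial W$ and verify that their difference has a coefficient vanishing along $\Sigma$. Once that is done, independence from the choice of Fermi chart compatible with the same normal structure follows easily, since two such charts differ by a diffeomorphism whose $2$-jet along $\Sigma$ is trivial, and pulling back $\tilde\Delta_g$ by such a diffeomorphism alters it only by terms whose coefficients vanish to second order along $\Sigma$, which are again in $\mathcal{T}_2$.
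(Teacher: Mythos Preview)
The paper does not give its own proof of this proposition; it simply cites Proposition~6 of Donaldson's paper \cite{Donaldson17}. Your sketch therefore supplies more than the paper does, and it follows exactly the mechanism one expects (and which is carried out in Donaldson's paper): the only obstruction to $\Delta_g - \Delta_{\R^n}$ lying in $\mathcal{T}_2$ is the first-order normal term $-\mu_\Sigma$, and the conjugation by $W = V^{-1/2}$ is designed so that the extra first-order piece $2(\partial_{s^j}\log W)\partial_{s^j}$ cancels it on $\Sigma$, leaving a residual tangential field.

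One point worth tightening: the identity you invoke, $\partial_{s^j}\log V\big|_{s=0} = \mu_\Sigma^j$, depends on exactly what ``volume form in the normal direction'' means. In Fermi coordinates one has $\partial_{s^j}\log\sqrt{\det g}\big|_{s=0} = \tfrac{1}{2}g_\Sigma^{\alpha\beta}\partial_{s^j}g_{\alpha\beta}\big|_{s=0}$ (the normal block $g_{ij}$ has vanishing first normal derivative), which is indeed the mean curvature up to sign; so the cancellation works provided $V$ agrees with $\sqrt{\det g}$ up to a factor depending only on $t$. That tangential factor is harmless since conjugating $\Delta_\Sigma$ by a function of $t$ produces only tangential first-order and zeroth-order corrections, all in $\mathcal{T}_2$. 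With this clarification your argument is complete and matches the intended one.
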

			\begin{proof}
				See Proposition 6 in \cite{Donaldson17}.
			\end{proof}
		Given a normal structure, we define the related $\|\cdot\|_{\mathcal{D}^{k,\alpha}}$ and $\|\cdot\|_{C^{k}_{\alpha}}$ norms in a small tubular neighborhood of $\Sigma$. Let $d>0$ be a sufficiently small constant. we can find an open covering $\{U_{i}\}$ of $\Sigma$ and a set of Fermi coordinates defined over $\{(t,\zeta)\in N\Sigma|_{U_{i}}\big||\zeta|<2d\}$ such that the Riemannian metric $g$ is sufficiently close to the Euclidean metric $g_{\R^{n}}$ and that the coefficients in $\mathcal{L}=\widetilde{\Delta}_{g}-\Delta_{\R^{n}}$ are sufficiently small in the sense of Lemma \ref{S_2_Lemma_Schauder_1}.
		
			\begin{definition}\label{S_2_Defn_DCHolder}
				In the tubular neighborhood of $\Sigma$, we define the $\|\cdot\|_{C^{k}_{\alpha}}$ and $\|\cdot\|_{\mathcal{D}^{k,\alpha}}$ as the maximum of the corresponding norms among the patches $\{(t,\zeta)\in N\Sigma|_{U_{i}}\big||\zeta|<2d\}$. We also define the space $\mathcal{C}^{k,\alpha},0<\alpha<\frac{1}{2}$ to be
					\begin{equation*}
						\mathcal{C}^{k+2,\alpha}:=\{s\in \mathcal{D}^{k+2,\alpha}|\widetilde{\Delta}_{g}s\in \mathcal{D}^{k,\alpha}\},
					\end{equation*}
				and define the $\mathcal{C}^{k+2,\alpha}$ norms to be
					\begin{equation*}
						\|s\|_{\mathcal{C}^{k+2,\alpha}}:=\|\widetilde{\Delta}_{g}s\|_{\mathcal{D}^{k,\alpha}}.
					\end{equation*}
			\end{definition}

		Once the normal structure is fixed and the norms are defined, we now have a global version of the asymptotic expansion of sections around $\Sigma$.
		
		\begin{definition}\label{S_2_Defn_AB}
			Fix a normal structure of $\Sigma$, we define the bounded linear maps
				\begin{equation*}
					\begin{split}
						&\underline{A}: \mathcal{C}^{k+2}\to C^{k+1,\alpha+\frac{1}{2}}(\Sigma,N\Sigma^{-\frac{1}{2}}),\\
						&\underline{B}: \mathcal{C}^{k+2}\to C^{k,\alpha+\frac{1}{2}}(\Sigma,N\Sigma^{-\frac{3}{2}}).
					\end{split}
				\end{equation*}
		\end{definition}
		
		Returning to our set-up, we now study the operator $\widetilde{\Delta}_{g}$ when $g, \Sigma$ varies. Let $\mathcal{M}_{\Sigma}$ denote the set of smooth metrics in a small tubular neighborhood of $\Sigma$ in $M$. Fix a normal structure on $\Sigma$, let $\mathcal{M}_{\Sigma}^{0}\subset \mathcal{M}_{}$ be the subset of metrics that are compatible with the normal structure. We divide the discussion into two steps. First, we consider the deformation of the metric that preserves the normal structure. Then we will show that for any sufficiently small perturbation $(\tilde{g},\tilde{\Sigma})$ there exists a diffeomorphism $\phi$ mapping the tubular neighborhood to itself, such that
			\begin{equation*}
				\phi^{*}\tilde{\Sigma}=\Sigma, \phi^{*}\tilde{g}\in \mathcal{M}_{\Sigma}^{0}.
			\end{equation*}
			From the perspectives of gauge theory, we consider space of metrics together with the deformations of the branching set $\Sigma$. The gauge group is the group of diffeomorphisms, and the gage fixing condition is that the normal structure remains invariant.\\
		  
		  To begin, let $g_{0}$ be a reference metric inducing a normal structure on $\Sigma$, and $g=g_{0}+\gamma\in \mathcal{M}_{\Sigma}^{0}$. Let $\|\cdot \|_{k}$ be the usual H\"older norm in $C^{k,\alpha}(g_{0})$.
		  	\begin{prop}\label{S_2_Prop_Coefficients}
		  		Both $\tilde{\Delta}_{g}$, $\tilde{\Delta}_{g_{0}}$ are admissible operators. Write
		  			\begin{equation*}
		  				\widetilde{\Delta}_{g}=\widetilde{\Delta}_{g_{0}}+\mathcal{L}, \mathcal{L}\in \mathcal{T}_{2}.
		  			\end{equation*}
		  		Denote $\mathcal{F}$ as the coefficients of $\mathcal{L}$, in other words, $\mathcal{F}$ is a section of the dual $2$-jet bundle. Moreover, we have the following estimate
		  			\begin{equation*}
		  				\|\mathcal{F}\|_{k}\leq C_{k}\|\gamma\|_{k+K}.
		  			\end{equation*}
		  	\end{prop}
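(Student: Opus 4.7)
The plan is to split the statement into three ingredients: admissibility of both conjugated operators, the assertion that the difference lies in $\mathcal{T}_2$, and the tame estimate on its coefficients. The first two reduce at once to the preceding proposition. Since $g$ and $g_0$ both lie in $\mathcal{M}_\Sigma^0$, they induce the same normal structure on $\Sigma$, so a single Fermi coordinate chart is compatible with both metrics in the sense of Proposition~\ref{S_2_Prop_Metric_Admissible}. Applied twice in the same chart, that proposition yields
$$\widetilde{\Delta}_{g_0} = \Delta_{\R^n} + \mathcal{L}_{g_0}, \qquad \widetilde{\Delta}_{g} = \Delta_{\R^n} + \mathcal{L}_{g},$$
with $\mathcal{L}_{g_0},\mathcal{L}_{g}\in \mathcal{T}_2$. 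Subtracting cancels the Euclidean piece, so $\mathcal{L} = \mathcal{L}_{g}-\mathcal{L}_{g_0}\in \mathcal{T}_2$, and its coefficient array $\mathcal{F}$ is a section of the dual $2$-jet bundle defined on a neighborhood of $\Sigma$.

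For the estimate I would expand $\widetilde{\Delta}_g$ explicitly in Fermi coordinates, starting from the Laplace--Beltrami formula $\Delta_g = (\det g)^{-1/2}\,\partial_i\bigl((\det g)^{1/2}\,g^{ij}\,\partial_j\bigr)$ and conjugating by $W(g) = V(g)^{-1/2}$. A direct calculation shows that the coefficients of $\widetilde{\Delta}_g$ are rational-algebraic in the entries of $(g_0+\gamma)_{ij}$ and polynomial in its first and second coordinate derivatives; the second derivatives enter through the conjugation contributions $W^{-1}\partial W$ and $W^{-1}\partial^2 W$. Subtracting the analogous expression for $g_0$ produces $\mathcal{F} = F(g_0,\gamma,\partial\gamma,\partial^2\gamma)$, where $F$ is smooth in its arguments on a neighborhood of $\gamma=0$ and vanishes there. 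Standard Moser-type composition estimates in the H\"older space $C^{k,\alpha}$, together with the $C^{2,\alpha}$-smallness of $\gamma$ forced by remaining in $\mathcal{M}_\Sigma^0$, then yield
$$\|\mathcal{F}\|_k \le C_k\,\|\gamma\|_{k+K}$$
with $K = 2$; any larger integer value also works and absorbs the half-derivative H\"older slack in the composition rule.

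The one point that requires care is verifying that all non-tangential contributions cancel in the subtraction, so that $\mathcal{L}$ really is an element of $\mathcal{T}_2$ and not merely a second-order operator with small coefficients. This is however exactly the content of admissibility established in the first paragraph, so the only real obstacle is a routine chain-rule bookkeeping inside the composition estimate; I expect this to be straightforward once the explicit Fermi-coordinate expansion of $\widetilde{\Delta}_g - \widetilde{\Delta}_{g_0}$ is laid out.
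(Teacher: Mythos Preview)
Your argument is sound and in fact more explicit than what the paper provides: the paper's own proof is simply the one-line citation ``See Proposition 6 in \cite{Donaldson17}.'' Your reduction of admissibility and the $\mathcal{T}_2$ claim to Proposition~\ref{S_2_Prop_Metric_Admissible} applied in a common Fermi chart is exactly the right move, and your Moser-type composition estimate for the coefficients, with $K=2$ coming from the two derivatives of $\gamma$ entering through $W^{-1}\Delta_g W$, is the expected mechanism behind the cited result.
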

		  	\begin{proof}
		  		See Proposition 6 in \cite{Donaldson17}.
		  	\end{proof}
		  	
		  	\begin{lemma}\label{S_2_Lemma_Deformation}
		  		Fix a normal structure of $\Sigma$ and a compatible Riemannian metric $g_{0}$ in a tubular neighborhood of $\Sigma$. Suppose the perturbation of $\widetilde{\Sigma}$ is given by a section $v$ in $N\Sigma$. then there exists a diffeomorphism $\phi$ such that it is equal to the identity outside the region $\{|\zeta|<d\}$ and that
		  			\begin{equation*}
						\Phi^{*}\tilde{\Sigma}=\Sigma, \Phi^{*}\tilde{g}=g+\gamma\in \mathcal{M}_{\Sigma}^{0},
					\end{equation*}
				with $\|v\|_{3K}, \|g-\tilde{g}\|_{3K}\ll 1$.
				Moreover, the following estimates hold
					\begin{equation}\label{S_2_Eqn_NormalEst}
							\|\Phi-Id\|_{k}, \; \|\gamma\|_{k}\leq C\big(\|\tilde{g}-g\|_{k+K}+\|v\|_{k+K}\big),0\leq k\leq K.	
					\end{equation}
		  	\end{lemma}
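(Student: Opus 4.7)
The plan is to construct $\Phi$ as a composition $\Phi_{1}\circ \Phi_{2}$, where $\Phi_{1}$ moves $\Sigma$ onto $\widetilde{\Sigma}$ and $\Phi_{2}$ subsequently fixes $\Sigma$ pointwise while straightening the pulled-back metric into the slice $\mathcal{M}_{\Sigma}^{0}$. Both factors will be built explicitly from the data $(v,\widetilde{g}-g_{0})$ in the Fermi coordinates of $g_{0}$, so the derivative-loss constant $K$ in \eqref{S_2_Eqn_NormalEst} can be traced by inspection.

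For $\Phi_{1}$, work in the Fermi coordinates $(t,\zeta)$ of $g_{0}$ and use $\|v\|_{C^{1}}\ll 1$ to write $\widetilde{\Sigma}$ as the graph $\{\zeta=v(t)\}$ over $\Sigma$. Fix a smooth cutoff $\chi$ with $\chi\equiv 1$ on $[0,1/2]$ and $\chi\equiv 0$ on $[1,\infty)$, and set
\[
\Phi_{1}(t,\zeta):=\bigl(t,\;\zeta+\chi(|\zeta|/d)\,v(t)\bigr).
\]
This is a diffeomorphism equal to the identity outside $\{|\zeta|<d\}$, with $\Phi_{1}(\Sigma)=\widetilde{\Sigma}$. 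By the chain rule, the pulled-back metric $g^{(1)}:=\Phi_{1}^{*}\widetilde{g}$ has $\Sigma$ as its branching set and satisfies $\|g^{(1)}-g_{0}\|_{k}\leq C(\|\widetilde{g}-g_{0}\|_{k}+\|v\|_{k+1})$.

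For $\Phi_{2}$, I invoke the intrinsic Fermi diffeomorphism $F_{g^{(1)}}\colon N_{g^{(1)}}\Sigma\to M$ of $g^{(1)}$, namely the geodesic exponential of the $g^{(1)}$-normal vectors along $\Sigma$. Let $\tau\colon N\Sigma\to N_{g^{(1)}}\Sigma$ be the fibrewise linear map obtained as the $g_{0}$-orthogonal projection onto $N_{g^{(1)}}\Sigma$, followed by the unique fibrewise linear isometry identifying the reference metric on $N\Sigma$ with the metric induced by $g^{(1)}$ on $N_{g^{(1)}}\Sigma$. Define
\[
\Phi_{2}:=F_{g^{(1)}}\circ \tau\circ F_{g_{0}}^{-1}
\]
in a neighborhood of $\Sigma$, and interpolate to the identity outside $\{|\zeta|<d\}$ using a radial cutoff applied to the displacement $\Phi_{2}-\mathrm{id}$ in Fermi coordinates. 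Because $\Phi_{2}$ is by construction an isometry from $(M,\Phi_{2}^{*}g^{(1)})$ to $(M,g^{(1)})$ near $\Sigma$, a direct unravelling using $\exp^{\Phi_{2}^{*}g^{(1)}}=\Phi_{2}^{-1}\circ \exp^{g^{(1)}}\circ\, d\Phi_{2}$ shows that the Fermi diffeomorphism of $\Phi_{2}^{*}g^{(1)}$ coincides with $F_{g_{0}}$ near $\Sigma$; thus $\Phi_{2}^{*}g^{(1)}$ lies in $\mathcal{M}_{\Sigma}^{0}$. The dependence of $F_{g^{(1)}}$ on $g^{(1)}$ is through the geodesic ODE and contributes up to two extra derivatives of the metric, which is the source of the derivative-loss constant $K$.

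Setting $\Phi:=\Phi_{1}\circ\Phi_{2}$ and $\gamma:=\Phi^{*}\widetilde{g}-g_{0}$, the estimate \eqref{S_2_Eqn_NormalEst} follows from the chain rule combined with the two bounds above. I expect the main technical obstacle to lie in the second step: the radial cutoff used to extend $\Phi_{2}$ to the identity must not spoil the normal-structure condition along $\Sigma$. This works because $F_{g^{(1)}}\circ\tau$ and $F_{g_{0}}$ agree identically on the zero section of $N\Sigma$, both mapping it onto $\Sigma\subset M$, so a $|\zeta|$-dependent cutoff of $\Phi_{2}-\mathrm{id}$ vanishes to all orders along $\Sigma$; in particular the $2$-jet of the Fermi map along $\Sigma$, which is exactly what the normal structure records, is left unchanged, and membership in $\mathcal{M}_{\Sigma}^{0}$ is preserved.
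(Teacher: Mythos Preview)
Your two-step strategy is sound and yields the lemma, but the construction differs from the paper's in a way worth noting. The paper builds both diffeomorphisms by a Moser-type argument: for the first step it integrates the cut-off time-derivative of the family $\tilde\phi_t$ obtained by exponentiating $tv$; for the second step it takes the path $g_t=g_0+t(\tilde g-g_0)$, forms $\tilde\phi_t=\exp_{g_0}\circ\exp_{g_t}^{-1}$, and again integrates the cut-off of $\frac{d}{dt}\tilde\phi_t$. The virtue of the flow approach is that cutting off a vector field and integrating always produces a genuine diffeomorphism supported in the desired region, and no separate bundle map $\tau$ is needed since $\exp_{g_t}^{-1}$ already encodes the $g_t$-normal directions. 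Your approach is more explicit---the translation map $\Phi_1$ and the composite $\Phi_2=F_{g^{(1)}}\circ\tau\circ F_{g_0}^{-1}$ are written down directly---which makes the derivative count transparent, but you then rely on smallness of $\|\Phi_2-\mathrm{id}\|_{C^1}$ to ensure the cut-off displacement map remains a diffeomorphism.

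Two small imprecisions: your description of $\tau$ (``projection followed by the unique fibrewise isometry'') is not quite well-posed as stated---what you want is the isometric part of the polar decomposition of the projection $N_{g_0}\Sigma\to N_{g^{(1)}}\Sigma$, which is canonical and close to the identity. And your final claim that the cut-off of $\Phi_2-\mathrm{id}$ ``vanishes to all orders along $\Sigma$'' is false (already $d\Phi_2|_\Sigma\neq\mathrm{id}$ on the normal part); the correct statement, which is all you need, is that the cutoff function is identically $1$ near $\Sigma$, so the $2$-jet of the cut-off map along $\Sigma$ agrees with that of $\Phi_2$, preserving membership in $\mathcal{M}_\Sigma^0$.
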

		  	\begin{proof}
		  		When $v\neq 0$ we can use the exponential map and the normal vector $t v, t \in [0,1]$ to define a family of diffeomorphisms $\tilde{\phi}_{t}$. Now let $\tilde{X}_{t}=\frac{\ud}{\ud t}\tilde{\phi}_{t}$, we then cut off $\tilde{X}_{t}$ to get a new  family of vector fields $X_{t}$ such that it is equal to $\tilde{X}_{t}$ on $\{|\zeta|<\frac{d}{2}\}$ and vanishes on $\{|\zeta|>d\}$. Integrating $X_{t}$, we obtain a diffeomorphism $\phi_{0}$ that is equal to the identity on $\{|\zeta|>d\}$ and that $\phi_{0}^{*}\widetilde{\Sigma}=\Sigma$. It reduces to the case when $\widetilde{\Sigma}=\Sigma$. Let $g_{t}=g_{0}+t(\tilde{g}-g_{0})$, for each $g_{t}$, we define
		  			\begin{equation*}
		  				\tilde{\phi}_{t}=\exp_{g_{0}}\circ \exp_{g_{t}}^{-1},
		  			\end{equation*}
		  			which maps the normal structure induced by $g_{t}$ to the normal structure induced by $g_{0}$. Similarly, integrating the vector field obtained by cutting off the vector field $\frac{\ud}{\ud t}\tilde{\phi}_{t}$ gives the desired diffeomorphism $\phi$. In particular, the estimates in Equation \ref{S_2_Eqn_NormalEst} hold. The estimate for $k\geq K$ requires a version of the interpolation formula, and we postpone this to Section \ref{S_4}.
		  	\end{proof}

\section{Ansatz for gluing model}\label{S_3}
	In this section, we construct approximate solutions starting from a $\Z_{2}$-harmonic $1$-form that has only branching and regular zeros. The strategy is to replace each regular zero with a rescaled model of a non-degenerate $\Z_{2}$-harmonic $1$-form on $\R^{n}$. We begin by collecting known results about $\Z_{2}$-harmonic $1$-forms with linear growth in $\R^{n}, n\geq 3$, with an emphasis on their asymptotic behavior at infinity. We then define a weighted H\"older space adapted to the geometry of the approximate solution.

	\subsection{Non-degenerate $\Z_2$-harmonic $1$-forms on $\R^{n}$}
		Let $\Sigma$ be a compact embedded codimension-$2$ submanifold in $\R^{n}$ and let $L$ be a flat line bundle on $\R^{n}\setminus \Sigma$ with monodromy $-1$ along any loop that links $\Sigma$. Recall that $\alpha \in \Gamma(\R^{n}\setminus \Sigma, T^{*}\R^{n}\otimes  L)$ is a $\Z_{2}$-harmonic $1$-form if it satisfies $\ud \alpha=\ud^{*}\alpha=0$. Suppose $B$ is a sufficiently large ball that contains $\Sigma$, then there are two choices of trivialization of $L$ over $\R^{n}\setminus B$, differing by multiplication by $-1$.\\
		
		Fixing a trivialization, we can write the $\Z_{2}$-harmonic $1$-form as
			\begin{equation*}
				\alpha=\ud f, \text{ on } \R^{n}\setminus B,
			\end{equation*}
			for a harmonic function $f$. We are particularly interested in the case where $|\alpha|$ grows linearly at infinity. Using separation of variables in spherical and radial directions, we conclude that
				\begin{equation*}
					f=P+O'(|x|^{2-n}),
				\end{equation*}
			where $P$ is a harmonic polynomial of degree $2$. One way to construct $\Z_{2}$-harmonic $1$-forms is to take the differential of a $\Z_{2}$-harmonic function. When $\Sigma$ is fixed, it was shown by Weifeng Sun \cite{Weifeng22} and later Haydys, Mazzeo and Takahashi \cite{Haydiy23} that there is a $1$-$1$ correspondence between $\Z_{2}$-harmonic functions and harmonic polynomials on $\R^{n}$.
			
			\begin{prop}\label{S_3_Prop_Classification}[Theorem 0.4 \cite{Weifeng22}]
				If $\Sigma$ is a compact, smoothly embedded, oriented codimension-$2$ submanifold in $\R^{n}, n\geq 3$. Then there is a $1$-$1$ correspondence (up to a sign) between a $\Z_{2}$-harmonic function $f$ that branches along $\Sigma$ with polynomial growth and a harmonic polynomial $P$. The bijection is given by
					\begin{equation*}
						f-P\to 0, |\bs{x}|\to \infty.
					\end{equation*} 
			In particular, the difference is bounded by $O'(|\bs{x}|^{2-n})$.
			\end{prop}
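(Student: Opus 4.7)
The plan is to establish the bijection by proving injectivity and surjectivity separately, using the weak $L^{2}_{1}$ theory and the Green's function of the Laplacian twisted by $L$, in the spirit of the flat-model analysis in Section 2.1. Once a trivialization of $L$ over $\R^{n}\setminus B_{R}$ is fixed (for $R$ so large that $\Sigma\subset B_{R}$), the map $f\mapsto P$ is well-defined, and the sign ambiguity of the proposition reflects the two possible choices of trivialization.

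For \textbf{injectivity}, suppose $f_{1},f_{2}$ are two $\Z_{2}$-harmonic functions corresponding to the same polynomial $P$. Their difference $h:=f_{1}-f_{2}$ is $\Z_{2}$-harmonic and tends to $0$ at infinity. On $\R^{n}\setminus B_{R}$, where $h$ is an ordinary harmonic function decaying at infinity, separation of variables forces the expansion to begin at order $|\bs{x}|^{2-n}$. I would apply Green's identity to $\int_{B_{R}\setminus N_{\epsilon}(\Sigma)}|\nabla h|^{2}$ and let $R\to\infty$ and $\epsilon\to 0$. The outer boundary integral is bounded by $|h|\cdot|\nabla h|\cdot R^{n-1}=O(R^{-1})$ and vanishes. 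The boundary integral over $\p N_{\epsilon}(\Sigma)$ vanishes as $\epsilon\to 0$ because the leading $r^{1/2}$-expansion of a $\Z_{2}$-harmonic function (reviewed in Section \ref{S_1} and Proposition \ref{S_2_Prop_Asym}) gives an integrand $h\,\p_{r}h$ of size $O(1)$ over a tube of volume $O(\epsilon)$, so the total contribution is $O(\epsilon)$. This forces $\nabla h\equiv 0$, hence $h\equiv 0$.

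For \textbf{surjectivity}, I construct $f$ from $P$ by a cutoff-and-correct procedure. Since $n\geq 3$ and $\Sigma\subset B_{R}$, the region $\R^{n}\setminus \overline{B_{R}}$ is simply connected, so $L$ trivializes there. Fix such a trivialization and a smooth cutoff $\chi$ with $\chi\equiv 0$ on $B_{R}$ and $\chi\equiv 1$ outside $B_{2R}$. Under the trivialization, $\chi P$ extends to a smooth section of $L$ on $\R^{n}\setminus \Sigma$ (zero on $B_{R}\setminus \Sigma$, equal to $P$ outside $B_{2R}$, and smoothly interpolated on the annulus). The source $\rho := -\Delta(\chi P) = -P\,\Delta\chi - 2\nabla\chi\cdot \nabla P$ is compactly supported in $B_{2R}\setminus B_{R}$, hence lies in $L^{2}$ and is supported away from $\Sigma$. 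The Hilbert-space argument of Section 2.1 produces a unique weak solution $u=\underline{G}\rho\in L^{2}_{1}$ to $\Delta u=\rho$; then $f:=\chi P+u$ is the desired $\Z_{2}$-harmonic function. The decay $u=O'(|\bs{x}|^{2-n})$ follows from the integral representation $u(p)=\int G(p,p')\rho(p')\,\ud p'$: since $\rho$ is compactly supported and $G(p,p')\sim |p-p'|^{2-n}$ for $p$ far from $\mathrm{supp}(\rho)$, one gets $|u(p)|=O(|\bs{x}|^{2-n})$, and the derivative bounds $|\nabla^{k}u|=O(|\bs{x}|^{2-n-k})$ follow from standard interior elliptic estimates on dyadic annuli.

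The main obstacle will be making the boundary-term analysis near $\Sigma$ in the uniqueness argument fully rigorous: it requires controlling the full polyhomogeneous expansion of $\Z_{2}$-harmonic functions and checking that every mode in the expansion of $h\cdot \p_{r}h$ on $\p N_{\epsilon}(\Sigma)$ contributes at most $O(\epsilon^{\delta})$ for some $\delta>0$. A secondary, more routine task is verifying that the Green's function $G$ on $(\R^{n}\setminus \Sigma, L)$ satisfies the expected off-diagonal estimate globally; this should follow by matching the local model of Section 2.1 near $\Sigma$ to the standard Euclidean Green's function far from $\Sigma$ via a partition of unity argument.
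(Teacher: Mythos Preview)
The paper does not supply a proof of this proposition: it is quoted as Theorem 0.4 of \cite{Weifeng22} and used as a black box, so there is no argument in the paper to compare yours against.

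Your outline is essentially the standard proof and is correct. Two remarks. First, the boundary analysis near $\Sigma$ that you flag as the ``main obstacle'' is in fact complete as you wrote it: every mode in the polyhomogeneous expansion of a $\Z_{2}$-harmonic function is $r^{k+1/2}$ with $k\geq 0$ (the $r^{-1/2}$ mode is excluded by local $L^{2}_{1}$ membership), so $h\,\p_{r}h=O(1)$ termwise and the area of $\p N_{\epsilon}(\Sigma)$ is $O(\epsilon)$; there is nothing further to check. Second, for surjectivity you do not need a global Green's function on $(\R^{n}\setminus\Sigma,L)$: the Riesz argument already gives $u\in L^{2}_{1}$, hence $u\in L^{2n/(n-2)}$ by the Sobolev inequality on $\R^{n}$, and since $u$ is an ordinary harmonic function on $\R^{n}\setminus B_{2R}$ lying in a finite $L^{p}$ space, separation of variables forces $u=O'(|\bs{x}|^{2-n})$ directly. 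This bypasses the partition-of-unity patching you describe as a secondary task.
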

			
		The $\Z_{2}$-harmonic functions above are of $O(r^{\frac{1}{2}})$ near $\Sigma$, where $r$ is the distance to $\Sigma$. But for our purpose, that the differentials are $\Z_{2}$-harmonic $1$-forms, one needs the functions in a more restricted class, say, they will be $O(r^{\frac{3}{2}})$ near $\Sigma$. In the existence problem, such conditions are not satisfied a priori, and finding the branching set must be taken into consideration.\\
		
		Examples of $\Z_{2}$-harmonic functions, whose differentials are non-degenerate $\Z_{2}$-harmonic $1$-forms, were recently discovered by Donaldson in $\R^{3}$ using the twistor method \cite{Donaldson25B} and later by the author in $\R^{n}, n\geq 3$ \cite{Yan25} by solving the Laplacian equation in ellipsoidal coordinates and also taking a certain limit of Lawlor's necks.
			\begin{prop}\label{S_3_Prop_Construction}
				For any positive numbers $h_{1},\cdots, h_{n-1}$, there exists a non-degenerate $\Z_{2}$-harmonic function $f_{\bs{h}}$ on $\R^{n}$, whose branching set is a codimension-$2$ ellipsoid 
				\begin{equation*}
					E_{\bs{h}}: \sum_{i=1}^{n-1}\frac{x_{i}^{2}}{h_{i}^{2}}=1, x_{n}=0,
				\end{equation*}
				and such that $\ud f_{\bs{h}}\neq 0$ outside $E_{\bs{h}}$, and that at a large distance, we can pick a single-valued branch of $f_{\bs{h}}$, on which
				\begin{equation*}
					f_{\bs{h}}=a_{0}-\sum_{i=1}^{n}a_{i}x_{i}^{2}+O'(|\bs{x}|^{2-n}).
				\end{equation*}
				Here, let $S(y)=\prod_{i=1}^{n-1}(y+h_{i}^{2})$ and $a_{i}$ be constants given by
				\begin{equation*}
					\begin{split}
						&a_{i}=\frac{\prod_{j=1}^{n-1}h_{j}}{2}\int_{0}^{\infty}\frac{\ud u}{(u^{2}+h_{i}^{2})\sqrt{S(u^{2})}}, 1\leq i\leq n-1;\\
						&a_{n}=-\frac{\prod_{j=1}^{n-1}h_{j}}{2}\int_{0}^{\infty}\frac{S'(u^{2})\ud u}{S(u^{2})^{3/2}};\\
						&a_{0}=\frac{\prod_{j=1}^{n-1}h_{j}}{2}\int_{0}^{\infty}\frac{\ud u}{\sqrt{S(u^{2})}}.
					\end{split}
				\end{equation*}
				Moreover, the map $(h_{1},\cdots, h_{n-1})\mapsto (a_{1},\cdots,a_{n-1})$ from $(\R_{+})^{n-1}$ to $(\R_{+})^{n-1}$ is bijective.
			\end{prop}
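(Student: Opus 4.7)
The plan is to construct $f_{\bs h}$ by separation of variables in the confocal ellipsoidal coordinates on $\R^{n}$ adapted to $E_{\bs h}$. For $\bs x\in\R^n$ outside the closed disk in $\{x_n=0\}$ bounded by $E_{\bs h}$, let $\lambda=\lambda(\bs x)>0$ be the unique positive root of
\begin{equation*}
\sum_{i=1}^{n-1}\frac{x_{i}^{2}}{\lambda+h_{i}^{2}}+\frac{x_{n}^{2}}{\lambda}=1,
\end{equation*}
so that $\lambda\to 0$ on $E_{\bs h}$ and $\lambda\sim|\bs x|^{2}$ at infinity. Under the single-variable ansatz $f_{\bs h}(\bs x)=F(\lambda(\bs x))$, implicit differentiation of the defining equation expresses both $|\nabla\lambda|^{2}$ and $\Delta\lambda$ in terms of $\lambda$ and a common Cartesian factor, which is the classical reason Laplace's equation in confocal coordinates admits a one-variable reduction. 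The harmonic equation $\Delta f_{\bs h}=0$ then collapses to an ODE for $F$ alone with integrating factor $\sqrt{\lambda S(\lambda)}$, whose general solution pairs a particular solution of linear growth in $\lambda$ with the kernel $c_{1}+c_{2}\int d\lambda/\sqrt{\lambda S(\lambda)}$. I would fix the two free constants by requiring $F(0)=0$ on $E_{\bs h}$ and the prescribed linear-in-$\lambda$ leading coefficient at infinity; the substitution $\lambda=u^{2}$ converts the resulting integrals into the forms $\int_{0}^{\infty}du/((u^{2}+h_{i}^{2})\sqrt{S(u^{2})})$, $\int_{0}^{\infty}S'(u^{2})\,du/S(u^{2})^{3/2}$, and $\int_{0}^{\infty}du/\sqrt{S(u^{2})}$ appearing in the statement, yielding the coefficients $a_{i}$, $a_{n}$, and $a_{0}$.

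To read off the Cartesian asymptotics, I would expand $\lambda=|\bs x|^{2}-\sum h_{i}^{2}\hat x_{i}^{2}+O(|\bs x|^{-2})$ as $|\bs x|\to\infty$ and substitute into the integral formula for $F$, obtaining $f_{\bs h}=a_{0}-\sum a_{i}x_{i}^{2}+O'(|\bs x|^{2-n})$, with the remainder controlled by Proposition \ref{S_3_Prop_Classification}. Harmonicity of the leading quadric, $\sum_{i=1}^{n}a_{i}=0$, follows from a single integration by parts in $\int_{0}^{\infty}S'(u^{2})/S(u^{2})^{3/2}\,du$ combined with the identity $S'/S=\sum_{i<n}1/(u^{2}+h_{i}^{2})$. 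The candidate $F(\lambda)$ is then extended from $\R^n\setminus\overline D$ (where $D$ is the closed disk bounded by $E_{\bs h}$) to a $\Z_{2}$-valued function on $\R^n\setminus E_{\bs h}$ by sign-reversal across $D$, realizing the double branched cover. Near $E_{\bs h}$, a local calculation in Fermi coordinates $(t,\zeta=re^{i\theta})$ of $E_{\bs h}$ gives $\lambda=2\mu(t)\,r\cos^{2}(\theta/2)+O(r^{2})$ with $\mu(t)>0$ smooth, so the near-boundary expansion $F(\lambda)=c_{0}\sqrt{\lambda}+c_{1}\lambda^{3/2}+O(\lambda^{5/2})$ translates into the Fermi form $A(t)\,\mathrm{Re}(\zeta^{1/2})+B(t)\,\mathrm{Re}(\zeta^{3/2})+O(r^{5/2})$ with $A\equiv 0$ (from $F(0)=0$) and $B(t)$ nonvanishing (from the subleading coefficient of the ODE solution at $\lambda=0$), establishing nondegeneracy in the sense of Definition \ref{S_1_Defn_Z2}. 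Nonvanishing of $\ud f_{\bs h}$ off $E_{\bs h}$ is immediate because $F'(\lambda)=c/\sqrt{\lambda S(\lambda)}\neq 0$ and $\nabla\lambda\neq 0$ there.

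For the surjectivity of $\bs h\mapsto \bs a$, which I expect to be the main obstacle, my plan is a degree-theoretic argument. The map is smooth on $(\R_{+})^{n-1}$ and equivariant under the permutation action of $S_{n-1}$. Properness would follow from the asymptotics of the integrals in the two boundary regimes: as $h_{i}\to 0^{+}$ with the remaining $h_{j}$ bounded, the integrand in $a_{i}$ concentrates near $u=0$ and forces $a_{i}\to\infty$, while as $h_{i}\to\infty$ the factor $1/(u^{2}+h_{i}^{2})$ forces $a_{i}\to 0$. At the fully symmetric point $h_{1}=\cdots=h_{n-1}$, equivariance together with scaling reduces the local Jacobian to a one-variable monotone derivative, giving local degree $+1$. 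Combining properness with degree one then gives surjectivity of $\bs h\mapsto\bs a$ onto $(\R_{+})^{n-1}$.
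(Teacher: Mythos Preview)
Your plan to work in confocal ellipsoidal coordinates is one of the two legitimate routes (the paper itself sketches the other, via limits of Lawlor's necks; see the parametrization in Equation~\eqref{S_3_Eqn_Graph}). But the specific single-variable ansatz $f_{\bs h}(\bs x)=F(\lambda(\bs x))$ cannot produce the function in the proposition, and this is a genuine gap rather than a missing detail.

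First, the ODE you write down, $\big(\sqrt{\lambda S(\lambda)}\,F'\big)'=0$, is a second-order linear \emph{homogeneous} equation, so its solution space is two-dimensional: $F=c_{1}+c_{2}\int_{0}^{\lambda}\frac{d\mu}{\sqrt{\mu S(\mu)}}$. For $n\ge 3$ the integrand behaves like $\mu^{-n/2}$ at infinity, so both independent solutions are \emph{bounded} on $[0,\infty)$. There is no ``particular solution of linear growth in $\lambda$'' in this family, and hence no way to match the quadratic asymptotic $-\sum a_{i}x_{i}^{2}$ required by the statement.

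Second, even setting growth aside, the boundary analysis does not give $A\equiv 0$. Your $\Z_{2}$ extension by sign-reversal across the disk forces $F(0)=0$, i.e.\ $c_{1}=0$, which leaves $F(\lambda)=c_{2}\int_{0}^{\lambda}\frac{d\mu}{\sqrt{\mu S(\mu)}}\sim \tfrac{2c_{2}}{\prod h_{j}}\sqrt{\lambda}$ near $\lambda=0$. Using your own Fermi expansion $\lambda\approx 2\mu(t)\,r\cos^{2}(\theta/2)$, this becomes $F\sim C(t)\,\mathrm{Re}(\zeta^{1/2})$, so the leading Fermi coefficient is the $A$-term, not the $B$-term. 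Killing $A$ would force $c_{2}=0$ and hence $F\equiv 0$. In short, a nontrivial radial solution always has $|\nabla\alpha|\notin L^{2}_{\mathrm{loc}}$ and is not $\Z_{2}$-harmonic in the sense of Definition~\ref{S_1_Defn_Z2}.

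The underlying reason is that $f_{\bs h}$ genuinely depends on all $n$ confocal coordinates, not just the outermost one: from the paper's graph parametrization one sees $\partial_{k}f_{\bs h}=x_{k}\beta_{k}(s)$ with \emph{different} functions $\beta_{k}$ for each $k$, which is incompatible with $f=F(\lambda)$. A correct ellipsoidal-coordinate proof (as in \cite{Yan25}) uses a separable product ansatz across all $\lambda_{i}$ (Lam\'e-type functions), where the degree-two growth enters through the Lam\'e polynomial factors, not through a single radial ODE.
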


		Although the above examples are the only known non-degenerate $\Z_{2}$-harmonic $1$-forms with linear growth and a compact branching set on $\R^{n}$, they are sufficient for the geometric applications in $3$-manifolds, which are our primary focus. In a slightly different direction, viewing $\Z_{2}$-harmonic $1$-forms as infinitesimal branched deformations of special Lagrangians over $\R^{n}$, based on the uniqueness results for exact special Lagrangians with index $1$ and $n-1$ \cite{Imagi16}, our examples appear to encompass all exact non-degenerate $\Z_{2}$-harmonic 1-forms with index $1$ and $n-1$.\\

		On the other hand, it is possible that other types of compact codimension-$2$ submanifolds may support non-degenerate $\Z_{2}$-harmonic $1$-forms with linear growth. In cases where the index is $1$ or $n-1$, such forms may not be exact. In this paper, we carry out our gluing construction starting from $\Z_{2}$-harmonic $1$-forms of arbitrary index, with the hope that more examples will be discovered in future work.
		
	\subsection{Model solutions and weighted H{\"o}lder spaces}\label{S_3_2}
		In this subsection, we will construct the approximate solution for our gluing problem and study the mapping properties of the Laplacian in some suitable weighted H\"older spaces.\\
	
		Suppose $\alpha$ is a non-degenerate $\Z_{2}$-harmonic $1$-form on $(M,g)$ with branching set $\Sigma_{0}$ and let $p$ be one of its zeros. Denote its branching set as $\Sigma_{0}$ Consider the normal coordinate on a small geodesic ball of radius $2\delta$ $B_{2\delta}$ centered at $p$. Similar to Proposition \ref{S_2_Prop_Metric_Admissible} , in this coordinate, the Riemannian metric
			\begin{equation*}
				g_{ij}=\delta_{ij}-\frac{1}{3}R_{ikjl}x^{k}x^{l}+O(|x|^{3}).
			\end{equation*}
			By direct computation, the scalar Laplacian can be written as
			\begin{equation*}
				\Delta_{g}=\Delta_{\R^{n}}+\mathcal{K},
			\end{equation*}
			where $\mathcal{K}$ is a sum of products of at most two vector fields that vanish on $p$.\\
			
		Write $\alpha=\ud f_{p}$ on $B_{\delta}$ for a harmonic function $f_{p}$ with $f_{p}(p)=0$. Take Taylor's expansion to the third order
			\begin{equation*}
				f_{p}=P_{p,2}(x)+O(|x|^{3}).
			\end{equation*}
		Here, $P_{2}, P_{3}$ are homogeneous polynomials of degree $2$ and $3$ respectively. Meanwhile, the Taylor's expansion of $\Delta_{g}f$ is
			\begin{equation*}
				\Delta_{g}f_{p}=\Delta_{\R^{n}}(P_{p,2})+O(|x|).
			\end{equation*}
		We conclude that both $P_{2}$ and $P_{3}$ are harmonic with respect to the flat metric on $\R^{n}$. Moreover, if $p$ is a regular zero of $\alpha$, then $P_{p,2}$ defines a non-degenerate quadric. We can pick a coordinate so that
			\begin{equation*}
				P_{p,2}=\sum_{k=1}^{n-m}a_{k}(x^{k})^{2}-\sum_{l=n-m+1}^{n}a_{l}(x^{l})^{2},\quad a_{k},a_{l}>0.
			\end{equation*}

		Suppose $\{p_{i}\}$ is a subset of the set of regular zeros of $\alpha$, and for each $p_{q}$, there exists a non-degenerate $\Z_{2}$-harmonic $1$-form $\alpha_{q}$ with a compact branching set $\Sigma_{q}$ on $\R^{n}$ and
			\begin{equation*}
				\begin{split}
					& \alpha_{q}(y)=\ud h_{q}(y), |y|\gg 1,\\
					& \alpha(x)=\ud f_{q}(x), |x|\ll 1.
				\end{split}
			\end{equation*}
			where
			\begin{equation*}
				\begin{split}
					& h_{q}(y)=P_{i,2}(y)+O'(|y|^{2-n}), |y|\gg 1,\\
					& f_{q}(x)=P_{i,2}(x)+P_{i,3}(x)+O(|x|^{4}), |x|\ll 1.
				\end{split}
			\end{equation*}

		Let $\epsilon$ be the gluing parameter that will go to $0$. Write $\Psi_{q}$ as  a diffeomorphism from $B_{2\delta}(p_{q})$ to a large open set in $\R^{n}$ defined by
			\begin{equation*}
				\Psi_{q}:=\epsilon^{-1}\exp_{p_{q}}^{-1}.
			\end{equation*}
		It is a composition of rescaling and the normal coordinate at $p_{q}$. We define $\Sigma_{\epsilon}$ to be the disjoint union
			\begin{equation*}
				\Sigma_{0}\sqcup\big(\bigsqcup_{q}\Psi_{q}^{-1}\Sigma_{q}\big).
			\end{equation*}
		When $\epsilon\to 0$, $\Psi^{*}_{q}\Sigma_{q}$ will shrink to $p_{q}$. Now define $L_{\epsilon}$ to be the flat line bundle on $M\setminus \Sigma_{\epsilon}$ with monodromy $-1$ around each component.\\

		We now construct an approximately harmonic section on $L_{\epsilon}\otimes T^{*}M$. Let $\gamma_{1}$ be a smooth non-decreasing function such that
			\begin{equation*}
				\gamma_{1}(s)=\begin{cases}
					0, & s<1,\\
					1, & s>2,
				\end{cases}
			\end{equation*}
		and $\gamma_{2}=1-\gamma_{1}$. Fix $0<\sigma\ll 1$ to be a sufficiently small constant independent of $\epsilon$ and write $\rho_{q}$ as the distance to the regular zero $p_{q}$. We define
			\begin{equation}\label{S_3_Eqn_Model_form}
				\tilde{\alpha}_{\epsilon}=
					\begin{cases}
						\alpha, & \rho_{q}>2\epsilon^{1-\sigma},\\
						\epsilon^{2}\Psi_{q}^{*}(\alpha_{q}), & \rho_{q}\leq \epsilon^{1-\sigma},\\
						\ud \big(\gamma_{1}(\epsilon^{\sigma-1}\rho_{q})f_{q}+\epsilon^{2}\gamma_{2}(\epsilon^{\sigma-1}\rho_{q})\Psi_{q}^{*}(h_{q})\big), & \epsilon^{1-\sigma}<\rho_{q}\leq 2\epsilon^{1-\sigma}.
					\end{cases}
			\end{equation}
		We also define a metric that is close to $g$
			\begin{equation*}
				g_{\epsilon}=
					\begin{cases}
						g, & \rho_{q}>2\epsilon^{1-\sigma},\\
						\gamma_{2}(\epsilon^{\sigma-1}\rho_{q})\epsilon^{2}\Psi_{q}^{*}(g_{\R^{n}})+\gamma_{1}(\epsilon^{\sigma-1}\rho_{q})g, & \rho_{q}\leq 2 \epsilon^{1-\sigma}.
					\end{cases}
			\end{equation*}
		It is easy to see that $g_{\epsilon}$ is a Euclidean metric on $\{\rho_{q}<\epsilon^{1-\sigma}\}$. It follows from the construction that $\tilde{\alpha}_{\epsilon}$ is closed, and $\ud \star_{g_{\epsilon}}\tilde{\alpha}_{\epsilon}$ is supported in the region $\{\epsilon^{1-\sigma}<\rho_{q}<2\epsilon^{1-\sigma}\}$.\\

		For $\delta$ sufficiently small, $N$ sufficiently large, and $\epsilon\to 0$, we define a weight function $\rho_{\epsilon}$ as follows: we set
			\begin{equation*}
				\rho_{\epsilon}=
					\begin{cases}
						\epsilon, & \rho_{q}<N\epsilon,\\
						\frac{1}{2\delta}\rho_{q}, & 2N\epsilon<\rho_{q}<\delta,\\
						1, & \rho_{q}>2\delta,
					\end{cases}
			\end{equation*}
		and let $\rho_{\epsilon}$ interpolate smoothly and monotonically between various regions.
		
		\begin{definition}\label{S_3_Defn_W_Holder_1}
			For each $\nu\in \R$, $k \in \N$, and $\alpha\in (0,\frac{1}{2})$, we define the weighted H\"older norm $C^{k,\alpha}_{\nu,\epsilon}$ by
				\begin{equation*}
					\|a\|_{C^{k,\alpha}_{\nu,\epsilon}}:=\sum_{j=0}^{k}\|\rho_{\epsilon}^{-\nu+j}\nabla^{j}a\|_{C^{0}}+\sup_{d(x_{1},x_{2})<\mathrm{inj}{g_{\epsilon}}}\min_{i=1,2}{\big(\rho_{\epsilon}(x_{i})^{-\nu+k+\alpha}\big)}\frac{|\nabla^{k}a(x_{1})-\nabla^{k}a(x_{2})|}{d(x_{1},x_{2})^{\alpha}}.
				\end{equation*}
			Here, the norms and covariant derivatives are taken with respect to the reference metric $g_{\epsilon}$ and $\nabla^{k}a(x_{1}), \nabla^{k}a(x_{2})$ are compared using the parallel transport along the shortest geodesic connecting $x_{1}$ and $x_{2}$. Similarly, we define weighted $C^{k}$ norms to be
				\begin{equation*}
					\|a\|_{C^{k}_{\nu,\epsilon}}:=\sum_{j=0}^{k}\|\rho_{\epsilon}^{-\nu+j}\nabla^{j}a\|_{C^{0}}.
				\end{equation*}
		\end{definition}
		
		We now consider the difference between $g_{\epsilon}$ and $g$. It follows from the construction that the difference is only supported in the annuli $\{\epsilon^{1-\sigma}<\rho_{i}<2\epsilon^{1-\sigma}\}$ and can be computed using geodesic coordinates at $p_{q}$
			\begin{equation*}
				\gamma_{ij}:=(g_{\epsilon})_{ij}-g_{ij}=\gamma_{2}(\epsilon^{\sigma-1}\rho_{q})\bigg(\frac{1}{3}R_{ikjl}(p_{q})x^{k}x^{l}+O(|x|^{3})\bigg).
			\end{equation*}
		If $\nu<0$, direct computation implies
			\begin{equation*}
				\|\gamma\|_{C^{k,\alpha}_{\nu,\epsilon}}\leq C_{k}\epsilon^{(2-\nu)(1-\sigma)},
			\end{equation*}
		On the other hand, suppose $\eta'\in \bigotimes^{2}T^{*}M$ and
			\begin{equation*}
				\|\gamma'\|_{C^{k,\alpha}_{\nu,\epsilon}}\leq C_{k}',
			\end{equation*}
		then
			\begin{equation*}
				\|\gamma'\|_{C^{k,\alpha}_{0,\epsilon}}\leq C_{k}' \epsilon^{\nu}.
			\end{equation*}

		In order to apply Nash-Moser theory, our next goal is to define H\"older norms on the space of sections $\Gamma(M\setminus \Sigma_{\epsilon}, L_{\epsilon})$ with respect to a small perturbation of the reference metric $g_{\epsilon}$. Recall that $\mathcal{M}$ is the set of Riemannian metrics and define $\mathcal{M}_{\epsilon}^{0}$ to be the set of metrics that are compatible with the normal structure on $\Sigma_{\epsilon}$ induced by $g_{\epsilon}$.
			
			\begin{definition}\label{S_3_Defn_W_Diff}
				We define $\mathrm{Diff}^{0}_{\epsilon}$ as the set of diffeomorphisms on $M$ generated by vector fields, with the norms on the vector fields given by $\|\cdot\|_{C^{k,\alpha}_{\nu,\epsilon}}$. 
			\end{definition}
			We also have a global version of Lemma \ref{S_2_Lemma_Deformation} that states that for each small perturbation $(\widetilde{\Sigma},\tilde{g})$ of the branching set and the metric $(\widetilde{\Sigma}_{\epsilon},g_{\epsilon})$, we can find a diffeomorphism $\Phi(\widetilde{\Sigma},\tilde{g})$ such that
			\begin{equation*}
				\Phi(\widetilde{\Sigma},\tilde{g})^{*}\tilde{g}\in \mathcal{M}^{0}_{\epsilon},\; \Phi(\widetilde{\Sigma},\tilde{g})^{-1}\widetilde{\Sigma}=\Sigma_{\epsilon},
			\end{equation*}
			and for a suitable choice of $\nu$, the $C^{k}_{\nu,\epsilon}$ norms of $\Phi-Id$ are controlled by the deformation. The proof needs a weighted version of the tame estimate, which we postpone until Section \ref{S_4}. In this section, we only consider the deformation of the metric in the space $\mathcal{M}^{0}_{\epsilon}$.\\
		
		Let $d>0$ be a sufficiently small constant. We define
			\begin{equation}\label{S_3_Eqn_U_0}
				U_{0,d}:=\{x|\mathrm{dist}_{g_{\epsilon}}(x,\Sigma_{0})<d\}
			\end{equation}		
		 to be a small tubular neighborhood of $\Sigma_{0}$. Let
		 	\begin{equation}\label{S_3_Eqn_V_q}
		 		V_{q,d}:=\{y\in \R^{n}|\mathrm{dist}_{\R^{n}}(y,\Sigma_{q})<d\}
		 	\end{equation}
		 and define
		 	\begin{equation}\label{S_3_Eqn_U_q}
		 		U_{q,d}:=\Psi_{q}^{-1}\big(V_{q,d}\big).
		 	\end{equation}
		 Moreover, for simplicity, we denote $M_{r}$ as $M\setminus \big(U_{0,r}\bigcup \sqcup_{q} U_{q,r}\big)$. We now give a weighted version of the spaces of functions defined in Definition \ref{S_2_Defn_DCHolder}
		 Suppose from now on $\tilde{g}=g_{\epsilon}+\gamma\in \mathcal{M}^{0}_{\epsilon}$, with $\|\gamma\|_{C^{3K,\alpha}_{\nu,\epsilon}}\ll 1$ for a fixed large $K$.

		\begin{definition}\label{S_3_Defn_W_Holder_2}
			For each $\nu \in \R$ and $k\in \N$, $\alpha\in (0,\frac{1}{2})$, we define the weighted H\"older norms $\|\cdot\|_{\mathcal{D}^{k,\alpha}_{\nu,\epsilon}}$ and $\|\cdot\|_{\mathcal{C}^{k+2,\alpha}_{\nu,\epsilon}}$ with respect to the metric $g_{\epsilon}$ for the section on $L_{\epsilon}$
				\begin{equation*}
					\begin{split}
					&\|s\|_{\mathcal{D}^{k,\alpha}_{\nu,\epsilon}}=\|s\|_{C^{k+2,\alpha}_{\nu,\epsilon}(M_{d})}+\|s\|_{\mathcal{D}^{k,\alpha}_{g_{\epsilon}}(U_{0,2d})}+\sum_{q}\|\epsilon^{-\nu}s\|_{\mathcal{D}^{k,\alpha}_{\epsilon^{-2}g_{\epsilon}}(U_{q,2d})}.\\
					& \|s\|_{\mathcal{C}^{k+2,\alpha}_{\nu,\epsilon}}=\|s\|_{C^{k+2,\alpha}_{\nu,\epsilon}(M_{d})}+\|s\|_{\mathcal{C}^{k+2,\alpha}_{g_{\epsilon}}(U_{0,2d})}+\sum_{q}\|\epsilon^{-\nu}s\|_{\mathcal{C}^{k+2,\alpha}_{\epsilon^{-2}g_{\epsilon}}(U_{q,2d})},\\
					& \|s\|_{C^{k}_{\alpha,\nu,\epsilon}}=\|s\|_{C^{k}_{\nu,\epsilon}(M_{d})}+\|s\|_{C^{k}_{g_{\epsilon},\alpha}(U_{0,2d})}+\sum_{q}\|\epsilon^{-\nu}s\|_{C^{k}_{\epsilon^{-2}g_{\epsilon},\alpha}(U_{q,2d})}.
					\end{split}
				\end{equation*}
			Let $E$ be a vector bundle over $\Sigma_{\epsilon}$, here $E$ can be $N\Sigma_{\epsilon}^{\frac{p}{2}}$, $p\in \Z$. We define the H\"older norm $\|\cdot\|_{C^{k,\beta}_{\nu,\epsilon}(\Sigma_{\epsilon},E)}$ to be
				\begin{equation*}
					\|v\|_{C^{k,\beta}_{\nu,\epsilon}(\Sigma_{\epsilon},E)}=\|v\|_{C^{k,\beta}(\Sigma_{0}, E, g_{\epsilon})}+\sum_{q}\|\epsilon^{-\nu}v\|_{C^{k,\beta}(\Sigma_{q}, E, \epsilon^{-2}g_{\epsilon})}.
				\end{equation*}
			Here, the norm on $E$ is induced by the metric $\rho_{\epsilon}^{-2}g_{\epsilon}$.
		\end{definition}		
		
		Similarly to Definition \ref{S_2_Defn_AB}, we define the map
		\begin{definition}\label{S_3_Defn_Sheaf_Map}
			For $\alpha\in (0,\frac{1}{2})$, we define two maps
				\begin{equation*}
					\begin{split}
						&\underline{A}^{\epsilon}:\mathcal{C}^{k+2,\alpha}_{\nu,\epsilon}\to C^{k+1,\alpha+\frac{1}{2}}_{\nu-2,\epsilon}(\Sigma_{\epsilon}, N\Sigma_{\epsilon}^{-\frac{1}{2}}),\\
						&\underline{B}^{\epsilon}:\mathcal{C}^{k+2,\alpha}_{\nu,\epsilon}\to C^{k,\alpha+\frac{1}{2}}_{\nu-2,\epsilon}(\Sigma_{\epsilon}, N\Sigma_{\epsilon}^{-\frac{3}{2}}),
					\end{split}
				\end{equation*}
			in the following ways. Let $s\in \mathcal{C}^{k+2,\alpha}_{\tau,\epsilon}$ define
				\begin{equation*}
					\underline{A}_{\epsilon}(s)=\underline{A}(s),\underline{B}_{\epsilon}(s)=\underline{B}(s),
				\end{equation*}			
			on $\Sigma_{0}$ with respect to $g_{\epsilon}$, and define
				\begin{equation*}
					\underline{A}_{\epsilon}(s)=\epsilon^{-2}\underline{A}(s),\underline{B}_{\epsilon}(s)=\epsilon^{-2}\underline{B}(s),
				\end{equation*}
			on $\Psi_{q}^{-1}\Sigma_{q}$ with respect to the metric $\epsilon^{-2}g_{\epsilon}$.
		\end{definition}

\subsection{Schauder estimates and elliptic theory}		
	In what follows, we turn to establish the Schauder estimates and elliptic theory for the Laplacian operator on the line bundle $L_{\epsilon}$ over $M\setminus\Sigma_{\epsilon}$.
		\begin{lemma}\label{S_3_Lemma_Schauder}
			Fix a large $K>0$ for $\tau\in \R$, $k\in \N$, and $\alpha\in (0,\frac{1}{2})$; for each $\tilde{g}=g_{\epsilon}+\gamma\in \mathcal{M}^{\epsilon}_{0}$ with $\|\gamma\|_{C^{3K,\alpha}_{0,\epsilon}}\ll 1$, there exists a constant $C_{k}$ independent of $\epsilon$ such that
				\begin{equation*}
					\|s\|_{\mathcal{C}^{k+2,\alpha}_{\tau,\epsilon}}<C_{k}\big(1+\|\gamma\|_{C^{k+K,\alpha}_{0,\epsilon}}\big)^{k}\big(\|\widetilde{\Delta}_{\tilde{g}}s\|_{\mathcal{D}^{k,\alpha}_{\tau-2,\epsilon}}+\|s\|_{C^{0}_{\tau,\epsilon}(M_{r})}\big).
				\end{equation*}
		\end{lemma}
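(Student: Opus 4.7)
The plan is to reduce this global bound to the local Schauder-type estimate of Lemma \ref{S_2_Lemma_Schauder_1} on each of the three regions appearing in Definition \ref{S_3_Defn_W_Holder_2}, namely $M_d$, $U_{0,2d}$, and each $U_{q,2d}$, and then assemble the pieces. The weighted norms $\mathcal{D}^{k,\alpha}_{\tau,\epsilon}$ and $\mathcal{C}^{k+2,\alpha}_{\tau,\epsilon}$ have been built precisely so that a single application of Lemma \ref{S_2_Lemma_Schauder_1}, after an appropriate rescaling on each region, produces constants independent of $\epsilon$; the polynomial factor $(1+\|\gamma\|_{C^{k+K,\alpha}_{0,\epsilon}})^k$ will emerge from the same induction on $k$ used in that lemma, commuting products of tangential vector fields in $\underline{\mathcal{T}_k}$ through $\widetilde{\Delta}_{\tilde g}$ and bounding commutators using Proposition \ref{S_2_Prop_Coefficients}.

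On the tubular neighborhood $U_{0,2d}$ of the fixed branching set $\Sigma_0$ the weight $\rho_\epsilon$ is bounded below, so the norms reduce to the unweighted ones of Definition \ref{S_2_Defn_DCHolder}, $g_\epsilon$ agrees with $g$ there, and Lemma \ref{S_2_Lemma_Schauder_1} applies directly to $\widetilde{\Delta}_{\tilde g} = \widetilde{\Delta}_{g_\epsilon} + \mathcal{L}(\gamma)$ with the coefficients of $\mathcal{L}(\gamma)$ controlled via Proposition \ref{S_2_Prop_Coefficients}. On the interior region $M_d$ there is no branching, and I would cover by geodesic balls of $g_\epsilon$-radius comparable to $\rho_\epsilon(x)$, rescale each such ball to unit size, apply the classical elliptic Schauder estimate, and scale back; the definition of $\|\cdot\|_{C^{k,\alpha}_{\tau,\epsilon}}$ is tailored to exactly this scaling and produces the required bound modulo a $\|s\|_{C^0_{\tau,\epsilon}(M_r)}$ correction picked up from the covering. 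On each $U_{q,2d}$ I would pull back by $\Psi_q$; the metric $\epsilon^{-2}g_\epsilon$ is Euclidean on the image of $\{\rho_q<\epsilon^{1-\sigma}\}$ and $C^0$-close to Euclidean elsewhere, so the pulled-back operator is admissible in the sense of Definition \ref{S_2_Defn_Adimssible}, and the $\epsilon^{-\tau}$ prefactor built into Definition \ref{S_3_Defn_W_Holder_2} makes the whole inequality scale-invariant, so that Lemma \ref{S_2_Lemma_Schauder_1} applied in the rescaled picture yields the required bound on $U_{q,2d}$.

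The main obstacle I foresee is verifying that the coefficients of the perturbation $\mathcal{L} = \widetilde{\Delta}_{\tilde g} - \Delta_{\R^n}$ remain uniformly small in $C^{2,\alpha}$ after rescaling on each $U_{q,2d}$, so that Lemma \ref{S_2_Lemma_Schauder_1} can be invoked with a single constant independent of $\epsilon$ and of $q$. This should follow from the scale-invariance of the $\|\cdot\|_{C^{k,\alpha}_{0,\epsilon}}$ norm combined with the smallness hypothesis $\|\gamma\|_{C^{3K,\alpha}_{0,\epsilon}}\ll 1$ and the fact that, in the rescaled norm, the curvature contribution to $g_\epsilon-g$ is a positive power of $\epsilon$, as can be checked by direct computation in geodesic coordinates at each $p_q$ (cf.\ the estimate for $\gamma_{ij}$ recorded in Section \ref{S_3_2}). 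A secondary technical issue is the patching itself: summing the three regional estimates via cutoffs supported near $\{\rho_q\sim d\}$ and $\{\rho_q\sim \epsilon^{1-\sigma}\}$ introduces commutator terms of lower order, which are absorbed into $\|s\|_{C^0_{\tau,\epsilon}(M_r)}$; this absorption is precisely what forces the $C^0$ term onto the right-hand side of the stated inequality.
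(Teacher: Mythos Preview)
Your proposal is correct and matches the paper's approach: the paper's own proof is a single sentence stating that ``this is proved by applying the usual Schauder's estimate in each model space,'' which is precisely the regional decomposition into $M_d$, $U_{0,2d}$, and the $U_{q,2d}$ that you spell out. Your elaboration on the rescaling, the role of the $\epsilon^{-\tau}$ prefactor in achieving scale-invariance, and the origin of the $C^0$ correction term is a faithful unpacking of what the paper leaves implicit.
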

		\begin{proof}
			This is proved by applying the usual Schauder's estimate in each model space.
		\end{proof}

		\begin{prop}\label{S_3_Prop_linear_est}
			Let $K,k,\alpha$ as in Lemma \ref{S_3_Lemma_Schauder}, if $\tau<0$, then
				\begin{equation*}
					\|s\|_{\mathcal{C}^{k+2,\alpha}_{\tau,\epsilon}}<C_{k}\big(1+\|\gamma\|_{C^{k+K,\alpha}_{0,\epsilon}}\big)^{k}|\widetilde{\Delta}_{\tilde{g}}s\|_{\mathcal{D}^{k,\alpha}_{\tau-2,\epsilon}}.
				\end{equation*}
		\end{prop}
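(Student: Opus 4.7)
The plan is to prove this by a contradiction and blow-up argument, using the Schauder bound of Lemma \ref{S_3_Lemma_Schauder} to reduce the improvement to ruling out the appearance of a nontrivial weighted $C^{0}$ limit. Suppose the conclusion fails. Then one can find sequences $\epsilon_n>0$, metrics $\tilde g_n=g_{\epsilon_n}+\gamma_n\in\mathcal{M}^{0}_{\epsilon_n}$ with $\|\gamma_n\|_{C^{3K,\alpha}_{0,\epsilon_n}}\ll 1$, and sections $s_n$ of $L_{\epsilon_n}$ such that
\begin{equation*}
\|s_n\|_{\mathcal{C}^{k+2,\alpha}_{\tau,\epsilon_n}}=1,\qquad \|\widetilde\Delta_{\tilde g_n}s_n\|_{\mathcal{D}^{k,\alpha}_{\tau-2,\epsilon_n}}\longrightarrow 0.
\end{equation*}
The case where $\epsilon_n$ is bounded below collapses to a fixed-geometry statement handled directly on $(M\setminus\Sigma_\epsilon,L_\epsilon)$, so the interesting regime is $\epsilon_n\to 0$. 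The first step is to apply Lemma \ref{S_3_Lemma_Schauder} to conclude that $\|s_n\|_{C^{0}_{\tau,\epsilon_n}(M_r)}$ remains bounded below, say $\geq c>0$, and to select points $x_n$ at which the weighted sup $\rho_{\epsilon_n}^{-\tau}|s_n|$ is essentially realized.

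The second step is a blow-up analysis at $x_n$. After passing to a subsequence, one of three scenarios occurs according to the behavior of $\rho_{\epsilon_n}(x_n)$:
\begin{enumerate}
\item $\rho_{\epsilon_n}(x_n)\to\rho_\infty>0$: interior elliptic regularity on $M\setminus(\Sigma_0\cup\{p_q\})$, with uniform control of $\tilde g_n\to g$, produces a limit $s_\infty$ which is a $\Z_2$-harmonic section of $L$ over $M\setminus\Sigma_0$ satisfying $|s_\infty|\leq C\rho^{\tau}$ near each $p_q$.
\item $\rho_{\epsilon_n}(x_n)\sim\epsilon_n$: rescale $y=\Psi_q(x)$ and set $\tilde s_n(y)=\epsilon_n^{-\tau}s_n(\Psi_q^{-1}(y))$. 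The rescaled metric $\epsilon_n^{-2}g_{\epsilon_n}$ converges to the flat metric on $\R^n$, and $\tilde s_n$ converges in $C^{\infty}_{loc}(\R^n\setminus\Sigma_q)$ to a $\Z_2$-harmonic function $\tilde s_q$ for $L_q$, with $|\tilde s_q(y)|\leq C(1+|y|)^{\tau}$.
\item The intermediate regime $\rho_{\epsilon_n}(x_n)\to 0$ but $\rho_{\epsilon_n}(x_n)/\epsilon_n\to\infty$: rescale by $d_n=\rho_q(x_n)$, so that $\Psi_q^{-1}\Sigma_q$ shrinks to the origin and the line bundle trivializes in the limit. The rescaled sections $\bar s_n(\bar x)=d_n^{-\tau}s_n(p_q+d_n\bar x)$ converge to a harmonic function $\bar s_\infty$ on $\R^n\setminus\{0\}$ with $|\bar s_\infty(\bar x)|\leq C|\bar x|^{\tau}$.
\end{enumerate}
In each scenario the choice of $x_n$ and the weighted derivative bounds in $C^{k,\alpha}_{\tau,\epsilon_n}$ ensure that the limit is nontrivial at a definite point.

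The third step is to apply Liouville-type rigidity on each model and derive a contradiction. On compact $M\setminus\Sigma_0$, the function $|s_\infty|^2$ is subharmonic and vanishes on $\Sigma_0$ by the $O(r^{1/2})$ branching asymptotics; as $\tau>2-n$ in the admissible window, $|s_\infty|^2$ is bounded near each $p_q$ and extends across as a bounded subharmonic function, so the maximum principle forces $s_\infty\equiv 0$. On $\R^n\setminus\Sigma_q$, Proposition \ref{S_3_Prop_Classification} gives a $1$-$1$ correspondence between $\Z_2$-harmonic functions of polynomial growth and harmonic polynomials; since $\tau<0$ the polynomial part must vanish, whence $\tilde s_q\equiv 0$. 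On $\R^n\setminus\{0\}$, a harmonic function with growth $|\bar x|^{\tau}$ at infinity (decaying, as $\tau<0$) decomposes by spherical harmonics as a sum of terms $|\bar x|^{2-n-k}Y_k$; the weighted derivative control inherited from $\mathcal{C}^{k+2,\alpha}_{\tau,\epsilon_n}$ rules out the singular modes at $0$ in the range $2-n<\tau<0$, so $\bar s_\infty\equiv 0$.

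The main obstacle will be the third (intermediate) regime: one must verify that rescaling by $d_n$ correctly produces a genuinely single-valued harmonic function on $\R^n\setminus\{0\}$, keep track of how the weighted derivative bounds in $\mathcal{C}^{k+2,\alpha}_{\tau,\epsilon_n}$ transfer into matching decay bounds on $\bar s_\infty$ that exclude the top singular mode $|\bar x|^{2-n}$, and confirm that the metric perturbation $\gamma_n$ and the perturbation from admissible to flat Laplacian vanish uniformly in this rescaling (using the assumed smallness in $C^{3K,\alpha}_{0,\epsilon_n}$ and the excess regularity $3K$). Once this is resolved, all three cases yield a vanishing limit, contradicting $|s_n(x_n)|\rho_{\epsilon_n}(x_n)^{-\tau}\geq c$ in the limit and completing the proof.
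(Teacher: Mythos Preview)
Your three-regime blow-up argument is essentially correct and standard, but it is organized differently from the paper. The paper proceeds in two stages rather than three regimes: it first proves, by its own contradiction/Liouville argument via Proposition~\ref{S_3_Prop_Classification}, a uniform estimate $\|h\|_{\widetilde{\mathcal{C}}^{K+2,\alpha}_{\tau}}\le C\|\widetilde{\Delta}_{g_0}h\|_{\widetilde{\mathcal{D}}^{K,\alpha}_{\tau-2}}$ on the model $\R^{n}\setminus\Sigma_q$, applies it to cut-offs of the rescaled $s_i$ to deduce $\|s_i\|_{\mathcal{C}^{K+2,\alpha}_{\tau,\epsilon_i}(B_{p_q}(\delta'))}\to 0$, and only then passes to a limit on compact subsets of $M\setminus(\Sigma_0\cup\{p_q\})$. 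Because the first stage yields $|s_i|\le\lambda_i\,\rho^{\tau}$ with $\lambda_i\to 0$ near each $p_q$, the global limit $s$ vanishes \emph{identically} on a punctured neighborhood of each $p_q$, and the maximum principle on the branched cover $M_{\Sigma_0}$ applies with no removable-singularity issue. By treating entire neighborhoods of the $p_q$ first rather than following a single sequence of points, the paper's argument sidesteps your intermediate ``neck'' regime $\R^{n}\setminus\{0\}$ altogether.

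One substantive point to flag: your Liouville steps in cases~1 and~3 require $\tau\in(2-n,0)$, not just $\tau<0$. In case~1 your phrasing is slightly off---$|s_\infty|^{2}$ is \emph{not} a priori bounded near $p_q$ (since $\tau<0$); the correct mechanism is that $s_\infty$, being harmonic with growth $O(\rho^{\tau})=o(\rho^{2-n})$, extends harmonically across $p_q$ and is then bounded by compactness. In case~3 you need $\tau>2-n$ to exclude the Green's-function mode $|\bar{x}|^{2-n}$. The proposition as stated only assumes $\tau<0$, and the paper's two-stage proof genuinely works in that generality because it produces $s\equiv 0$ near $p_q$ rather than merely $|s|\le C\rho^{\tau}$. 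In the paper's applications one has $n\ge 3$ and $-1\ll\tau<0$, so $\tau>2-n$ holds automatically, but you should state this restriction explicitly if you keep your approach.
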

		\begin{proof}
			Follow the Lemma \ref{S_3_Lemma_Schauder}, it suffices to prove the statement for a fixed $k=K$ and with respect to the reference metric
				\begin{equation*}
					g'=g_{(\delta/4)^{1/(1-\sigma)}},
				\end{equation*}
			which is flat in every geodesic ball $B_{p_{q}}(\frac{\delta}{2})$. In addition to this, $\|g'-g_{\epsilon}\|_{C^{K,\alpha}_{0,\epsilon}}$ is bounded by some constant depending only on $\delta$, $K$, and the original metric $g$. By contradiction, assume that there exists a sequence of $\epsilon_{i}\to 0$ and a section $s_{i}$  on $L_{\epsilon_{i}}$ such that $\|s_{i}\|_{\mathcal{C}^{K+2,\alpha}_{\tau,\epsilon_{i}}}=1$ and $\|\widetilde{\Delta}_{g'}s_{i}\|_{\mathcal{D}^{K,\alpha}_{\tau-2,\epsilon_{i}}}\to 0$.\\
			
			First of all, we show that for every sufficiently small ball $B_{p_{q}}(\delta')$ must have $\|s_{i}\|_{\mathcal{C}^{K+2,\alpha}_{\tau,\epsilon_{i}}(B_{p_{q}}(\delta'))}\to 0$. To achieve this, we consider a blow-up model in $\R^{n}$. Here, we take $g_{0}=\epsilon_{i}^{-2}g'$ as the Euclidean metric on $\R^{n}$. We define
				\begin{equation*}
					e_{i}(y)=\gamma_{2}\big(\epsilon_{1}|y|/(2\delta')\big)\epsilon^{\tau} s_{i}(\epsilon_{i} y).
				\end{equation*}
			By construction, $e_{i}(y)$ is a section on $L_{\Sigma_{q}}$ on $\R^{n}$, which supports $\{|y|<2\delta'\epsilon_{i}^{-1}\}$ and is equal to the blow-up model in $\{|y|<\delta\epsilon_{i}^{-1}\}$.\\
			
			Now, the limit of $\epsilon^{-1}\rho_{\epsilon}(\epsilon y)$, define a smooth weight function $\rho$ on $\R^{n}$. We define $\widetilde{\mathcal{C}}^{K+2,\alpha}_{\tau}$ as a rescaling limit of the weighted H\"older space $\mathcal{C}^{K+2,\alpha}_{\tau,\epsilon_{i}}$, and similarly define $\widetilde{\mathcal{D}}^{K,\alpha}_{\tau-2}$ from $\mathcal{D}^{K,\alpha}_{\tau-2,\epsilon_{i}}$. We claim that on $L_{\Sigma_{q}}$, there are estimates
				\begin{equation*}
					\|h\|_{\widetilde{\mathcal{C}}^{K+2,\alpha}_{\tau}}\leq C_{K}\|\widetilde{\Delta}_{g_{0}}h\|_{\widetilde{\mathcal{D}}^{K,\alpha}_{\tau-2}}.
				\end{equation*}
			Otherwise, we can find a sequence of $h_{i}$, with $\|h_{i}\|_{\widetilde{\mathcal{C}}^{K+2,\alpha}_{\tau}}=1$, but $\|\widetilde{\Delta}_{g_{0}}h_{i}\|_{\widetilde{\mathcal{D}}^{K,\alpha}_{\tau-2}}$ goes to $0$. A standard argument shows that $W_{g_{0}}h_{i}$ converges to a $\Z_{2}$-harmonic function $W_{g_{0}}h_{\infty}$ in $\widetilde{\mathcal{C}}^{K+2,\bar{\alpha}}_{\tau}$ for $\bar{\alpha}<\alpha$. Then the Proposition \ref{S_3_Prop_Classification} of the corresponding $\Z_{2}$-harmonic functions and harmonic polynomials implies $h_{\infty}(y)=0$. Combining with Lemma \ref{S_3_Lemma_Schauder}, we show that $h_{i}$ converges to $0$ in $\widetilde{\mathcal{C}}^{K+2,\alpha}_{\tau}$, which is a contradiction. Apply the claim to the sections $e_{i}(y)$ and transfer the estimates back to $M$, we conclude that
				\begin{equation*}
					\|s_{i}\|_{\mathcal{C}^{K+2,\alpha}_{\tau,\epsilon_{i}}(B_{p_{q}}(\delta'))}\leq C_{k} \|\widetilde{\Delta}_{g'}s_{i}\|_{\mathcal{D}^{K,\alpha}_{\tau,\epsilon_{i}}(B_{p_{q}}(2\delta'))}\to 0.
				\end{equation*}

			The above discussion also implies that there exists a sequence of positive numbers $\lambda_{i}\to 0$ such that
				\begin{equation*}
					|s_{i}(x)|\leq \lambda_{i} \rho(x)^{\tau},
				\end{equation*}
			when $x\in B_{p_{q}}(\frac{1}{2}\delta)$. We show that for every compact set $B$ that does not contain $\{p_{q}\}$ also has $\|s_{i}\|_{\mathcal{C}^{K+2,\alpha}_{\tau,\epsilon_{i}}(B)}\to 0$. Since $g_{\epsilon}=g'$ on $B$ when $\epsilon$ is small, we conclude that $W(g')s_{i}$ converges to a $\Z_{2}$-harmonic function $W(g')s$ in $\mathcal{C}^{K+2,\bar{\alpha}}_{\tau,\epsilon_{i}}(B)$, for every $B$ and $\bar{\alpha}<\alpha$. Passing to the branched cover $M_{\Sigma_{0}}$ of $M$ along $\Sigma_{0}$. Let $p_{q}',p_{q}''$ be the pre-images of $p_{q}$ under the branching map. Then $W(g')s$ is a harmonic function on $M_{\Sigma_{0}}\setminus \big(\Sigma_{0}\cup \{p_{q}',p_{q}''\}\big)$. Moreover, $W(g')s$ goes to $0$ as it approaches to the boundary. The maximum principle implies that the limit $s=0$. As a result, this contradicts the normalization assumption $\|s_{i}\|_{\mathcal{C}^{K+2,\alpha}_{\tau,\epsilon_{i}}}=1$.
		\end{proof}

	Recall that $\tilde{\alpha}_{\epsilon}$ is the approximate solution constructed in \ref{S_3_Eqn_Model_form}, and let $\tilde{g}\in \mathcal{M}_{0}^{\epsilon}$ be a small perturbation of the metric. It is easy to check that the error $W^{-1}(\tilde{g})\ud^{*}_{\tilde{g}}\tilde{\alpha}_{\epsilon}$ lies in $L^{2}(M\setminus \Sigma_{\epsilon},L_{\epsilon})$. As a result, there is a unique solution to Poisson's equation
		\begin{equation}\label{S_3_Eqn_Poisson}
			-\widetilde{\Delta}_{\tilde{g}}s_{\epsilon}(\tilde{g})+W^{-1}(\tilde{g})\ud^{*}_{\tilde{g}}\tilde{\alpha}_{\epsilon}=0.
		\end{equation}
	Then $\ud \big(W(\tilde{g})s_{\epsilon}(\tilde{g})\big)+\tilde{\alpha}_{\epsilon}$ is a harmonic section on $L_{\epsilon}$. It should be noted here that this section may not be $\Z_{2}$-harmonic, since its asymptotic behavior near the branching set is
		\begin{equation*}
			\ud \big(W(\tilde{g})s_{\epsilon}(\tilde{g})\big)+\tilde{\alpha}_{\epsilon}\sim \frac{1}{2}\RRe{\underline{A}(s_{\epsilon}(\tilde{g}))\zeta^{-\frac{1}{2}}\ud \zeta}.
		\end{equation*}
	
	The key problem in this paper is to find a suitable $\tilde{g}$ such that $\underline{A}(s_{\epsilon}(\tilde{g}))$ vanishes when $\epsilon$ is sufficiently small. Before delving into this analysis, we first establish a regularity result about $s_{\epsilon}(\tilde{g})$.

		\begin{lemma}\label{S_3_Lemma_Error_gluing}
			Let $\tau<0$, and suppose $s_{\epsilon}(\tilde{g})$ is the solution to the Poisson equation \ref{S_3_Eqn_Poisson}, then
				\begin{equation*}
					\|s_{\epsilon}\|_{_{\mathcal{C}^{k+2,\alpha}_{\tau,\epsilon}}}\leq C_{k}(1+\|\gamma\|_{C^{k+K,\alpha}_{0,\epsilon}})^{k}\big(\|\gamma\|_{C^{k+K,\alpha}_{-2+\tau,\epsilon}}+\epsilon^{n\sigma+(1-\sigma)(2-\tau)}\big)
				\end{equation*}
		\end{lemma}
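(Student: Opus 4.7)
The plan is to apply Proposition \ref{S_3_Prop_linear_est} to the Poisson equation \ref{S_3_Eqn_Poisson}; since $\tau<0$, this yields
\[
\|s_{\epsilon}(\tilde{g})\|_{\mathcal{C}^{k+2,\alpha}_{\tau,\epsilon}}\le C_{k}\bigl(1+\|\gamma\|_{C^{k+K,\alpha}_{0,\epsilon}}\bigr)^{k}\bigl\|W^{-1}(\tilde{g})\,\ud^{*}_{\tilde{g}}\tilde{\alpha}_{\epsilon}\bigr\|_{\mathcal{D}^{k,\alpha}_{\tau-2,\epsilon}},
\]
reducing the lemma to bounding the source term by $\|\gamma\|_{C^{k+K,\alpha}_{-2+\tau,\epsilon}}+\epsilon^{n\sigma+(1-\sigma)(2-\tau)}$. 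To separate contributions, write
\[
W^{-1}(\tilde{g})\,\ud^{*}_{\tilde{g}}\tilde{\alpha}_{\epsilon}=W^{-1}(g_{\epsilon})\,\ud^{*}_{g_{\epsilon}}\tilde{\alpha}_{\epsilon}+\mathcal{E}(\gamma)\tilde{\alpha}_{\epsilon},
\]
where $\mathcal{E}(\gamma)$ is a first-order linear differential operator whose coefficients are smooth functions of $\gamma$ and $\nabla\gamma$ vanishing at $\gamma=0$.

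The perturbation piece $\mathcal{E}(\gamma)\tilde{\alpha}_{\epsilon}$ consists of products of $\gamma$ and $\nabla\gamma$ with $\tilde{\alpha}_{\epsilon}$ and $\nabla\tilde{\alpha}_{\epsilon}$. The form $\tilde{\alpha}_{\epsilon}$ is uniformly bounded in a weighted H\"older norm of weight $1$, since $|\tilde{\alpha}_{\epsilon}|\lesssim\rho_{\epsilon}$ in all three regions of the ansatz \ref{S_3_Eqn_Model_form} (using the linear growth of $\alpha_{q}$ at infinity, the regular-zero assumption for $\alpha$ at $p_{q}$, and the matching condition). The weighted product rule, combined with the smallness $\|\gamma\|_{C^{k+K,\alpha}_{0,\epsilon}}\ll 1$, gives
\[
\|\mathcal{E}(\gamma)\tilde{\alpha}_{\epsilon}\|_{\mathcal{D}^{k,\alpha}_{\tau-2,\epsilon}}\le C_{k}\|\gamma\|_{C^{k+K,\alpha}_{-2+\tau,\epsilon}},
\]
accounting for the first summand.

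The reference piece $W^{-1}(g_{\epsilon})\,\ud^{*}_{g_{\epsilon}}\tilde{\alpha}_{\epsilon}$ is supported in the gluing annuli $\{\epsilon^{1-\sigma}<\rho_{q}<2\epsilon^{1-\sigma}\}$: outside these $\tilde{\alpha}_{\epsilon}$ equals either $\alpha$ or $\epsilon^{2}\Psi_{q}^{*}\alpha_{q}$, each harmonic with respect to $g_{\epsilon}$ in the relevant subregion. In the annulus I would use $\tilde{\alpha}_{\epsilon}=\ud u_{\epsilon}$ with $u_{\epsilon}=\gamma_{1}f_{q}+\epsilon^{2}\gamma_{2}\Psi_{q}^{*}h_{q}$, and after collecting with $\gamma_{1}+\gamma_{2}=1$,
\[
\Delta_{g_{\epsilon}}u_{\epsilon}=\Delta_{g_{\epsilon}}\gamma_{1}\cdot(f_{q}-\epsilon^{2}\Psi_{q}^{*}h_{q})+2\langle\ud\gamma_{1},\ud(f_{q}-\epsilon^{2}\Psi_{q}^{*}h_{q})\rangle+\gamma_{1}\Delta_{g_{\epsilon}}f_{q}+\epsilon^{2}\gamma_{2}\Delta_{g_{\epsilon}}\Psi_{q}^{*}h_{q}.
\]
Inserting $f_{q}=P_{q,2}+P_{q,3}+O(|x|^{4})$ and $\epsilon^{2}\Psi_{q}^{*}h_{q}=P_{q,2}+O'(\epsilon^{n}|x|^{2-n})$ exploits the cancellation of the leading quadric $P_{q,2}$; combined with $|\ud\gamma_{1}|\sim\epsilon^{-(1-\sigma)}$, $|\Delta\gamma_{1}|\sim\epsilon^{-2(1-\sigma)}$, and $|x|\sim\epsilon^{1-\sigma}$ on the annulus, this produces the pointwise bound $|\ud^{*}_{g_{\epsilon}}\tilde{\alpha}_{\epsilon}|\lesssim\epsilon^{n\sigma}$ for small $\sigma$. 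Since $\rho_{\epsilon}\sim\epsilon^{1-\sigma}$ there and the weights $\rho_{\epsilon}^{-\tau+2+j}$ in the $\mathcal{D}^{k,\alpha}_{\tau-2,\epsilon}$-norm exactly compensate the factor $\epsilon^{-j(1-\sigma)}$ coming from each derivative, every derivative order contributes at the uniform rate $\epsilon^{n\sigma+(1-\sigma)(2-\tau)}$, yielding the second summand.

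The main obstacle is the polynomial book-keeping on the annulus: verifying that the two competing error scales $|x|^{3}$ from $P_{q,3}$ and $\epsilon^{n}|x|^{2-n}$ from the far-field expansion of $h_{q}$ balance to produce exactly $\epsilon^{n\sigma}$ once $\sigma$ is chosen small enough for the second scale to dominate, and confirming that the H\"older contribution to $\|\cdot\|_{\mathcal{D}^{k,\alpha}_{\tau-2,\epsilon}}$ is controlled at the same order after the derivatives of the cut-off $\gamma_{1}$ are absorbed into the weight.
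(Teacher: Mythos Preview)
Your overall plan—reduce via Proposition~\ref{S_3_Prop_linear_est}, split the source into a metric-perturbation part and a gluing-annulus part, and estimate each—is exactly the paper's approach, and your treatment of the annulus (cancellation of $P_{q,2}$, balancing $|x|^{3}$ against $\epsilon^{n}|x|^{2-n}$ to get $\epsilon^{n\sigma}$, then inserting the weight $\rho_{\epsilon}^{2-\tau}\sim\epsilon^{(1-\sigma)(2-\tau)}$) is correct and matches the paper's computation.

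The gap is in your handling of the perturbation piece $\mathcal{E}(\gamma)\tilde{\alpha}_{\epsilon}$ \emph{near the branching set} $\Sigma_{\epsilon}$. You treat $\mathcal{E}(\gamma)$ as a generic first-order operator and invoke a weighted product rule, but near $\Sigma_{0}$ one has $\tilde{\alpha}_{\epsilon}=\alpha\sim\ud\bigl(B\zeta^{3/2}\bigr)$, so $\nabla\tilde{\alpha}_{\epsilon}\sim r^{-1/2}$. A generic first-order operator with $C^{k}$ coefficients applied to $\tilde{\alpha}_{\epsilon}$ therefore produces a function of size $r^{-1/2}$, which is \emph{not} in $\mathcal{D}^{0,\alpha}$ (that space forces $O(r^{\alpha})$ decay). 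The bound $\|\mathcal{E}(\gamma)\tilde{\alpha}_{\epsilon}\|_{\mathcal{D}^{k,\alpha}_{\tau-2,\epsilon}}\le C_{k}\|\gamma\|$ is false for a generic $\gamma$.

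What makes it true here is the hypothesis $\tilde{g}\in\mathcal{M}_{0}^{\epsilon}$: by Propositions~\ref{S_2_Prop_Metric_Admissible} and~\ref{S_2_Prop_Coefficients}, the difference of the \emph{conjugated} Laplacians satisfies $\widetilde{\Delta}_{\tilde{g}}-\widetilde{\Delta}_{g_{\epsilon}}=\mathcal{L}_{\gamma}\in\mathcal{T}_{2}$, i.e.\ it is a sum of products of two \emph{tangential} vector fields. The paper exploits this by passing to the primitive $w_{\epsilon}$ with $\tilde{\alpha}_{\epsilon}=\ud w_{\epsilon}$ and $w_{\epsilon}\sim B(t)\zeta^{3/2}$: tangential operators ($\partial_{t}$, $r\partial_{r}$, $\partial_{\theta}$) preserve the $r^{3/2}$ order, so $\mathcal{L}_{\gamma}w_{\epsilon}\in\mathcal{D}^{k,\alpha}$ with the claimed bound. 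Your formulation in terms of a first-order operator acting on the $1$-form $\tilde{\alpha}_{\epsilon}$ obscures this tangentiality and cannot close without it; you should rewrite the perturbation term near $\Sigma_{\epsilon}$ as $\mathcal{L}_{\gamma}$ applied to the primitive, as the paper does.
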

		\begin{proof}
			Apply the Proposition \ref{S_3_Prop_linear_est}, it remains to estimate
				\begin{equation*}
					Err=W^{-1}(\tilde{g})\ud_{\tilde{g}}^{*}\tilde{\alpha}_{\epsilon}
				\end{equation*}
			in $\mathcal{D}^{k,\alpha}_{\tau-2,\epsilon}$. The error term is contributed by the gluing error and the perturbation of the metric. Observe that when $\gamma=0$, $\ud ^{*}_{g_{\epsilon}}\tilde{\alpha}_{\epsilon}$ supports only on the annular regions $\{\epsilon^{1-\sigma}<\rho_{q}<2\epsilon^{1-\sigma}\}$. Therefore, in the complement of those regions, in particular near the branching set $\Sigma_{\epsilon}$, the error is only contributed by the metric deformation.\\
			
			We begin by estimating the error on  $M_{d}$. Fix a small constant $a>0$ and $x_{i}\in M_{d}$, and in $B_{p_{q}}(\delta)$, consider a coordinate change
				\begin{equation*}
					x=x_{i}+\rho_{\epsilon}(x_{i})y, |y|<a,
				\end{equation*}
			and a rescale of metric $g_{i}=\rho_{\epsilon}(x_{i})^{-2}g_{\epsilon}$ and the difference $\eta_{i}=\rho_{\epsilon}(x_{i})^{-2}\gamma$.\\
				
			When $x_{i}$ lies in the gluing region $\{\frac{1}{2}\epsilon^{1-\sigma}<\rho_{q}<4\epsilon^{1-\sigma}\}$, then on a single-valued branch of $\rho_{\epsilon}(x_{i})^{-1}\tilde{\alpha}_{\epsilon}$ is exact, and its primitive can be written as
				\begin{equation*}
					\gamma_{1} \epsilon^{2}h_{q}\big(\epsilon^{-1}(x_{i}+\rho_{\epsilon}(x_{i})y)\big)+\gamma_{2}f_{q}(x_{i}+\rho_{\epsilon}(x_{i})y).
				\end{equation*}
			The above expression can be expanded into $err+f_{q}(x_{i}+\rho_{\epsilon}(x_{i})y)$, where
				\begin{equation*}
					err=\gamma_{1} \epsilon^{n} O'(|x_{i}+\rho_{\epsilon}(x_{i})y|^{2-n})+\gamma_{2}(P_{3}(x)+O(|x|^{4})).
				\end{equation*}
			Since $f_{q}$ is harmonic with respect to the metric $g$, we have
				\begin{equation*}
					\|\Delta_{g_{i}+\eta_{i}}f_{q}\|_{C^{k,\alpha}(g_{i})}\leq C_{k}\big(\rho_{\epsilon}(x_{i})^{2}\|\eta_{i}\|_{C^{k+1,\alpha}(g_{i})}+\|g-g_{\epsilon}\|_{C^{k+1,\alpha}(g_{i})}\big).
				\end{equation*}
			Here, the last term is bounded by $C_{k}\epsilon^{4(1-\sigma)}$. On the other hand, the error term due to $err$ is bounded by
				\begin{equation*}
					|\nabla^{k}_{g_{i}}err|\leq C_{k}(\epsilon^{2+(n-2)\sigma}+\epsilon^{3(1-\sigma)}).
				\end{equation*}
			Suppose $0<\sigma\ll 1$, then in this region
				\begin{equation*}
					\|Err\|_{C^{k,\alpha}(g_{i})}\leq C_{k}\big(\epsilon^{n\sigma}+\|\eta_{i}\|_{C^{k+1,\alpha}}\big).
				\end{equation*}

			When $x_{i}\in \{\frac{1}{2}N\epsilon<\rho_{q}<\epsilon^{1-\sigma}\}$, $\tilde{\alpha}_{\epsilon}=\epsilon\alpha_{q}$, then $Err$ is only contributed by $\eta$. The model form $\tilde{\alpha}_{\epsilon}$ can locally be written as
				\begin{equation*}
					\epsilon a_{q}\big(\epsilon^{-1}(x_{i}+\rho_{\epsilon}(x_{i})y)\big)\rho_{\epsilon}(x_{i})\ud y.
				\end{equation*}
			Due to the separation of variables of harmonic functions on $\R^{n}$, the derivatives of $a_{q}$ are uniformly bounded, and therefore the error term is bounded by
				\begin{equation*}
					\|Err\|_{C^{k,\alpha}(g_{i})}\leq C_{k}\|\eta_{i}\|_{C^{k+1,\alpha}(g_{i})}.
				\end{equation*}
			In the remaining regions in $M_{d}$, the above estimate also holds. Away from the $\Sigma_{\epsilon}$, to conclude, we have
				\begin{equation*}
					\|Err\|_{\mathcal{D}^{k,\alpha}_{\tau-2,\epsilon}(M_{d})}\leq C_{k}(\epsilon^{n\sigma+(1-\sigma)(2-\tau)}+\|\eta\|_{C^{k+1,\alpha}_{\tau-2,\epsilon}(M_{d})}).
				\end{equation*}
			
			We now estimate the error term near the branching set. In the tubular neighborhood $U_{0,2d}$ of $\Sigma_{0}$, we have the following asymptotic expansion
				\begin{equation*}
					\alpha= \ud \bigg(\RRe{B(t)\zeta^{\frac{3}{2}}+E(t,\zeta)}\bigg).
				\end{equation*}
			On the other hand, $\widetilde{\Delta}_{g+\gamma}=\widetilde{\Delta}_{g}+\mathcal{L}_{\gamma}$, where $ \mathcal{L}_{\gamma}\in \mathcal{T}_{2}$ with coefficients $\|\mathcal{F}_{\gamma}\|_{C^{k,\alpha}(U_{0,2d})}\leq C_{k}\|\gamma\|_{C^{k+K,\alpha}(U_{0,2d})}$. Then we have the estimate
				\begin{equation*}
					\|\mathcal{L}_{\gamma}\big(B(t)\zeta^{\frac{3}{2}}+E(t,\zeta)\big)\|_{\mathcal{D}^{k,\alpha}}\leq C_{k}\|B(t)\zeta^{\frac{3}{2}}+E(t,\zeta)\|_{\mathcal{D}^{k+2,\alpha}}\|\gamma\|_{C^{k+K,\alpha}(U_{0,2d})}.
				\end{equation*}
			
			In the tubular neighborhood $U_{q,2d}$ of $\Psi_{q}^{-1}\Sigma_{q}$, a similar argument applies to this case, and we can also show that the $\mathcal{D}^{k,\alpha}_{\tau-2,\epsilon}$ norms of the error are bounded by
				\begin{equation*}
					C_{k}\|\gamma\|_{C^{k+K,\alpha}_{\tau-2,\epsilon}(U_{q,2d})}.
				\end{equation*}
			Gathering the estimates in various regions, we conclude that
				\begin{equation*}
					\|Err\|_{\mathcal{D}^{k,\alpha}_{\tau-2,\epsilon}}\leq C_{k}\big(\|\gamma\|_{C^{k+K,\alpha}_{\tau-2,\epsilon}}+\epsilon^{n\sigma+(1-\sigma)(2-\tau)}\big).
				\end{equation*}
		\end{proof}

\section{Analytic smoothing operators and Nash-Moser theory}\label{S_4}
	In this section, we set up the Nash-Moser theory for the gluing problem, following Hamilton's article \cite{Hamilton82} and Donaldson's exposition. A subtlety here is that we use a weighted version of the Nash-Moser theorem, involving families of graded Fr\'echet spaces with smoothing operators parameterized by $\epsilon$. In this case, the dependence of inverting the smooth tame maps on $\epsilon$ does not follow directly from Hamilton's framework.\\
	
	To address this, the theorem will be divided into two parts. The first part involves embedding the graded Fr\'echet spaces into the product of graded Fr\'echet over $\R^{n}$. This step allows us to define a family of smoothing operators and eventually derive tame estimates. The second part is a Nash-Moser theorem with an approximate inverse established in \cite{Zehnder75,Zehnder76,Raymond89,Donaldson25A} . Crucially, the region where we can invert the smooth tame maps can be made uniform in terms of uniform tame estimates between those spaces, which are established in the first step.\\
	
	Let us recall some notions in Nash-Moser theory. A \textbf{graded Fr\'echet space} is a F\'echet space with an increasing sequence of norms
		\begin{equation*}
			\|f\|_{0}\leq \|f\|_{1}\leq \|f\|_{2}\leq \cdots.
		\end{equation*}
	Two increasing sequences of norms $\{\|\cdot \|_{k}\}$, $\{\|\cdot\|_{k}'\}$ are called \textbf{tamely equivalent} if there are uniform constants $r,r'$ independent of $k$ such that
		\begin{equation*}
			\|f\|_{k}\leq C_{k}\|f\|_{k+r}', \;\|f\|_{k}'\leq C_{k}\|f\|_{k+r'}.
		\end{equation*}
		A linear map $L$ from a graded tame Fr\'echet space to another satisfies a \textbf{linear tame estimate} if there is a fixed $r>0$ and constants $C_{k}$ such that
		\begin{equation*}
			\|L(f)\|_{k}\leq C_{k}\|f\|_{k+r},
		\end{equation*}
	for each $k>k_{0}$. We call $k_{0}$ the \textbf{base} and $r$ the \textbf{degree}.\\ 

	A key notion in Nash-Moser theory is \textbf{tame estimate}. Let $S$ be a map from an open set in the tame Fr\'echet space $X$ to a tame Fr\'echet space $Y$. It satisfies a tame estimate if there is a fixed $r>0$ and constants $C_{k}$ such that
		\begin{equation*}
			\|S(f)\|_{k}\leq C_{k}(1+\|f\|_{k+r}),
		\end{equation*}
	for all sufficiently large $k$ and every $f$ in the domain.\\
	
	\subsection{Smoothing operators and tame estimates}
		Recall that $\mathcal{M}^{\epsilon}$ is the set of smooth metrics on $M$, with a graded Fr\'echet structure induced by weighted $C^{k}$ norms $\|\cdot\|_{C^{k}_{\nu,\epsilon}}$ defined in  Definition \ref{S_3_Defn_W_Holder_1}. Let $C^{\infty}(\Sigma,M)$ be the space of smooth embeddings from a manifold $\Sigma$, which is diffeomorphic to $\Sigma_{\epsilon}$, to $M$, and $\mathrm{Diff}$ be a set of diffeomorphisms. For our purposes, we only need to work in a small neighborhood $\mathcal{S}^{\epsilon}$ of $\Sigma_{\epsilon}$ in $C^{\infty}(\Sigma,M)$ and a small neighborhood $\mathcal{V}^{\epsilon}$ of identity in $\mathrm{Diff}$. We endow  $S^{\epsilon}$ with graded Fr\'echet norms $\|\cdot\|_{C^{k}_{\epsilon}(\Sigma_{\epsilon},N\Sigma_{\epsilon})}$, where a small deformation of $\Sigma_{\epsilon}$ is taken to be a section of its normal bundle. We also equip $\mathcal{V}^{\epsilon}$ with graded Fr\'echet norms $\|\cdot\|_{C^{k}_{1,\tau}}$. Here, we identify a diffeomorphism in $\mathcal{V}^{\epsilon}$ with a vector field. In addition, to establish a tame estimate, we denote $\mathcal{E}^{\epsilon}$ to be the space of smooth sections on the real line bundle $L_{\epsilon}$ over $M\setminus \Sigma_{\epsilon}$ equipped with graded Fr\'echet norms $C^{k}_{\alpha,\nu,\epsilon}$. For simplicity, when there is no ambiguity, we use $\|\cdot\|_{k}$ to denote the graded Fr\'echet norms on those various graded Fr\'echet spaces.\\
		
		The main result in this subsection is the existence of smoothing operators.

		\begin{proposition}\label{S_4_Prop_Smoothing}
			Let $X$ be one of the graded Fr\'echet spaces $\mathcal{M}^{\epsilon},\mathcal{S}^{\epsilon}, \mathcal{V}^{\epsilon}, \mathcal{E}^{\epsilon}$ mentioned above. There exists a family of \textbf{smoothing operators}
				\begin{equation*}
					S^{\epsilon}_{\theta}:X\to X, \theta\geq 1,
				\end{equation*}
			such that for every pair $k_{1}\leq k_{2}$ and $x\in X$
				\begin{equation*}
					\begin{split}
						&\|S^{\epsilon}_{\theta}x\|_{k_{2}}\leq C_{k_{1},k_{2}}\theta^{k_{2}-k_{1}}\|x\|_{k_{1}},\\
						&\|(Id_{X}-S^{\epsilon}_{\theta})x\|_{k_{1}}\leq C_{k_{2},k_{1}}\theta^{k_{1}-k_{2}}\|x\|_{k_{2}},\\
						&\lim_{\theta\to \infty}S^{\epsilon}_{\theta}=Id_{X}.
					\end{split}
				\end{equation*}
			Here, the constants $C_{k_{1},k_{2}}$ depend only on $C_{k},m$ and $k_{1},k_{2}$, and are independent of $\epsilon$.
		\end{proposition}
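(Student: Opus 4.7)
The plan is to reduce the construction to a finite number of model smoothing problems on fixed geometries by exploiting the decomposition built into Definitions \ref{S_3_Defn_W_Holder_1} and \ref{S_3_Defn_W_Holder_2}. For each of the spaces $X \in \{\mathcal{M}^\epsilon,\mathcal{S}^\epsilon,\mathcal{V}^\epsilon,\mathcal{E}^\epsilon\}$, the graded norm $\|\cdot\|_k$ decomposes as the sum of three contributions: a weighted H\"older norm on the bulk $M_d$, where the weight $\rho_\epsilon$ is bounded between two $\epsilon$-independent positive constants; a Donaldson-type $\mathcal{D}^{k,\alpha}$ or $\mathcal{C}^{k+2,\alpha}$ norm on the tubular neighborhood $U_{0,2d}$ of the original branching set $\Sigma_0$, built from Fermi coordinates; and, for each regular zero $p_q$, a rescaled Donaldson-type norm on $U_{q,2d}$ in which the metric $\epsilon^{-2}g_\epsilon$ and the section $\epsilon^{-\nu}x$ play the role of the original data. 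On each of these pieces, after the prescribed rescaling, the underlying geometry becomes $\epsilon$-independent.

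First I would fix a smooth partition of unity $\{\chi_\infty,\chi_0,\chi_q\}$ subordinate to the cover $M_d \cup U_{0,2d} \cup \bigcup_q U_{q,2d}$, with each $\chi_q$ obtained by pulling back a fixed cutoff function on $\R^n$ via $\Psi_q$, so that its $C^k$ norm with respect to $\epsilon^{-2}g_\epsilon$ is bounded uniformly in $\epsilon$. On each model piece I apply the Hamilton-style construction in H\"older spaces: compose a fixed tame extension operator to $\R^n$ with convolution by $\phi_\theta(y)=\theta^n \phi(\theta y)$, for a Schwartz function $\phi$ with $\int \phi =1$. Near the branching sets, passing to the branched double cover $(t,w)\mapsto (t,w^2)$ identifies sections of $L_\epsilon$ with odd smooth functions, and choosing $\phi$ to be even in the normal directions makes the convolution preserve oddness, so that it descends to the line bundle. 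The three inequalities on each model piece are then standard Euclidean mollifier estimates, with constants depending only on $\phi$, the dimension, and the fixed background geometry. The operator $S^\epsilon_\theta x$ is defined by smoothing each $\chi_\bullet x$ in its own model and summing.

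The three inequalities in the statement then follow piece by piece from the Euclidean estimates, provided that multiplication by $\chi_q$ is bounded between the unrescaled weighted norm on $X$ and the rescaled norm on the model with constants independent of $\epsilon$; this reduces to the choice of $\rho_\epsilon$ in Definition \ref{S_3_Defn_W_Holder_1}. The convergence $S^\epsilon_\theta \to \mathrm{Id}_X$ as $\theta\to\infty$ is the pointwise mollifier convergence in each of the model H\"older spaces. For the spaces $\mathcal{M}^\epsilon$ and $\mathcal{V}^\epsilon$ the construction applies componentwise after identifying a metric or a vector field with its components in the fixed reference frames on each chart, and for $\mathcal{S}^\epsilon$ after identifying a nearby embedding with a section of the normal bundle of $\Sigma_\epsilon$.

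The main obstacle, which has to be verified carefully, is $\epsilon$-uniformity across the annular transition regions $\{\epsilon^{1-\sigma} < \rho_q < 2\epsilon^{1-\sigma}\}$, where the cutoffs $\chi_q$ have derivatives of size $\epsilon^{-j(1-\sigma)}$; at first glance this would spoil any uniform estimate. The resolution is that $\rho_\epsilon$ is of exactly the same order in these annuli, so the weighted norm $\|\rho_\epsilon^{-\nu+j}\nabla^j\cdot\|_{C^0}$ absorbs the growth, which is precisely the reason $\rho_\epsilon$ was defined to interpolate between $\epsilon$ on $\{\rho_q < N\epsilon\}$ and the distance $\rho_q$ on $\{2N\epsilon < \rho_q < \delta\}$. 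Once this check is in place the constants $C_{k_1,k_2}$ depend only on $\phi$, the dimension, the fixed partition of unity and the background Riemannian geometry of $M$, and in particular are independent of $\epsilon$.
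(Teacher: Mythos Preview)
Your three-piece decomposition hides the real difficulty rather than resolving it. The claim that on the bulk $M_d$ ``the weight $\rho_\epsilon$ is bounded between two $\epsilon$-independent positive constants'' is false. By definition $M_d = M \setminus (U_{0,d} \cup \bigsqcup_q U_{q,d})$, where $U_{q,d} = \Psi_q^{-1}(V_{q,d})$ is only the preimage of a $d$-tube around $\Sigma_q$ in $\R^n$; in $M$ this is a region of $g$-diameter $O(\epsilon)$ around the shrinking branching component. Hence $M_d$ contains the entire neck $\{N\epsilon \lesssim \rho_q \lesssim \delta\}$ around each $p_q$, on which $\rho_\epsilon$ ranges from order $\epsilon$ to order $1$. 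A single convolution at a fixed scale $\theta^{-1}$ in the metric $g_\epsilon$ cannot produce $\epsilon$-uniform smoothing in the weighted norm $C^k_{\nu,\epsilon}$: for moderate $\theta$ the mollifier ball $B_{\theta^{-1}}(x)$ contains points at which $\rho_\epsilon$ differs from $\rho_\epsilon(x)$ by an unbounded factor, and the second inequality $\|(Id-S_\theta)x\|_{k_1} \leq C\theta^{k_1-k_2}\|x\|_{k_2}$ fails with any $\epsilon$-independent constant. The annulus $\{\epsilon^{1-\sigma} < \rho_q < 2\epsilon^{1-\sigma}\}$ you single out is the gluing region for $\tilde\alpha_\epsilon$, not the transition region for your cutoffs $\chi_q$, so your ``resolution'' does not address the actual obstacle.

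The paper's construction is genuinely different: it covers $M$ by balls of radius $a\rho_\epsilon(x_i)$ (and, for $\mathcal{E}^\epsilon$, of radius $a r_\epsilon(x_i)\rho_\epsilon(x_i)$, with $r_\epsilon$ a second weight measuring rescaled distance to $\Sigma_\epsilon$), rescales each ball to unit size so that the weights become approximately constant there, embeds $X$ tamely into $\prod_i C^\infty_c(B_{4a,i})$ via maps $M_1^\epsilon$, $M_2^\epsilon$ with $M_2^\epsilon \circ M_1^\epsilon = Id$ (Lemma~\ref{S_4_Lemma_Equivalence}), mollifies by a fixed Euclidean kernel on each factor, and sets $S^\epsilon_\theta = M_2^\epsilon \circ S_\theta \circ M_1^\epsilon$. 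The point is that the scale of the covering tracks $\rho_\epsilon$, so after rescaling every ball looks the same and the constants are manifestly $\epsilon$-independent; your coarse three-region decomposition does not achieve this.
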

		
		First, we focus on the case $X=\mathcal{M}^{\epsilon}$. Fix a small constant $a>0$, we cover $M$ by geodesic balls of radius $a\rho_{\epsilon}(x)$ with respect to the metric $g_{\epsilon}$. We pick a finite cover, which we label as $\{B_{r_{i}}(x_{i})\}$, such that there is a uniform upper bound for the number of $B_{4r_{i}}(x_{i})$ admitting a nonempty intersection. Let $u_{i}$ be a partition of unity that vanishes outside $B_{3r_{i}}(x_{i})$, and let $v_{i}$ be the bump function that equals $1$ on $B_{3r_{i}}(x_{i})$ and vanishes outside $B_{4r_{i}}(x_{i})$. We identify $\mathcal{M}^{\epsilon}$ as an open set in the space of smooth sections in $Sym^{2}(T^{*}M)$ using
			\begin{equation*}
				\tilde{g}=g_{\epsilon}+\eta, \eta \in \Gamma(Sym^{2}T^{*}M).
			\end{equation*}
			
		Using geodesic coordinates and rescaling by $\rho_{\epsilon}(x_{i})$, we identify $B_{4r_{i}}(x_{i})$ as a Euclidean ball of radius $4a$ centered at the origin, which we denote as $B_{4a,i}$. Then $\rho_{\epsilon}^{-2}(x_{i})u_{i}\eta$ defines a section supported on $B_{4a,i}$. Now let $g_{i}=\rho_{\epsilon}^{-2}(x_{i})g_{\epsilon}(x_{i})$ be the Euclidean metric on $\R^{n}_{i}$, and let $\nabla$ be the corresponding flat connection. Also, define $C^{\infty}_{c}(B_{4a,i})$ to be the space of compactly supported smooth sections on $B_{4a,i}$. We define two maps
			\begin{equation*}
				\begin{split}
					&M_{1}^{\epsilon}: \mathcal{M}^{\epsilon}\to \prod_{i}C^{\infty}_{c}(B_{4a,i}), \eta \mapsto (\rho_{\epsilon}^{-2-\nu}(x_{i})u_{i}\eta),\\
					&M_{2}^{\epsilon}: \prod_{i} C^{\infty}_{c}(B_{4a,i})\to \mathcal{M}^{\epsilon}, (\eta_{i})\mapsto \sum_{i}\rho_{\epsilon}^{2+\nu}(x_{i})v_{i}\eta_{i}.
				\end{split}
			\end{equation*}
		Here, the grade Fr\'echet norms on $\prod_{i}C^{\infty}_{c}(B_{4a,i})$ are defined to be
			\begin{equation*}
				\|(\eta_{i})\|_{k}=\sup_{i}\|\eta_{i}\|_{C^{k}_{c}(B_{4a,i})}.
			\end{equation*}
		It follows directly from the construction that $M_{2}^{\epsilon}\circ M_{1}^{\epsilon}=Id_{\mathcal{M}^{\epsilon}}$. Moreover, with a suitable choice of the finite cover, $\mathcal{M}^{\epsilon}$ and $\prod_{i}C^{\infty}_{c}(B_{4a,i})$ are tamely equivalent. To be precise, we have the following lemma.
			\begin{lemma}\label{S_4_Lemma_Equivalence}
				There is a choice of $\{B_{r_{i}}(x_{i})\}$ such that
					\begin{equation*}
						\begin{split}
							&\|M_{1}^{\epsilon}\eta\|_{k}\leq C_{k}\|\eta\|_{k},\\
							&\|M_{2}^{\epsilon}(\eta_{i})\|_{k}\leq C_{k}\|(\eta_{i})\|_{k}.
						\end{split}
					\end{equation*}
			\end{lemma}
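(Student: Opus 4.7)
The plan is to exploit the scale invariance of the construction: on each patch, the rescaled metric $g_i = \rho_\epsilon(x_i)^{-2} g_\epsilon$ is nearly Euclidean, the weight $\rho_\epsilon(x_i)$ is a constant, and the ball $B_{4r_i}(x_i)$ becomes the fixed Euclidean ball $B_{4a}$. This reduces the two tame estimates to uniform $C^k$ comparisons on a single model ball in $\R^n$, with constants depending only on the multiplicity of the cover and the $C^k$ norms of a single template bump function.

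I would first select $\{B_{r_i}(x_i)\}$ to be an \textbf{admissible cover}, meaning $r_i = a\rho_\epsilon(x_i)$ for a small fixed $a$ and the multiplicity of the enlarged cover $\{B_{4r_i}(x_i)\}$ is bounded by some $N$ independent of $\epsilon$. From the piecewise definition of $\rho_\epsilon$ one verifies $|\nabla \rho_\epsilon|_{g_\epsilon} \le C$ uniformly, so $\rho_\epsilon$ is Lipschitz; for $a$ sufficiently small this forces $\rho_\epsilon(x_i) \sim \rho_\epsilon(x_j)$ whenever $B_{4r_i}\cap B_{4r_j} \ne \emptyset$, and a standard Besicovitch-type selection together with volume comparison in the rescaled metric delivers the multiplicity bound. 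The same Lipschitz bound also gives $\rho_\epsilon(x) \sim \rho_\epsilon(x_i)$ on $B_{4r_i}(x_i)$, so the weight can be treated as a constant on each patch.

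Next I would check that the rescaled metrics $g_i$ have uniformly bounded geometry on $B_{4a,i}$ by verifying region by region that $\rho_\epsilon^2 \cdot \mathrm{Rm}(g_\epsilon)$ is bounded. On the core region $\{\rho_q<N\epsilon\}$, $g_\epsilon = \epsilon^2\Psi_q^* g_{\R^n}$ and $\rho_\epsilon = \epsilon$, so $g_i$ is exactly Euclidean; in the intermediate range $2N\epsilon<\rho_q<\delta$, the curvature of $g$ is bounded while $\rho_\epsilon \sim \rho_q$; in the bulk $\rho_q>2\delta$ both $\rho_\epsilon=1$ and $g_\epsilon=g$ are fixed; the interpolation and gluing annuli are handled similarly from the explicit formulas. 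Consequently the exponential map of $g_i$ at $x_i$ provides a chart on $B_{4a,i}$ identifying $g_i$ with the Euclidean metric to any order of $C^k$ accuracy, uniformly in $i$ and $\epsilon$. Transplanting a fixed template bump $\phi \in C^\infty_c(\R^n)$ to these charts yields a partition of unity $\{u_i\}$ and companion cutoffs $\{v_i\}$ with $C^k$ norms on $B_{4a,i}$ bounded independently of $i,\epsilon$; the finite-overlap bound ensures the denominator in $u_i=\phi_i/\sum_j\phi_j$ is smooth and bounded away from zero.

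The two estimates then reduce to scaling bookkeeping. For a $(0,2)$-tensor $\eta$, covariant derivatives rescale by $|\nabla^j\eta|_{g_i}=\rho_\epsilon(x_i)^{2+j}|\nabla^j\eta|_{g_\epsilon}$, so on $B_{4r_i}(x_i)$ the inequality $\rho_\epsilon^{-\nu+j}|\nabla^j\eta|_{g_\epsilon} \le \|\eta\|_{C^k_{\nu,\epsilon}}$ translates to $\rho_\epsilon(x_i)^{-2-\nu}|\nabla^j\eta|_{g_i} \le C\|\eta\|_{C^k_{\nu,\epsilon}}$; combined with the uniform $C^k$ bound on $u_i$ and Leibniz, this gives $\|\rho_\epsilon^{-2-\nu}(x_i)u_i\eta\|_{C^k(B_{4a,i})} \le C_k\|\eta\|_{C^k_{\nu,\epsilon}}$, and taking the supremum over $i$ proves the estimate for $M_1^\epsilon$. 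For $M_2^\epsilon$ the reverse scaling shows each summand $\rho_\epsilon^{2+\nu}(x_i)v_i\eta_i$ contributes $\rho_\epsilon^{-\nu+j}|\nabla^j(\cdot)|_{g_\epsilon} \le C_k\|(\eta_i)\|_k$ on its support, and the finite multiplicity $N$ caps the pointwise sum. The main substantive input is the admissibility of the cover together with uniform bounded geometry in the transition annuli where $g_\epsilon$ interpolates between the two model geometries; once these geometric statements are secured the remainder is a routine scaling computation insensitive to $\epsilon$.
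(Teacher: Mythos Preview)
Your proposal is correct and follows precisely the strategy the paper sets up in the paragraphs preceding the lemma: the paper already stipulates a finite cover by geodesic balls of radius $a\rho_{\epsilon}(x)$ with uniformly bounded multiplicity of the enlarged balls, together with a subordinate partition of unity $\{u_i\}$ and companion cutoffs $\{v_i\}$, and then states the lemma without proof, treating the tame equivalence as routine. What you have written is exactly the standard verification the paper leaves implicit---bounded geometry of the rescaled metrics $g_i$, uniform $C^k$ bounds on the template bump functions, and the scaling identity $|\nabla^{j}\eta|_{g_i}=\rho_{\epsilon}(x_i)^{2+j}|\nabla^{j}\eta|_{g_{\epsilon}}$---so there is no substantive difference in approach.
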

		
		A similar argument also applies to graded Fr\'echet spaces $\mathcal{S}^{\epsilon}, \mathcal{V}^{\epsilon}$ and requires a slight modification to adapt to the case $\mathcal{E}^{\epsilon}$. From the perspective of weighted analysis, the $C^{k}_{\alpha,\tau,\epsilon}$ are double-weighted by $\rho_{\epsilon}$ and a function vanishing on $\Sigma_{\epsilon}$. To handle this subtlety, we define $r_{\epsilon}$ to be a smooth function such that
			\begin{equation*}
				r_{\epsilon}(x)=
				\begin{cases}
					1, & x \in  M_{2r},\\
					dist_{g_{\epsilon}}(x, \Sigma_{0}), & x \in U_{0,r},\\
					\epsilon^{-1}dist_{g_{\epsilon}}(x,\Psi_{q}^{-1}\Sigma_{q}), & x\in U_{q,r}.
				\end{cases}
			\end{equation*}
		We now cover $M$ with geodesic balls of radius $a r_{\epsilon} \rho_{\epsilon}$. Although we are unable to choose a finite subcover in this situation, we can still choose a nice subcover such that only finitely many sets admit a non-empty intersection and the related maps
			\begin{equation*}
				\begin{split}
					&M_{1}^{\epsilon}: \mathcal{E}^{\epsilon}\to \prod_{i}C^{\infty}_{c}(B_{4a,i}), \eta \mapsto (\rho_{\epsilon}^{-\nu}r_{\epsilon}^{-\alpha}(x_{i})u_{i}\eta),\\
					&M_{2}^{\epsilon}: \prod_{i} C^{\infty}_{c}(B_{4a,i})\to \mathcal{E}^{\epsilon}, (\eta_{i})\mapsto \sum_{i}\rho_{\epsilon}^{\nu}r_{\epsilon}^{\alpha}(x_{i})v_{i}\eta_{i},
				\end{split}
			\end{equation*}
		also, have the properties in Lemma \ref{S_4_Lemma_Equivalence}.\\

		With the above discussion, constructing smoothing operators on $X$ amounts to constructing smoothing operators on each  $C^{\infty}_{c}(B_{4a,i})$. Let $\gamma\in C^{\infty}(\R^{\geq 0})$ be a non-increasing smooth function such that
			\begin{equation*}
				\gamma(s)=
				\begin{cases}
					1, &s\leq 1,\\
					0, &s\geq 2.
				\end{cases}
			\end{equation*}
		We define $p_{\theta}(x)=c\theta^{n}\gamma(\theta|x|)$, where $c$ is a normalized constant such that
			\begin{equation*}
				\int_{\R^{n}}p_{\theta}(x)\ud x=1.
			\end{equation*}
		The smoothing operators $S_{\theta}:C^{\infty}_{c}(B_{4a,i})\to C^{\infty}_{c}(\R^{n})$ are obtained by convolution
			\begin{equation*}
				(S_{\theta}\eta)(x):=\int_{\R^{n}}\eta(y)p_{\theta}(x-y)\ud y.
			\end{equation*}
			\begin{remark}
				It should be noted here that if $\eta\in C^{\infty}_{c}(B_{4a,i})$ and $\theta>2/a$, then $S_{\theta}\eta\in C^{\infty}_{c}(B_{4a,i})$.
			\end{remark}
		
		Now it is a calculus exercise to check that $S_{\theta}$ satisfies those three properties in Proposition \ref{S_4_Prop_Smoothing}. Use Lemma \ref{S_4_Lemma_Equivalence}, we define
			\begin{equation}\label{S_4_Eqn_Smoothings}
				S^{\epsilon}_{\theta}:=M^{\epsilon}_{2}\circ S_{\theta}\circ M^{\epsilon}_{1},
			\end{equation}
		and $S^{\epsilon}_{\theta}$ satisfies the properties in Proposition \ref{S_4_Prop_Smoothing}.\\
		
		Once the smoothing operators are constructed, we are able to derive an interpolation formula for the graded Fr\'echet norms.
			\begin{corollary}\label{S_4_Cor_Interpolation}
				For any triple $k_{1}\leq k_{2}\leq k_{3}$, the following estimates hold
				\begin{equation*}
					\|f\|_{k_{2}}^{k_{3}-k_{1}}\leq C_{k_{1},k_{2},k_{3}}\|f\|_{k_{1}}^{k_{3}-k_{2}}\|f\|_{k_{3}}^{k_{2}-k_{1}}.
				\end{equation*}
			\end{corollary}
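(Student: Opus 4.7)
The plan is to execute the standard Landau--Kolmogorov interpolation argument using the smoothing operators constructed in Proposition~\ref{S_4_Prop_Smoothing}. The idea is that for any parameter $\theta \geq \theta_0$, one splits $f = S^{\epsilon}_{\theta} f + (Id - S^{\epsilon}_{\theta}) f$, then controls the first piece by the lowest norm $\|\cdot\|_{k_1}$ and the second piece by the highest norm $\|\cdot\|_{k_3}$, and finally optimizes in $\theta$.

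More concretely, by the triangle inequality
\begin{equation*}
    \|f\|_{k_2} \leq \|S^{\epsilon}_{\theta} f\|_{k_2} + \|(Id - S^{\epsilon}_{\theta})f\|_{k_2}.
\end{equation*}
The first estimate of Proposition~\ref{S_4_Prop_Smoothing} (with $k_1 \leq k_2$) gives $\|S^{\epsilon}_{\theta} f\|_{k_2} \leq C_{k_1,k_2}\, \theta^{k_2 - k_1} \|f\|_{k_1}$, while the second (with $k_2 \leq k_3$) gives $\|(Id - S^{\epsilon}_{\theta}) f\|_{k_2} \leq C_{k_2, k_3}\, \theta^{k_2 - k_3} \|f\|_{k_3}$. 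Combining,
\begin{equation*}
    \|f\|_{k_2} \leq C\,\bigl( \theta^{k_2 - k_1} \|f\|_{k_1} + \theta^{k_2 - k_3} \|f\|_{k_3} \bigr).
\end{equation*}

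Next I would optimize by equating the two summands, which forces the choice
\begin{equation*}
    \theta = \bigl(\|f\|_{k_3}/\|f\|_{k_1}\bigr)^{1/(k_3 - k_1)}.
\end{equation*}
Since the norms are monotone in the grading, $\|f\|_{k_3} \geq \|f\|_{k_1}$, so $\theta \geq 1$; if this value is smaller than the threshold $\theta_0$ required by the Remark after Proposition~\ref{S_4_Prop_Smoothing}, one simply replaces it by $\theta_0$ and absorbs the resulting multiplicative factor into the constant, noting that in that regime $\|f\|_{k_3}$ and $\|f\|_{k_1}$ are comparable and the desired inequality is trivial. Substituting this $\theta$ yields
\begin{equation*}
    \|f\|_{k_2} \leq 2C\, \|f\|_{k_1}^{(k_3 - k_2)/(k_3 - k_1)}\, \|f\|_{k_3}^{(k_2 - k_1)/(k_3 - k_1)}.
\end{equation*}
Raising both sides to the power $k_3 - k_1$ produces the stated inequality with $C_{k_1, k_2, k_3} = (2C)^{k_3 - k_1}$.

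There is no real obstacle here; the only mild point worth verifying is that all constants are independent of $\epsilon$, which follows from the $\epsilon$-uniformity built into Proposition~\ref{S_4_Prop_Smoothing}. The argument therefore applies uniformly to each of the graded Fr\'echet spaces $\mathcal{M}^{\epsilon}, \mathcal{S}^{\epsilon}, \mathcal{V}^{\epsilon}, \mathcal{E}^{\epsilon}$, so the interpolation constants $C_{k_1, k_2, k_3}$ may be chosen independent of $\epsilon$ as well.
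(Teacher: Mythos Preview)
Your argument is correct and is essentially identical to the paper's own proof: split $f = S^{\epsilon}_{\theta}f + (Id - S^{\epsilon}_{\theta})f$, apply the two smoothing estimates from Proposition~\ref{S_4_Prop_Smoothing}, and optimize in $\theta$ by equating the two terms. Your treatment is in fact slightly more careful than the paper's, since you explicitly address the threshold on $\theta$ and the $\epsilon$-uniformity of the constants; the paper simply notes that monotonicity of the norms forces $\theta \geq 1$.
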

			\begin{proof}
				For all $\theta\geq 1$
				\begin{equation*}
					\begin{split}
						\|f\|_{k_{2}}&\leq \|S_{\theta}f\|_{k_{2}}+\|(Id-S_{\theta})f\|_{k_{2}}\\
						&\leq C_{k_{3},k_{2}}\theta^{k_{2}-k_{3}}\|f\|_{k_{3}}+C_{k_{1},k_{2}}\theta^{k_{2}-k_{1}}\|f\|_{k_{1}}.
					\end{split}
				\end{equation*}
				If $f\neq 0$, we can choose $\theta$ so that $\theta^{k_{2}-k_{3}}\|f\|_{k_{3}}$ and $\theta^{k_{2}-k_{1}}\|f\|_{k_{1}}$ are equal. This happens if we pick
					\begin{equation*}
						\theta^{k_{3}-k_{1}}=\|f\|_{k_{3}}/\|f\|_{k_{1}}.
					\end{equation*}
				Then, the corollary follows. 
			\end{proof}
			
		An immediate consequence of the interpolation formula is the tame estimates for the maps $\underline{A}$ and $\underline{B}$. Let $K$ be a large positive integer.
		
		\begin{prop}\label{S_4_Prop_AB_map}[Tame estimates]
		 Suppose $\tilde{g}=g_{\epsilon}+\epsilon^{2-\tau}\eta$, with $\|\eta\|_{C^{3K,\alpha}_{\tau-2,\epsilon}}\leq \delta'$ for a small $\delta'>0$. Then, for every $k\geq K$, the maps $\epsilon^{\tau-2}\underline{A}^{\epsilon}(s_{\epsilon}): \mathcal{M}_{0}^{\epsilon}\to C^{\infty}_{\tau-2,\epsilon}(\Sigma_{\epsilon},N\Sigma_{\epsilon}^{-\frac{1}{2}})$ and $\epsilon^{\tau-2}\underline{B}^{\epsilon}(s_{\epsilon}): \mathcal{M}_{0}^{\epsilon}\to C^{\infty}_{\tau-2,\epsilon}(\Sigma_{\epsilon},N\Sigma_{\epsilon}^{-\frac{3}{2}})$ satisfy tame estimates
				\begin{equation*}
					\begin{split}
						&\|\epsilon^{\tau-2}\underline{A}^{\epsilon}(s_{\epsilon}(\tilde{g}))\|_{C^{k+1}_{\tau-2,\epsilon}}\leq C_{k}\big(\epsilon^{(n-2+\tau)\sigma}+\|\eta\|_{C^{k+K}_{\tau-2,\epsilon}}\big),\\
						&\|\epsilon^{\tau-2}\underline{B}^{\epsilon}(s_{\epsilon}(\tilde{g}))\|_{C^{k}_{\tau-2,\epsilon}}\leq C_{k}\big(\epsilon^{(n-2+\tau)\sigma}+\|\eta\|_{C^{k+K}_{\tau-2,\epsilon}}\big).
					\end{split}
				\end{equation*}
		\end{prop}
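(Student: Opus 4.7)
My plan is to compose the available estimates and then extract a tame bound. Set $\gamma := \epsilon^{2-\tau}\eta$, so that $\tilde g = g_\epsilon+\gamma$. By homogeneity, $\|\gamma\|_{C^k_{\tau-2,\epsilon}} = \epsilon^{2-\tau}\|\eta\|_{C^k_{\tau-2,\epsilon}}$, and because $\rho_\epsilon \geq \epsilon$ together with $2-\tau>0$, a termwise comparison of the sums in Definition \ref{S_3_Defn_W_Holder_1} gives $\|\gamma\|_{C^k_{0,\epsilon}} \leq \|\eta\|_{C^k_{\tau-2,\epsilon}}$. In particular, the hypothesis $\|\eta\|_{C^{3K,\alpha}_{\tau-2,\epsilon}}\leq \delta'$ furnishes the uniform low-order bound $\|\gamma\|_{C^{3K,\alpha}_{0,\epsilon}} \leq \delta'$, on which the whole argument will pivot.

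Next, I plug $s_\epsilon(\tilde g)$ into the asymptotic operators. By Definition \ref{S_3_Defn_Sheaf_Map}, $\underline A^\epsilon$ and $\underline B^\epsilon$ are bounded linear maps $\mathcal{C}^{k+2,\alpha}_{\tau,\epsilon}\to C^{k+1,\alpha+1/2}_{\tau-2,\epsilon}$ and $\mathcal{C}^{k+2,\alpha}_{\tau,\epsilon}\to C^{k,\alpha+1/2}_{\tau-2,\epsilon}$, with operator norms independent of $\epsilon$ (the $\epsilon^{-2}$ rescalings around each $p_q$ exactly match the rescaling to the metric $\epsilon^{-2}g_\epsilon$). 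Combined with Lemma \ref{S_3_Lemma_Error_gluing}, this yields
\[
\|\underline A^\epsilon(s_\epsilon(\tilde g))\|_{C^{k+1}_{\tau-2,\epsilon}} + \|\underline B^\epsilon(s_\epsilon(\tilde g))\|_{C^k_{\tau-2,\epsilon}} \leq C_k\bigl(1+\|\gamma\|_{C^{k+K,\alpha}_{0,\epsilon}}\bigr)^k\bigl(\|\gamma\|_{C^{k+K,\alpha}_{\tau-2,\epsilon}}+\epsilon^{n\sigma+(1-\sigma)(2-\tau)}\bigr).
\]

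Then I would convert the non-tame prefactor $(1+\|\gamma\|_{C^{k+K,\alpha}_{0,\epsilon}})^k$ into a tame linear bound using Corollary \ref{S_4_Cor_Interpolation} together with the uniform low-order bound $\|\gamma\|_{C^{3K,\alpha}_{0,\epsilon}}\leq \delta'$. The standard Nash-Moser trick is to expand the $k$-th power binomially and interpolate each intermediate power between the controlled low norm $C^{3K,\alpha}_{0,\epsilon}$ and a slightly higher norm: since $\|\gamma\|_{C^{3K,\alpha}_{0,\epsilon}}\leq \delta'\leq 1$, the tame product estimate collapses each term into at most $C_k(1+\|\gamma\|_{C^{k+K,\alpha}_{0,\epsilon}})$, with constants that are $\epsilon$-independent thanks to Lemma \ref{S_4_Lemma_Equivalence} and Proposition \ref{S_4_Prop_Smoothing}. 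Applying the dictionary $\|\gamma\|_{C^{k+K,\alpha}_{0,\epsilon}}\leq \|\eta\|_{C^{k+K,\alpha}_{\tau-2,\epsilon}}$ then absorbs the prefactor. Substituting $\gamma=\epsilon^{2-\tau}\eta$ gives $\|\gamma\|_{C^{k+K,\alpha}_{\tau-2,\epsilon}}=\epsilon^{2-\tau}\|\eta\|_{C^{k+K,\alpha}_{\tau-2,\epsilon}}$; multiplying through by $\epsilon^{\tau-2}$, the first term on the right becomes exactly $\|\eta\|_{C^{k+K}_{\tau-2,\epsilon}}$ and the exponent of $\epsilon$ in the gluing error becomes $n\sigma+(1-\sigma)(2-\tau)+\tau-2 = \sigma(n-2+\tau)$, which is the claimed estimate.

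The main obstacle is the interpolation step. Because $\rho_\epsilon$ degenerates as $\epsilon\to 0$, naive Gagliardo-Nirenberg would introduce $\epsilon$-dependent constants, and a straightforward interpolation between a fixed low norm and a single higher norm would close only at the cost of a degree loss $L-k$ that grows with $k$, destroying the fixed-degree tame estimate. The saving point is that all the weighted norms in play are, uniformly in $\epsilon$, tamely equivalent to a product of standard Euclidean $C^k_c$-spaces via the maps $M_1^\epsilon,M_2^\epsilon$ in Lemma \ref{S_4_Lemma_Equivalence}, and the smoothing operators $S_\theta^\epsilon$ of Proposition \ref{S_4_Prop_Smoothing} are built by pulling back a single convolution on $\R^n$. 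This is exactly the extra input required so that the interpolation constants depend only on $k$ and the covering geometry, and it is the reason $K$ must be chosen sufficiently large (to absorb the fixed degree loss produced by the binomial expansion and interpolation).
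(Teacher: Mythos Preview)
Your reduction to bounding $\epsilon^{\tau-2}s_\epsilon$ in $\mathcal{C}^{k+2,\alpha}_{\tau,\epsilon}$ via the linear maps $\underline A^\epsilon,\underline B^\epsilon$ is exactly right, and for $k\leq 2K$ your argument from Lemma~\ref{S_3_Lemma_Error_gluing} goes through (the prefactor $(1+\|\gamma\|_{C^{k+K,\alpha}_{0,\epsilon}})^k\leq (1+\delta')^{2K}$ is a constant). The problem is the range $k>2K$.

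There your key step is to ``collapse'' the non-tame factor $(1+\|\gamma\|_{C^{k+K,\alpha}_{0,\epsilon}})^k$ into $C_k(1+\|\gamma\|_{C^{k+K,\alpha}_{0,\epsilon}})$ by interpolation. This cannot be done with a fixed degree loss. Writing $a:=\|\eta\|_{C^{k+K}_{\tau-2,\epsilon}}$, the quantity you must control is essentially $(1+a)^k\cdot a$, and you want to bound it by $C_k\,\|\eta\|_{C^{k+K'}_{\tau-2,\epsilon}}$ with $K'$ independent of $k$. Interpolating $a=\|\eta\|_{k+K}$ between the controlled low norm $\|\eta\|_{3K}\leq \delta'$ and a higher norm $\|\eta\|_L$ gives $a^j\leq C\,\delta'^{\,j(1-\theta)}\|\eta\|_L^{\,j\theta}$; to make this \emph{linear} in $\|\eta\|_L$ you need $j\theta=1$, which forces $L=j(k-2K)+3K$. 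For $j=k$ this is quadratic in $k$, so the degree loss grows and the estimate is not tame. Your last paragraph correctly diagnoses this obstruction, but the proposed fix---uniform tame equivalence to Euclidean $C^k_c$ via $M_1^\epsilon,M_2^\epsilon$ and the smoothing operators---only addresses $\epsilon$-independence of the interpolation constants; it does nothing about the degree.

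The paper avoids forming the power $(1+a)^k$ altogether. Instead it proves the tame bound on $\epsilon^{\tau-2}s_\epsilon$ by induction on $k$: commuting $k$ tangential derivatives through $\widetilde\Delta_{\tilde g}$ produces error terms of the bilinear shape $\sum_{p}\|\eta\|_{K+p}\,\|u\|_{k+1-p}$ (and after the inductive hypothesis, $\sum_{p}\|\eta\|_{K+1+p}\,\|\eta\|_{k+K-1-p}$). These are sums of products with \emph{complementary} indices, and that is exactly the structure the interpolation inequality of Corollary~\ref{S_4_Cor_Interpolation} together with $\|\eta\|_{C^{3K}_{\tau-2,\epsilon}}\leq \delta'$ linearises to $C_k\|\eta\|_{k+K}$ with fixed degree. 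In short: you must derive a bilinear-in-$\eta$ recursion (via commutators and induction) rather than try to tame the $k$-th power produced by Lemma~\ref{S_3_Lemma_Error_gluing} after the fact.
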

		\begin{remark}\label{S_4_Rmk_ABmap}
			Suppose $s_{\epsilon}(g_{\epsilon}+\epsilon^{2-\tau}\eta)$ is the unique solution to the Poisson equation \ref{S_3_Eqn_Poisson}, then we have the estimate
				\begin{equation*}
					\|\underline{A}_{\tau}(s_{\epsilon})\|_{C^{k+1,\alpha+\frac{1}{2}}_{0,\epsilon}}, \|\underline{B}_{\tau}(s_{\epsilon})\|_{C^{k,\alpha+\frac{1}{2}}_{0,\epsilon}}\leq C_{k}\big(\|\eta\|_{C^{k+K,\alpha}_{\tau-2,\epsilon}}+\epsilon^{(n-2+\tau)\sigma}\big).
				\end{equation*}

			In the gluing setup, the difference between $g_{\epsilon}$ and $g$ is bounded by
				\begin{equation*}
					\|g-g_{\epsilon}\|_{C^{k,\alpha}_{-2+\tau,\epsilon}}\leq C_{k}\epsilon^{(4-\tau)(1-\sigma)}.
				\end{equation*}
			Since $(4-\tau)(1-\sigma)>(2-\tau)$, it is justified to rescale the difference by $\epsilon^{2-\tau}$. Moreover, we emphasize that the dimension $n\geq 3$ is crucial in our setup, as it ensures that the error introduced by the approximate solution remains small.
		\end{remark}
		\begin{proof}
			Since $\underline{A}^{\epsilon}, \underline{B}^{\epsilon}$ is a bounded linear map and
				\begin{equation*}
					\|\cdot\|_{C^{k}_{\tau,\epsilon}(\Sigma_{\epsilon}, N\Sigma_{\epsilon}^{-p})}\leq \|\cdot\|_{C^{k,\alpha+\frac{1}{2}}_{\tau,\epsilon}(\Sigma_{\epsilon}, N\Sigma_{\epsilon}^{-p})},
				\end{equation*}
			it suffices to prove the tame estimates
				\begin{equation}\label{S_3_Eqn_Tame}
					\|\epsilon^{\tau-2}s_{\epsilon}(\tilde{g})\|_{\mathcal{C}^{k+2,\alpha}_{\tau,\epsilon}}\leq C_{k}(\epsilon^{(n-2+\tau)\sigma}+\|\eta\|_{C^{k+K,\alpha}_{\tau-2,\epsilon}}), \forall k\geq K.
				\end{equation}
				Obviously, we have
				\begin{equation*}
					\|\epsilon^{2-\tau}\eta\|_{C^{k+K,\alpha}_{0,\epsilon}}\leq \|\eta\|_{C^{k+K.\alpha}_{\tau-2,\epsilon}},
				\end{equation*}
				then when $k\leq 2K$, the estimates follow from Lemma \ref{S_3_Lemma_Error_gluing}.\\

			On the other hand, when $k\geq 2K$, we prove by induction. Suppose the statement is true for every $k'\leq k-1$. We estimate the $\mathcal{D}^{k,\alpha}_{\tau,\epsilon}$ norms for
				\begin{equation*}
					\widetilde{\Delta}_{g_{\epsilon}}\epsilon^{\tau-2} s_{\epsilon}(\tilde{g}),
				\end{equation*}
			then Proposition \ref{S_3_Prop_linear_est} yields the estimate \ref{S_3_Eqn_Tame}. Notice that $\widetilde{\Delta}_{g_{\epsilon}}=\widetilde{\Delta}_{\tilde{g}}+\mathcal{L}$ the estimates can be broken into two parts. The first part follows directly from Poisson's equation \ref{S_3_Eqn_Poisson} and Lemma \ref{S_3_Lemma_Error_gluing}. On the other hand, the second part amounts to estimating $\|\epsilon^{\tau-2}s_{\epsilon}\|_{\mathcal{D}^{k+2,\alpha}_{\tau,\epsilon}}$ and the coefficients of $\mathcal{L}$.\\

				We use the same trick in Lemma \ref{S_3_Lemma_Error_gluing} to estimate the $\mathcal{D}^{k+2,\alpha}_{\tau,\epsilon}$ norms. First, we consider the region $M_{d}$. Let $x_{i}\in M_{d}$ and let $X_{1},\cdots, X_{k}$ be vector fields such that they are orthonormal with respect to $\rho_{\epsilon}^{-2}(x_{i})g_{\epsilon}$ on $B_{c\rho(x_{i})}(x_{i})$ and supported on $B_{2c\rho(x_{i})}(x_{i})$. Also let $\nabla_{j}=\nabla_{X_{j}}$ be the covariant derivatives with respect to the metric $g_{\epsilon}$. The commutator $T_{j}=[\widetilde{\Delta}_{\tilde{g}},\nabla_{j}]$ defines a second order differential operator which is supported on $B_{2c\rho(x_{i})}(x_{i})$. We have
					\begin{equation*}
						[\widetilde{\Delta}_{g_{i}+\eta_{i}},\nabla_{1}\cdots \nabla_{k}]=\nabla_{1}\cdots\nabla_{k-1}T_{k}+\cdots+T_{1}\cdots\nabla_{k}.
					\end{equation*}
				For simplicity, denote $u$ as $\epsilon^{\tau-2}s_{\epsilon}$ and $v$ as $\epsilon^{2-\tau}\eta$. Direct computation implies
					\begin{equation*}
						\begin{split}
							&\|\widetilde{\Delta}_{\tilde{g}}\nabla_{1}\cdots \nabla_{k}u\|_{\mathcal{D}^{0,\alpha}_{\tau-2,\epsilon}}\leq \|\nabla_{1}\cdots \nabla_{k}\widetilde{\Delta}_{\tilde{g}}u\|_{\mathcal{D}^{0,\alpha}_{\tau-2,\epsilon}}\\
							&+C_{k}\big(\|v\|_{C^{2,\alpha}_{0,\epsilon}}\|u\|_{\mathcal{D}^{k+1,\alpha}_{\tau-2,\epsilon}}+\cdots+ \|v\|_{C^{k+1,\alpha}_{0,\epsilon}}\|u\|_{\mathcal{D}^{2,\alpha}_{\tau-2,\epsilon}}\big).
						\end{split}
					\end{equation*}
				On $U_{0,2d}$ and on $U_{q,2d}$ with the weighted metric $\rho_{\epsilon}^{-2}g_{\epsilon}$, we instead take $X_{k}\in \mathcal{T}_{1}$. Then, we have a similar bound but with a loss of regularity on $\eta$
					\begin{equation*}
						\begin{split}
							&\|\widetilde{\Delta}_{\tilde{g}}\nabla_{1}\cdots \nabla_{k}u\|_{\mathcal{D}^{0,\alpha}_{\tau-2,\epsilon}}\leq \|\nabla_{1}\cdots \nabla_{k}\widetilde{\Delta}_{\tilde{g}}u\|_{\mathcal{D}^{0,\alpha}_{\tau-2,\epsilon}}\\
							&+C_{k}\big(\|v\|_{C^{2+m',\alpha}_{0,\epsilon}}\|u\|_{\mathcal{D}^{k+1,\alpha}_{\tau,\epsilon}}+\cdots+ \|v\|_{C^{k+1+m',\alpha}_{0,\epsilon}}\|u\|_{\mathcal{D}^{2,\alpha}_{\tau,\epsilon}}\big).
						\end{split}
					\end{equation*}
				Here, $K'$ is a constant representing the loss of regularity in the coefficients of the commuters $T_{j}$, which can equal $3$. Choose $K\geq K'+3$ and use Proposition \ref{S_3_Prop_linear_est}. Using the induction hypothesis, we conclude
					\begin{equation*}
						\begin{split}
							\|u\|_{\mathcal{D}^{k+2,\alpha}_{\tau,\epsilon}}&\leq C_{k}\big(\epsilon^{(n-2+\tau)\sigma}+\|\eta\|_{C^{k+K-1,\alpha}_{\tau-2,\epsilon}}+\sum_{p=0}^{k-2}\|\eta\|_{C^{K+p,\alpha}_{\tau-2,\epsilon}}\|\eta\|_{C^{k+K-2-p,\alpha}_{\tau-2,\epsilon}}\big)\\
							&\leq C_{k}\big(\epsilon^{(n-2+\tau)\sigma}+\|\eta\|_{C^{k+K,\alpha}_{\tau-2,\epsilon}}+\sum_{p=0}^{k-2}\|\eta\|_{C^{K+1+p}_{\tau-2,\epsilon}}\|\eta\|_{C^{k+K-1-p}_{\tau-2,\epsilon}}\big).
						\end{split}
					\end{equation*}
				Interpolation formula for $K\leq j\leq k+K$
					\begin{equation*}
						\|\eta\|_{C^{j}_{\tau-2,\epsilon}}\leq C_{K,j,k+K}\|\eta\|_{C^{K}_{\tau-2,\epsilon}}^{(k+K-j)/k}\|\eta\|_{C^{k+K}_{\tau-2,\epsilon}}^{(j-K)/k},
					\end{equation*}
				and the bound for $\|\eta\|_{C^{2K,\alpha}_{\tau-2,\epsilon}}$ yields the estimate
					\begin{equation*}
						\|u\|_{\mathcal{D}^{k+2,\alpha}_{\tau,\epsilon}}\leq C_{k}\big(\epsilon^{(n-2+\tau)\sigma}+\|\eta\|_{C^{k+K,\alpha}_{\tau-2,\epsilon}}\big).
					\end{equation*}
				
				It remains to bound $\|\mathcal{L}u\|_{\mathcal{D}^{k,\alpha}_{\tau-2,\epsilon}}$. Straightforward computations and similar arguments show that
					\begin{equation*}
						\|\mathcal{L}u\|_{\mathcal{D}^{k,\alpha}_{\tau-2,\epsilon}}\leq C_{k}(\epsilon^{(n-2+\tau)\sigma}+\|\eta\|_{C^{k+K,\alpha}_{\tau-2,\epsilon}}),				
					\end{equation*}
				which completes the proof of the tame estimate.
		\end{proof}

	\subsection{Setting up the gluing problem}
		Recall that in a small neighborhood of $\mathcal{U}$ of $(g_{\epsilon},\Sigma_{\epsilon})$ in  $\mathcal{M}^{\epsilon}\times \mathcal{S}^{\epsilon}$, we find a smooth map $\Phi: \mathcal{U}\to \mathcal{V}^{\epsilon}$ such that
			\begin{equation*}
				\Phi(\tilde{g},\widetilde{\Sigma})^{*}(\widetilde{\Sigma})=\Sigma_{\epsilon},
			\end{equation*}
			and that
			\begin{equation*}
				\Phi(\tilde{g},\widetilde{\Sigma})^{*}(\tilde{g})\in \mathcal{M}_{0}^{\epsilon}.
			\end{equation*}
		For $-1\ll \tau<0$, we endow $\mathcal{M}^{\epsilon}, \mathcal{M}^{\epsilon}_{0}$ with graded Fr\'echet norms $\|\cdot\|_{C^{k}_{\tau-2,\epsilon}}$, the space of vector fields $\mathcal{V}^{\epsilon}$ with $\|\cdot\|_{C^{k}_{\tau-1,\epsilon}}$ norms and $\mathcal{S}^{\epsilon}$, the space of deformation of $\Sigma_{\epsilon}$ with $\|\cdot\|_{C^{k}_{\tau-2,\epsilon}(\Sigma_{\epsilon},N\Sigma_{\epsilon})}$ norms. We now have the global version of Lemma \ref{S_2_Lemma_Deformation} together with tame estimates. Let $m\geq K$ be an arbitrary integer.
			\begin{lemma}\label{S_4_Lemma_Deformation}
				Suppose that $\tilde{g}=g_{\epsilon}+\epsilon^{2-\tau}\eta$ and that the deformation of $\Sigma_{\epsilon}$ is given by $\epsilon^{2-\tau}v \in C^{\infty}_{\tau-2,\epsilon}(\Sigma_{\epsilon}, N\Sigma_{\epsilon})$. If $\|\eta\|_{C^{3m}_{\tau-2,\epsilon}}, \|v\|_{C^{3m}_{\tau-2,\epsilon}}\ll 1$, then the diffeomorphism $\Phi_{\epsilon}(\widetilde{\Sigma},\tilde{g})$ satisfies
					\begin{equation*}
						\|\Phi_{\epsilon}-Id\|_{C^{k}_{\tau-1,\epsilon}}\leq C_{k}\epsilon^{2-\tau}\big(\|\eta\|_{C^{k}_{\tau-2,\epsilon}}+\|v\|_{C^{k}_{\tau-2,\epsilon}}\big).
					\end{equation*}
				Moreover, the map
					\begin{equation*}
						\mathcal{G}^{\epsilon}:(v,\tilde{g})\mapsto \epsilon^{\tau-2}\big(\Phi_{\epsilon}^{*}\tilde{g}-g_{\epsilon}\big)
					\end{equation*}
				is a smooth tame map with the following tame estimate
					\begin{equation*}
							\|\mathcal{G}^{\epsilon}(v,\tilde{g})\|_{C^{k}_{\tau-2,\epsilon}}\leq C_{k}\big(\|\eta\|_{C^{k+1}_{\tau-2,\epsilon}}+\|v\|_{C^{k+1}_{\tau-2,\epsilon}}\big)
					\end{equation*}
				and denote $X=(U,V)$ as the vector for $k\leq 2m$ 
					\begin{equation*}
						\begin{split}
							&\|D\mathcal{G}^{\epsilon}(X)\|_{C^{k}_{\tau-2,\epsilon}}\leq C_{k}\|X\|_{C^{k+1}_{\tau-2,\epsilon}},\\
							&\|D^{2}\mathcal{G}^{\epsilon}(X_{1},X_{2})\|_{C^{k}_{\tau-2,\epsilon}}\leq C_{k}\|X_{1}\|_{C^{k+1}_{\tau-2,\epsilon}}\|X_{2}\|_{C^{k+1}_{\tau-2,\epsilon}}.
						\end{split}
					\end{equation*}
			\end{lemma}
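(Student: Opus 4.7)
The plan is to adapt the two-step construction from Lemma \ref{S_2_Lemma_Deformation} to each of the model regions of $(M, g_\epsilon)$ — namely $M_d$, the tubular neighborhood $U_{0,2d}$ of $\Sigma_0$, and each $U_{q,2d}$ — and to track the weighted $C^k_{\nu,\epsilon}$ norms carefully. The prefactor $\epsilon^{2-\tau}$ on the right-hand side is essentially tautological in view of the scaling of the inputs $(\epsilon^{2-\tau}v, \epsilon^{2-\tau}\eta)$; the real content is that the construction preserves weighted $C^k$ bounds uniformly in $\epsilon$, with the shift of one in the weight index reflecting unit-time integration of an ODE whose coefficients vanish on the appropriate stratum.

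I would first construct $\Phi_\epsilon$ in two sub-steps, mirroring Lemma \ref{S_2_Lemma_Deformation}. Step 1: extend the normal section $\epsilon^{2-\tau}v$ off $\Sigma_\epsilon$ via parallel transport, multiply by a cutoff whose scale matches $\rho_\epsilon$ pointwise (scale $d$ near $\Sigma_0$ and scale $\epsilon d$ near each $\Psi_q^{-1}\Sigma_q$), and let $\phi_0$ be the time-$1$ flow; by construction $\phi_0^*\widetilde{\Sigma} = \Sigma_\epsilon$. Step 2: with $g_t = g_\epsilon + t\epsilon^{2-\tau}\eta$, set $\widetilde{\psi}_t = \exp_{g_\epsilon}\circ \exp_{g_t}^{-1}$, cut off $Y_t = \frac{d}{dt}\widetilde{\psi}_t$ against the same regional scales, integrate to get $\psi$, and define $\Phi_\epsilon = \phi_0\circ \psi$. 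To prove the $C^k_{\tau-1,\epsilon}$ bound on $\Phi_\epsilon - Id$, I work region by region using the partition-of-unity framework underlying Proposition \ref{S_4_Prop_Smoothing}. On $M_d$ this is standard ODE theory in the rescaled balls $B_{\rho_\epsilon(x_i)}(x_i)$; on $U_{0,2d}$ the unweighted Lemma \ref{S_2_Lemma_Deformation} applies verbatim; near each $\Psi_q^{-1}\Sigma_q$, pulling back by $\Psi_q$ rescales the metric to $\epsilon^{-2}g_\epsilon$ on a fixed tubular neighborhood of $\Sigma_q \subset \R^n$, where again the unweighted lemma applies and the $\epsilon^{-\nu}$ factor in Definition \ref{S_3_Defn_W_Holder_2} absorbs the rescaling. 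This yields the first estimate for $k \leq 3m$, and interpolation via Corollary \ref{S_4_Cor_Interpolation} then extends it to all relevant $k$.

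For the tame estimates on $\mathcal{G}^\epsilon$ and its derivatives, I would decompose
\begin{equation*}
\epsilon^{\tau-2}\big(\Phi_\epsilon^*\tilde g - g_\epsilon\big) = \Phi_\epsilon^*\eta + \epsilon^{\tau-2}\big(\Phi_\epsilon^*g_\epsilon - g_\epsilon\big),
\end{equation*}
and recognize the right-hand side as a tame composition of $\eta$, $\Phi_\epsilon$, and $g_\epsilon$. The pullback term is estimated by pushing the $C^k_{\tau-2,\epsilon}$ norm of $\eta$ through $\Phi_\epsilon$ using the first estimate; the Lie-derivative-type term is controlled by the weighted norms of the generating vector field of $\Phi_\epsilon$ acting on the background metric. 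The first and second derivatives in $(v,\tilde g)$ are computed by differentiating the two-step ODE construction in its parameters; in each case the output is smooth and tame because the construction is local and uses only the partition of unity above, so the smoothing machinery of Section \ref{S_4} applies on each piece separately. The main obstacle, and the reason this does not reduce verbatim to Lemma \ref{S_2_Lemma_Deformation}, is ensuring that the cutoffs joining the three regions do not introduce $\epsilon$-dependent blow-ups in the $C^k_{\tau-1,\epsilon}$ norm; matching the cutoff scales to $\rho_\epsilon$ is precisely what prevents this, and is the reason the same partition-of-unity framework reappears here.
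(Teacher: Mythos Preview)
Your proposal is correct and follows essentially the same route as the paper: apply the unweighted Lemma \ref{S_2_Lemma_Deformation} in each tubular neighborhood $U_{0,2d}$ (with metric $g_{\epsilon}$) and $U_{q,2d}$ (with rescaled metric $\epsilon^{-2}g_{\epsilon}$), then use the interpolation formula from Corollary \ref{S_4_Cor_Interpolation} to pass from the product-type estimates to clean tame estimates. One simplification you can make explicit: since the cutoffs in both steps are supported in $U_{0,2d}\cup\bigcup_q U_{q,2d}$, the diffeomorphism $\Phi_{\epsilon}$ is exactly the identity on $M_{2d}$, so there is nothing to estimate there and your discussion of ``standard ODE theory on $M_d$'' is vacuous.
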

			\begin{proof}
				Applying Lemma \ref{S_2_Lemma_Deformation} on each model space with respect to the rescaled metric $\rho_{\epsilon}(x_{i})^{-2}g_{\epsilon}$, we construct the desired diffeomorphism $\Phi_{\epsilon}$. In particular, $\Phi_{\epsilon}=Id$ on $M_{2d}$, it suffices to work within $U_{0,2d}$ with metric $g_{\epsilon}$ and $U_{q,2d}$ with metric $\epsilon^{-2}g_{\epsilon}$. In estimating the $C^{k}_{\tau-1,\epsilon}$ norms for $\Phi_{\epsilon}-Id$, we will also have
					\begin{equation*}
						\|\Phi_{\epsilon}-Id\|_{C^{k}_{\tau-1,\epsilon}}\leq \epsilon^{2-\tau}C_{k}\big(\|v\|_{C^{k}_{\tau-1,\epsilon}}+\sum_{p=0}^{k}\|v\|_{C^{k-p}_{\tau-1,\epsilon}}\|\eta\|_{C^{p}_{\tau-2,\epsilon}}\big).
					\end{equation*}
				Interpolation formula together with Young's inequality yields the bound. Similar computation also applies to $\mathcal{G}^{\epsilon}$, and the tame estimates hold. On the other hand, the estimate for the differential and the Hessian can also be obtained through direct computation.
			\end{proof}
			
		We define a map
			\begin{equation*}
				\bs{A}^{\epsilon}: \mathcal{M}^{\epsilon}_{0}\to C^{\infty}_{\tau-2,\epsilon}(\Sigma_{\epsilon}, N\Sigma_{\epsilon}^{-\frac{1}{2}}), \eta \mapsto \epsilon^{\tau-2}\underline{A}^{\epsilon}(s_{\epsilon}(g_{\epsilon}+\epsilon^{2-\tau}\eta)),
			\end{equation*}
		and a map
			\begin{equation*}
				\bs{F}^{\epsilon}: C^{\infty}_{\tau-2,\epsilon}(\Sigma_{\epsilon},N\Sigma_{\epsilon})\to C^{\infty}_{\tau-2,\epsilon}(\Sigma_{\epsilon},N\Sigma_{\epsilon}^{-\frac{1}{2}}), v\mapsto \bs{A}^{\epsilon}(\mathcal{G}^{\epsilon}(v,g)).
			\end{equation*}
		We will apply the Nash-Moser-Hamilton-Zehnder implicit function in Theorem \ref{S_App_Thm_HM} 
		to solve the following equation
			\begin{equation*}
				\bs{F}^{\epsilon}(v)=0.
			\end{equation*}
		When $\epsilon$ is sufficiently small, we obtain a non-degenerate $\Z_{2}$-harmonic $1$-form with respect to the original metric $g$
			\begin{equation*}
				\alpha_{\epsilon}=(\Phi_{\epsilon}^{-1})^{*}\bigg(\widetilde{\alpha}_{\epsilon}+\ud^{\star}_{\Phi^{*}g}\big(W(\Phi^{*}g)s_{\epsilon}\big)\bigg),
			\end{equation*}
			whose branching set is $\Phi_{\epsilon}^{-1}(\Sigma_{\epsilon})$.\\
					
		For simplicity, in the following sections, we will use $\|\cdot\|_{k}$ to denote the weighted Fr\'echet norms when there is no ambiguity. In order to apply the Hamilton-Nash-Moser-Zehnder implicit function theorem, we need to consider the following properties of the $\bs{F}^{\epsilon}$ map.
		
		\begin{theorem}\label{S_4_Thm_tamemap}
			Let $m\geq K$ be a large constant. In the region $\|v\|_{3m}\leq \delta$, let $\epsilon\ll 1$, then the map $\bs{F}^{\epsilon}$ satisfies
				\begin{equation}\label{S_4_Eqn_Tame_1}
					\begin{split}
						&\|\bs{F}^{\epsilon}(v)\|_{k}\leq C_{k}(1+\|v\|_{k+m}),\;\forall k\geq 0\\
						& \|D\bs{F}^{\epsilon}(v)(v_{1})\|_{k}\leq C_{k}\|v\|_{k+m/2}, \;\forall 0\leq  k\leq 2m,\\
						&\|D^{2}\bs{F}^{\epsilon}(v)(v_{1},v_{2})\|_{k}\leq C_{k}\|v_{1}\|_{k+m/2}\|v_{2}\|_{k+m/2}, \forall 0\leq k\leq 2m.
					\end{split}
				\end{equation}
			The differential $D\bs{F}^{\epsilon}$ admits an approximate inverse
				\begin{equation*}
					V(v,\bs{F}^{\epsilon}): C^{\infty}_{\tau-2,\epsilon}(\Sigma_{\epsilon},N\Sigma_{\epsilon}^{-\frac{1}{2}})\to C^{\infty}_{\tau-2,\epsilon}(\Sigma_{\epsilon},N\Sigma_{\epsilon}),
				\end{equation*}
			such that
				\begin{equation}\label{S_4_Eqn_Tame_2}
					\begin{split}
						&V\circ D \bs{F}^{\epsilon}=Id+Q_{1}(v, \bs{F}^{\epsilon}),\\
						& D \bs{F}^{\epsilon}\circ V=Id+Q_{2}(v, \bs{F}^{\epsilon}),\\
						&\|Q_{1}(v,\bs{F}^{\epsilon})(v_{1})\|_{2m}\leq C \|\bs{F}^{\epsilon}\|_{3m}\|v_{1}\|_{3m},\\
						&\|Q_{2}(v,\bs{F}^{\epsilon})(a_{1})\|_{2m}\leq C \|\bs{F}^{\epsilon}\|_{3m}\|a_{1}\|_{3m},\\
						&\|V(v,\bs{F}^{\epsilon})(a_{1})\|_{k}\leq C \|a_{1}\|_{k+m/2},\forall 0 \leq k\leq m,\\
						&\|V(v,\bs{F}^{\epsilon})(a_{1})\|_{k}\leq C_{k}\big( \|a_{1}\|_{k+m}+\|v\|_{k+m}\|a_{1}\|_{2m}\big),\forall  k\geq m.
					\end{split}
				\end{equation}
			Here, $C, C_{k}$ are constants independent of $\epsilon$ and $m$.
		\end{theorem}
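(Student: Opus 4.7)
The plan is to prove \ref{S_4_Eqn_Tame_1} and \ref{S_4_Eqn_Tame_2} in three stages, extracting most of the content by composition of estimates already in hand.

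\textbf{Zeroth-order and derivative tame bounds.} Since $\bs{F}^\epsilon(v) = \bs{A}^\epsilon(\mathcal{G}^\epsilon(v,g))$, composing the tame estimate $\|\mathcal{G}^\epsilon(v,g)\|_k \leq C_k\|v\|_{k+1}$ from Lemma \ref{S_4_Lemma_Deformation} with Proposition \ref{S_4_Prop_AB_map}'s $C^k_{\tau-2,\epsilon}$ tame bound on $\epsilon^{\tau-2}\underline{A}^\epsilon$ gives $\|\bs{F}^\epsilon(v)\|_k \leq C_k(\epsilon^{(n-2+\tau)\sigma} + \|v\|_{k+K+1})$; choosing $m\geq K+1$ yields the first line of \ref{S_4_Eqn_Tame_1}. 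To linearize $\bs{A}^\epsilon$ in the metric variable, I differentiate the defining Poisson equation \ref{S_3_Eqn_Poisson}: formally this produces a linear Poisson-type equation for $Ds_\epsilon(\tilde{g})(\eta)$ whose source is linear in $\eta$ and polynomial in $s_\epsilon$ and $\tilde{\alpha}_\epsilon$. Proposition \ref{S_3_Prop_linear_est} turns this into a tame bound on $Ds_\epsilon$, and hence on $D\bs{A}^\epsilon$; composing with Lemma \ref{S_4_Lemma_Deformation}'s tame derivative estimate for $\mathcal{G}^\epsilon$ gives $D\bs{F}^\epsilon$. The Hessian estimate follows by differentiating once more and splitting the quadratic source terms symmetrically between two $C^{k+m/2}$ factors via the interpolation formula of Corollary \ref{S_4_Cor_Interpolation}; the hypothesis $\|v\|_{3m}\leq \delta$ absorbs the larger factor into a constant.

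\textbf{Approximate inverse.} The crux is the algebraic observation that normal deformations of the branching set act on the asymptotic coefficient $A$ by multiplication by $B$. Indeed, Donaldson's expansion $\alpha_\epsilon\sim d\bigl(\RRe{A(t)\zeta^{1/2}+B(t)\zeta^{3/2}}\bigr)$ near $\Sigma_\epsilon$ shows that the substitution $\zeta\mapsto \zeta-v_1$ shifts the $A$-coefficient by $\tfrac{3}{2}Bv_1$ to leading order. Consequently the principal part of $D\bs{F}^\epsilon(v)$ is the zero-order algebraic operator $v_1\mapsto \tfrac{3}{2}B_\epsilon\cdot v_1$, with $B_\epsilon=\underline{B}^\epsilon(s_\epsilon(\Phi_\epsilon^*g))$. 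The non-degeneracy hypothesis forces $|B|\geq c>0$ uniformly on $\Sigma_0$ and on each $\Sigma_q$, while Proposition \ref{S_4_Prop_AB_map} controls the perturbation $B_\epsilon-B$ tamely; hence $B_\epsilon^{-1}$ exists pointwise with a uniform bound once $\epsilon$ and $\|v\|_{3m}$ are small. I then set
\begin{equation*}
V(v,\bs{F}^\epsilon)(a) := \tfrac{2}{3} B_\epsilon^{-1}\cdot a,
\end{equation*}
a multiplication operator whose tame bounds in \ref{S_4_Eqn_Tame_2} are produced by direct product estimates: the small-$k$ estimate uses $|B_\epsilon^{-1}|_{C^0}$ being bounded, and the large-$k$ estimate uses the smoothing of Proposition \ref{S_4_Prop_Smoothing} together with interpolation to split the $C^{k+m}$ norm of $B_\epsilon^{-1}\cdot a$ into $\|B_\epsilon^{-1}\|_{k+m}\|a\|_{2m}$ plus $\|B_\epsilon^{-1}\|_{2m}\|a\|_{k+m}$, the first factor being tame in $\|v\|_{k+m}$ by Proposition \ref{S_4_Prop_AB_map}.

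\textbf{Main obstacle.} The principal technical difficulty is showing that the remainders $Q_i$ are linear in $\bs{F}^\epsilon(v)$ rather than merely $O(\epsilon^{(n-2+\tau)\sigma})$. The subleading contributions to $D\bs{F}^\epsilon(v)$ come from (a) the variation of $s_\epsilon$ when $\Sigma_\epsilon$ is deformed by $v_1$, and (b) the gauge pullback through $\Phi_\epsilon$. At a genuine zero $v_*$ of $\bs{F}^\epsilon$, the 1-form $\alpha_\epsilon(v_*)$ is honestly $\Z_2$-harmonic and the linearization of the honest $\Z_2$-harmonic problem at $v_*$ is essentially the multiplication by $B_\epsilon$ picked out above; the subleading contributions must therefore vanish. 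Extracting an explicit factor of $\bs{F}^\epsilon(v)$ from these contributions---by writing them as integrals of derivatives along a path from $v$ to a hypothetical exact solution and commuting with $V$---is the main technical obstacle. Once the factorization is in place, the claimed tame bounds for $Q_1, Q_2$ and the sharp tame estimate for $V$ at large $k$ follow by the same composition and interpolation arguments used in Steps 1--2.
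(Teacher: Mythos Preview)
Your first two stages are essentially the paper's approach: the tame estimate for $\bs{F}^\epsilon$ comes by composing Lemma~\ref{S_4_Lemma_Deformation} with Proposition~\ref{S_4_Prop_AB_map}, the derivative estimates by differentiating the Poisson equation (this is the content of Lemma~\ref{S_4_Lemma_Tame}), and the approximate inverse is indeed multiplication by $\tfrac{2}{3}(\bs{B}+\underline{B}^\epsilon)^{-1}$.

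The genuine gap is in your third stage. You propose to extract the factor of $\bs{F}^\epsilon$ in $Q_i$ by ``integrating along a path to a hypothetical exact solution''---but no such solution is known to exist yet (that is precisely what the theorem is setting up to prove), so this is circular. The paper instead computes $D\bs{F}^\epsilon$ \emph{exactly}, obtaining the closed formula
\[
D\bs{F}^{\epsilon}(\bs{v})=\tfrac{3}{2}\big(\bs{B}+\underline{B}^{\epsilon}(s_{\epsilon})\big)\cdot \bs{v}-\tfrac{1}{2}P\big(\underline{A}(s_{\epsilon})\cdot \bs{v}\big)-\tfrac{\epsilon^{2-\tau}}{2}\bs{F}^{\epsilon}\cdot\langle\mu,\bs{v}\rangle,
\]
in which the two subleading terms visibly carry a factor of $\underline{A}(s_\epsilon)$ or $\bs{F}^\epsilon$; since by definition $\bs{F}^\epsilon=\epsilon^{\tau-2}\underline{A}^\epsilon(s_\epsilon)$, both are already linear in $\bs{F}^\epsilon$ and the quadratic bounds on $Q_1,Q_2$ follow at once (Lemma~\ref{S_4_Lemma_Tame_V}). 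Reaching this formula is where the real work lies, and it requires an ingredient you have not identified: one must construct, for each $\bs{\sigma}\in\Gamma(\Sigma_\epsilon,N\Sigma_\epsilon^{1/2})$, the unique $\widetilde{\Delta}_{\tilde g}$-harmonic section $\widetilde{Q}_\epsilon$ with a prescribed $\sigma\zeta^{-1/2}$ pole along $\Sigma_\epsilon$, and read off its $\zeta^{1/2}$ coefficient to define the nonlocal operator $P$ (Proposition~\ref{S_4_Prop_L2Sol}). The derivative computation then proceeds by recognizing that $\partial_t\Phi-\nabla_{\tilde v}(\Phi_0+w_\epsilon)$ is harmonic with a $\tfrac12\underline{A}(s_\epsilon)\tilde v\,\zeta^{-1/2}$ pole, so it must equal $\widetilde{Q}_\epsilon$ for $\bs{\sigma}=\tfrac12\underline{A}(s_\epsilon)\cdot\bs{v}$; matching the $\zeta^{1/2}$ coefficients yields the formula above. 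Your heuristic $\zeta\mapsto\zeta-v_1$ captures only the $\tfrac32 B\cdot v_1$ piece and misses this mechanism entirely.
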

		
		The map $\bs{F}^{\epsilon}=\bs{A}^{\epsilon}\circ \mathcal{G}^{\epsilon}$ factors through $\mathcal{M}^{\epsilon}_{0}$. We have already proved the tame estimates for the $\mathcal{G}^{\epsilon}$ in Lemma \ref{S_4_Lemma_Deformation} and established tame estimates for $\bs{A}^{\epsilon}$ in Lemma \ref{S_4_Lemma_Tame}. Therefore, to prove the Equation \ref{S_4_Eqn_Tame_1}, it suffices to prove the estimates for the $D\bs{A}^{\epsilon}$ and $D^{2}\bs{A}^{\epsilon}$.
		\begin{lemma}\label{S_4_Lemma_Tame}
			In the region $\|\eta\|_{C^{2m}_{\tau-2,\epsilon}}\leq \delta$, the map $\bs{A}^{\epsilon}$ satisfies
				\begin{equation*}
					\begin{split}
						& \|D\bs{A}^{\epsilon}(v)(v_{1})\|_{k}\leq C_{k}\|v\|_{k+3}, \;\forall 0\leq  k\leq 2m,\\
						&\|D^{2}\bs{A}^{\epsilon}(v)(v_{1},v_{2})\|_{k}\leq C_{k}\|v_{1}\|_{k+3}\|v_{2}\|_{k+3}, \forall 0\leq k\leq 2m.
					\end{split}
				\end{equation*}
		\end{lemma}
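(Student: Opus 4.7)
The plan is to linearize the defining Poisson equation \ref{S_3_Eqn_Poisson} and combine the weighted Schauder estimate of Proposition \ref{S_3_Prop_linear_est} with the boundedness of the asymptotic map $\underline{A}^{\epsilon}$ from Definition \ref{S_3_Defn_Sheaf_Map}. Write $\tilde{g}(\eta)=g_{\epsilon}+\epsilon^{2-\tau}\eta$ and abbreviate $s_{\epsilon}(\eta):=s_{\epsilon}(\tilde{g}(\eta))$, so that $\bs{A}^{\epsilon}(\eta)=\epsilon^{\tau-2}\underline{A}^{\epsilon}(s_{\epsilon}(\eta))$. Differentiating $\widetilde{\Delta}_{\tilde{g}(\eta)}s_{\epsilon}(\eta)=W^{-1}(\tilde{g}(\eta))\,\ud^{*}_{\tilde{g}(\eta)}\tilde{\alpha}_{\epsilon}$ in a direction $\eta_{1}$ yields
\begin{equation*}
\widetilde{\Delta}_{\tilde{g}(\eta)}\bigl(Ds_{\epsilon}(\eta)(\eta_{1})\bigr) = \epsilon^{2-\tau}\,\mathcal{R}_{1}\bigl(\eta,s_{\epsilon}(\eta);\eta_{1}\bigr),
\end{equation*}
in which $\mathcal{R}_{1}$ is linear in the first jet of $\eta_{1}$, involves up to second derivatives of $s_{\epsilon}$ (from varying the leading coefficients of $\widetilde{\Delta}$), and a bounded inhomogeneous term coming from $W^{-1}\ud^{*}\tilde{\alpha}_{\epsilon}$.

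Next, I would apply Proposition \ref{S_3_Prop_linear_est} to estimate $\|Ds_{\epsilon}(\eta)(\eta_{1})\|_{\mathcal{C}^{k+2,\alpha}_{\tau,\epsilon}}$ by $\|\widetilde{\Delta}_{\tilde{g}(\eta)}Ds_{\epsilon}\|_{\mathcal{D}^{k,\alpha}_{\tau-2,\epsilon}}$. Because $\|\gamma\|_{C^{3K,\alpha}_{0,\epsilon}}=\epsilon^{2-\tau}\|\eta\|_{C^{3K,\alpha}_{0,\epsilon}}$ is small under the hypothesis $\|\eta\|_{C^{2m}_{\tau-2,\epsilon}}\leq\delta$, the prefactor $(1+\|\gamma\|)^{k}$ is bounded uniformly in $\epsilon$. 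Lemma \ref{S_3_Lemma_Error_gluing} furnishes a uniform bound on $\|s_{\epsilon}(\eta)\|_{\mathcal{C}^{k+2,\alpha}_{\tau,\epsilon}}$ in the stated range, and a pointwise estimate of $\mathcal{R}_{1}$ in each of the regions $M_{d}$, the gluing annuli, and the tubular neighborhoods $U_{0,2d}$, $U_{q,2d}$ — carried out exactly as in the proof of Lemma \ref{S_3_Lemma_Error_gluing} — gives $\|\mathcal{R}_{1}\|_{\mathcal{D}^{k,\alpha}_{\tau-2,\epsilon}}\leq C_{k}\|\eta_{1}\|_{k+2}$. Composing with the bounded linear asymptotic map $\underline{A}^{\epsilon}:\mathcal{C}^{k+2,\alpha}_{\tau,\epsilon}\to C^{k+1,\alpha+1/2}_{\tau-2,\epsilon}$ of Definition \ref{S_3_Defn_Sheaf_Map}, and using $C^{k+1,\alpha+1/2}_{\tau-2,\epsilon}\hookrightarrow C^{k}_{\tau-2,\epsilon}$, produces the claimed bound with total derivative loss at most $3$.

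For the Hessian, differentiate the linearized equation once more in $\eta_{2}$ to obtain a Poisson-type equation for $D^{2}s_{\epsilon}(\eta)(\eta_{1},\eta_{2})$ whose right-hand side is bilinear in $(\eta_{1},\eta_{2})$ and contains, besides the term bilinear in the first jets of $\eta_{1},\eta_{2}$ and linear in $s_{\epsilon}$, cross terms of the shape $\bigl(\partial_{\eta}\widetilde{\Delta}(\eta_{j})\bigr)\cdot Ds_{\epsilon}(\eta)(\eta_{i})$. Inserting the Step 2 bound on $Ds_{\epsilon}(\eta)(\eta_{j})$ and reapplying Proposition \ref{S_3_Prop_linear_est} and then $\underline{A}^{\epsilon}$ yields the bilinear tame estimate with the same fixed loss of $3$ derivatives.

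The main obstacle will be keeping the Schauder constants and the pointwise bounds on $\mathcal{R}_{1},\mathcal{R}_{2}$ genuinely uniform in $\epsilon$ across the family of rescaled geometries. The rescaling $\gamma=\epsilon^{2-\tau}\eta$, the weighted structure built around $\rho_{\epsilon}$ and $r_{\epsilon}$, and the fact that on each model region $\widetilde{\Delta}_{\tilde{g}(\eta)}$ sits uniformly close to an admissible flat Laplacian conspire to make this uniformity automatic; the care lies in splitting $\mathcal{R}_{1},\mathcal{R}_{2}$ into pieces supported in the gluing annuli, near $\Sigma_{0}$, and near each $\Psi_{q}^{-1}\Sigma_{q}$, and treating each piece with the admissible-operator Schauder estimate in its own rescaled frame.
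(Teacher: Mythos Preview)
Your proposal is correct and follows essentially the same route as the paper: differentiate the Poisson equation \ref{S_3_Eqn_Poisson}, apply the weighted Schauder estimate of Proposition \ref{S_3_Prop_linear_est} to bound $\partial_{t}s_{\epsilon}$ in $\mathcal{C}^{k+2,\alpha}_{\tau,\epsilon}$, and then compose with the bounded linear map $\underline{A}^{\epsilon}$. The paper packages the source term slightly differently by introducing a local primitive $w_{\epsilon}$ with $\ud w_{\epsilon}=-\tilde{\alpha}_{\epsilon}$, so that the linearized equation reads $\widetilde{\Delta}_{\tilde{g}}\,\partial_{t}s_{\epsilon}=-(\partial_{t}\widetilde{\Delta})(s_{\epsilon}+w_{\epsilon})$ with $\partial_{t}\widetilde{\Delta}\in\mathcal{T}_{2}$; this makes the tangentiality of the source near $\Sigma_{\epsilon}$ explicit, but is exactly your $\mathcal{R}_{1}$ in different notation.
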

		\begin{proof}
			Let $g_{t}=\tilde{g}+t\epsilon^{2-\tau}\eta_{1}\in \mathcal{M}^{\epsilon}_{0}$ and let $\p_{t}\widetilde{\Delta}_{t}$ be the derivatives of $\widetilde{\Delta}_{g_{t}}$ with respect to $t$ at $t=0$.  We also let $\p_{t}s_{\epsilon}$ be the derivatives of $s_{\epsilon}(g_{t})$. For simplicity, we write $w_{\epsilon}$ to denote a multivalued section in the affine $\R$-bundle on $M\setminus \Sigma_{\epsilon}$, such that
				\begin{equation*}
					\ud w_{\epsilon}=-\tilde{\alpha}_{\epsilon}.
				\end{equation*}
			Then $\p_{t} s_{\epsilon}$ satisfies
				\begin{equation*}
					\widetilde{\Delta}_{g_{t}}\p_{t}s_{\epsilon}+\big(\p_{t}\widetilde{\Delta}_{g_{t}}\big)(s_{\epsilon}+w_{\epsilon})=0.
				\end{equation*}
			Once we have a $\mathcal{D}^{k,\alpha}_{\tau-2,\epsilon}$ bound on $(\p_{t}\widetilde{\Delta}_{g_{t}})(s_{\epsilon}+w_{\epsilon})$, we will then have a $C^{k+1}_{\tau-2,\epsilon}$ bound on $\bs{A}^{\epsilon}$. On the region $U_{0,2r}$ and on $U_{q,2r}$, the Laplacian $\widetilde{\Delta}_{g_{t}}=\widetilde{\Delta}_{\tilde{g}}+\mathcal{L}_{t\epsilon^{2-\tau}\eta_{1}}$, then the derivatives
				\begin{equation*}
					\p_{t}\widetilde{\Delta}_{t}=\p_{t}\mathcal{L}_{t\epsilon^{2-\tau}\eta_{1}}\in \mathcal{T}_{2}.
				\end{equation*}
			With respect to the conformal metric $\rho_{\epsilon}^{-2}g_{\epsilon}$, the coefficients $\mathcal{F}$ of $\p_{t}\mathcal{L}$ satisfy
				\begin{equation*}
						\|\mathcal{F}\|_{C^{k,\alpha}(U_{0,2r})}\leq \|\epsilon^{2-\tau}\eta_{1}\|_{C^{k+m,\alpha}}.
				\end{equation*}
			Similar to the proof of Proposition \ref{S_3_Prop_linear_est}, we conclude that
				\begin{equation*}
					\|\epsilon^{\tau-2} s_{\epsilon}\|_{\mathcal{C}^{k+2,\alpha}_{\tau,\epsilon}}\leq C_{k}\|\eta_{1}\|_{C^{k+m}_{\tau-2,\epsilon}}, \forall 0\leq k\leq 2m. 
				\end{equation*}
				
			We now derive uniform estimates for the second order derivatives. Let $g_{t_{1},t_{2}}=\tilde{g}+t_{1}\epsilon^{2-\tau}\eta_{1}+t_{2}\epsilon^{2-\tau}\eta_{2}$, then $\p_{t_{1}}\p_{t_{1}}s_{\epsilon}$ satisfies
				\begin{equation*}
					\widetilde{\Delta}_{\tilde{g}}(\p_{t_{1}}\p_{t_{1}}s_{\epsilon})+\big(\p_{t_{1}}\widetilde{\Delta}\big)(\p_{t_{2}}s_{\epsilon})+\big(\p_{t_{2}}\widetilde{\Delta}\big)(\p_{t_{1}}s_{\epsilon})+\big(\p_{t_{1}}\p_{t_{2}}\widetilde{\Delta}\big)(s_{\epsilon}+w_{\epsilon})=0.
				\end{equation*}
				Similar arguments will lead to the estimates.
		\end{proof}
		
		A key observation by Donaldson is that the derivatives of $\bs{F}^{\epsilon}$ are invertible. To adapt this to our setting, we will need a weighted version of Donaldson's derivative formula.
			\begin{prop}\label{S_4_Prop_L2Sol}
				For any section $\bs{\sigma}=(\sigma_{0},\cdots,\sigma_{q}, \cdots)$ in $C^{\infty}_{\tau,\epsilon}(\Sigma_{\epsilon}, N\Sigma_{\epsilon}^{\frac{1}{2}})$, there exists a linear tame map
					\begin{equation*}
						P:C^{\infty}_{\tau,\epsilon}(\Sigma_{\epsilon},N\Sigma_{\epsilon}^{\frac{1}{2}})\to C^{\infty}_{\tau-2,\epsilon}(\Sigma_{\epsilon},N\Sigma_{\epsilon}^{-\frac{1}{2}}), \bs{\sigma}\mapsto (P_{0},\cdots,P_{q},\cdots),
					\end{equation*}
				and a unique section $\widetilde{Q}_{\epsilon}(\bs{\sigma})$ of $L^{\CC}_{\epsilon}$ over $M\setminus \Sigma_{\epsilon}$ such that $\widetilde{\Delta}_{\tilde{g}}\widetilde{Q}_{\epsilon}=0$ and $\widetilde{Q}_{\epsilon}$ admit the following asymptotic behaviors with respect to the conformal metric $\rho_{\epsilon}^{-2}g_{\epsilon}$
					\begin{equation*}
						\begin{split}
							&\widetilde{Q}_{\epsilon}=\sigma_{0}\zeta^{-\frac{1}{2}}+P_{0}\zeta^{\frac{1}{2}}+O(|\zeta|^{\frac{3}{2}}), \text{ on } U_{0,2r},\\
							&\widetilde{Q}_{\epsilon}=\sigma_{q}\zeta^{-\frac{1}{2}}+\epsilon^{2}P_{q}\zeta^{\frac{1}{2}}+O(|\zeta|^{\frac{3}{2}}), \text{ on } V_{q,2r}.
						\end{split}
					\end{equation*}
				Moreover, the map $P$ satisfies
					\begin{equation*}
						\|P(\bs{\sigma})\|_{k}\leq C_{k}(\|\bs{\sigma}\|_{k+4}+\|\eta\|_{k+m}\|\bs{\sigma}\|_{m}).
					\end{equation*}
			\end{prop}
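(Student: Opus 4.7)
The plan is to construct $\widetilde{Q}_{\epsilon}$ by first building a formal model with the prescribed leading singularity, then correcting it via the inverse Laplacian of Proposition \ref{S_3_Prop_linear_est}, and finally reading off the coefficient $P(\bs{\sigma})$ from the resulting asymptotic expansion.

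\textbf{Step 1 (Pre-gluing).} Pick cutoff functions $\chi_{0},\chi_{q}$ supported in $U_{0,2d}$ and $V_{q,2d}$, equal to $1$ on $U_{0,d}$ and $V_{q,d}$ respectively. Working in Fermi coordinates $(t,\zeta)$ compatible with the normal structure of $\Sigma_{\epsilon}$, set
\[
\widetilde{Q}^{(0)}_{\epsilon} \;=\; \chi_{0}\,\sigma_{0}(t)\,\zeta^{-1/2}\;+\;\sum_{q}\chi_{q}\,\sigma_{q}(t)\,\zeta^{-1/2},
\]
where on each $V_{q,2d}$ we work with the rescaled metric $\epsilon^{-2}g_{\epsilon}$ so that $\zeta$ is of unit size there. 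On the flat product model, $\zeta^{-1/2}$ is harmonic on the double cover, so $\widetilde{\Delta}_{\R^n}(\sigma(t)\zeta^{-1/2})=\Delta_{t}\sigma\cdot\zeta^{-1/2}$. This is still singular and lies outside the $\mathcal{D}^{k,\alpha}$ framework.

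\textbf{Step 2 (Improving the local expansion).} Inductively add terms of the form $\sigma_{j}(t)\,\zeta^{(2j-1)/2}$, $j=1,\dots,N$, choosing each $\sigma_{j}$ algebraically from $\sigma_{j-1}$ so that the partial sum is annihilated by $\widetilde{\Delta}_{\tilde{g}}$ up to order $O(\zeta^{N-1/2})$; the admissible structure $\widetilde{\Delta}_{\tilde{g}}=\Delta_{\R^{n}}+\mathcal{L}$ with $\mathcal{L}\in\mathcal{T}_{2}$ ensures the inductive solvability, since each commutator with $\zeta^{(2j-1)/2}$ only raises the power by an integer and mixes in tangential derivatives of the already-determined $\sigma_{i}$. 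Let $\widetilde{Q}^{(1)}_{\epsilon}$ denote the improved approximation (after multiplying each local expansion by the corresponding cutoff). Choosing $N$ large enough, the residual
\[
\rho \;:=\; -\widetilde{\Delta}_{\tilde{g}}\widetilde{Q}^{(1)}_{\epsilon}
\]
lies in $\mathcal{D}^{k,\alpha}_{\tau-2,\epsilon}$ uniformly in $\epsilon$, with contributions from the cutoff region (bounded smooth) and from the truncated tail (of order $\zeta^{N-5/2}$, hence in the weighted space for $N$ large).

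\textbf{Step 3 (Correction and $P$-map).} Apply Proposition \ref{S_3_Prop_linear_est} with $\tau<0$ to solve $\widetilde{\Delta}_{\tilde{g}}g=\rho$ with $\|g\|_{\mathcal{C}^{k+2,\alpha}_{\tau,\epsilon}}\leq C_{k}(1+\|\gamma\|_{C^{k+K,\alpha}_{0,\epsilon}})^{k}\|\rho\|_{\mathcal{D}^{k,\alpha}_{\tau-2,\epsilon}}$. Set $\widetilde{Q}_{\epsilon}:=\widetilde{Q}^{(1)}_{\epsilon}+g$. By construction $\widetilde{\Delta}_{\tilde{g}}\widetilde{Q}_{\epsilon}=0$ and the $\zeta^{-1/2}$ coefficient is $\sigma$ since $g\in\mathcal{C}^{k+2,\alpha}_{\tau,\epsilon}$ is genuinely smaller. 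Define $P_{0}(\bs{\sigma}),P_{q}(\bs{\sigma})$ to be the $\zeta^{1/2}$ coefficient of $\widetilde{Q}_{\epsilon}$ on $\Sigma_{0}$ and $\Psi_{q}^{-1}\Sigma_{q}$ respectively; concretely this is the sum of the $\sigma_{1}$ from Step 2 and $\underline{A}^{\epsilon}(g)$ from Definition \ref{S_3_Defn_Sheaf_Map}, with the built-in $\epsilon^{-2}$ rescaling on the $V_{q}$ components producing the stated factor $\epsilon^{2}P_{q}$.

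\textbf{Step 4 (Uniqueness and tame estimates).} Uniqueness follows from the same argument as in Proposition \ref{S_3_Prop_linear_est}: the difference of two solutions has a strictly milder singularity at $\Sigma_{\epsilon}$, so lifting to the branched double cover and conformally renormalizing gives an ordinary harmonic function vanishing along the lifted $\Sigma_{0}$ and near each $p_{q}',p_{q}''$, which must vanish by the maximum principle. For the tame estimate, the Step 2 contribution to $P$ is a differential operator of fixed order in $\sigma$ with coefficients depending on the $2$-jet of $\tilde{g}$, so it is tame with loss $4$ in $\sigma$ and loss $m$ in $\eta$. The contribution $\underline{A}^{\epsilon}(g)$ is controlled by combining the tame version of the Schauder estimate (as in Proposition \ref{S_4_Prop_AB_map}) with the dependence of $\rho$ on $\sigma$ and $\eta$.

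The main obstacle is the inductive construction in Step 2: one must verify that each algebraic step is solvable on the nose (the obstruction lives in the kernel of multiplication by half-integer powers, which is trivial because these powers are not indicial roots at the regular levels we are filling) and that the residual $\rho$, after $N$ steps, decays fast enough at $\Sigma_{\epsilon}$ to land in $\mathcal{D}^{k,\alpha}_{\tau-2,\epsilon}$ uniformly in $\epsilon$. Once that is in hand, the tame estimate for $P$ reduces cleanly to the estimates already established for $\underline{A}^{\epsilon}$ and $\mathcal{G}^{\epsilon}$.
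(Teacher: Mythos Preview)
Your overall architecture---build a local model with the prescribed $\zeta^{-1/2}$ singularity, push the residual into $\mathcal{D}^{k,\alpha}_{\tau-2,\epsilon}$, invert the Laplacian via Proposition~\ref{S_3_Prop_linear_est}, and read off $P$ through $\underline{A}^{\epsilon}$---is exactly the paper's. The gap is in Step~2.

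The ansatz $\sigma_{j}(t)\,\zeta^{(2j-1)/2}$ cannot cancel the singular residual. On the flat model $\Delta_{\R^{n}}(\sigma_{0}\zeta^{-1/2})=(\Delta_{t}\sigma_{0})\,\zeta^{-1/2}$, and more generally the admissible operator gives $\widetilde{\Delta}_{\tilde g}(\sigma_{0}\zeta^{-1/2})=\beta_{0}\,\zeta^{-1/2}+w_{0}$ with $\beta_{0}=(\nabla^{*}\nabla+\kappa)\sigma_{0}$ and $w_{0}\in\mathcal{D}^{\infty,\alpha}$ (this is Donaldson's Proposition~10). The obstruction term $\beta_{0}\zeta^{-1/2}$ lives in the angular mode $e^{-i\theta/2}$. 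But every $\zeta^{(2j-1)/2}$ is \emph{harmonic} for $\Delta_{\CC}$, so $\widetilde{\Delta}(\sigma_{j}\zeta^{(2j-1)/2})$ produces only terms of order $r^{(2j-1)/2}$ and higher---never a $\zeta^{-1/2}$ contribution. No finite (or infinite) sum of such terms removes $\beta_{0}\zeta^{-1/2}$, and the residual $\rho$ stays outside $\mathcal{D}^{k,\alpha}_{\tau-2,\epsilon}$. Your remark about indicial roots is pointing at the right phenomenon but in the wrong coordinates: the issue is that $\zeta^{1/2}$ sits in the angular mode $e^{+i\theta/2}$, orthogonal to the mode you need to correct.

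The paper's fix is a single polyhomogeneous correction in the \emph{same} angular mode: since $\Delta_{\CC}\big(\tfrac{1}{4}r^{2}\zeta^{-1/2}\big)=\zeta^{-1/2}$, one subtracts $\tfrac{1}{4}r^{2}\beta_{0}\zeta^{-1/2}$ and obtains $\widetilde{\Delta}\big(\sigma_{0}\zeta^{-1/2}-\tfrac{1}{4}r^{2}\beta_{0}\zeta^{-1/2}\big)=w_{0}-w_{0}'\in\mathcal{D}^{k,\alpha}$ directly, with no iteration. After this your Step~3 goes through verbatim. Note also that $r^{2}\zeta^{-1/2}=r^{3/2}e^{-i\theta/2}$ carries no $\zeta^{1/2}$ coefficient in the sense of Definition~\ref{S_2_Defn_AB}, so the formal part of $P$ vanishes and one has simply $P(\bs\sigma)=-\underline{A}^{\epsilon}(u_{\epsilon})$; the tame estimate then follows from the bounds on $w_{0},w_{0}'$ (which lose two derivatives of $\sigma$ each, hence the $+4$) together with Proposition~\ref{S_3_Prop_linear_est}.
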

			\begin{proof}
				We carry out the computation on $U_{0,2r}$ using $\tilde{g}$ and on the rescaled coordinate $U_{q,2r}$ using $\epsilon^{-2}\tilde{g}$. Let $r=dist_{\rho_{\epsilon}^{-2}g_{\epsilon}}(-,\Sigma_{\epsilon})$ be the distance function to the branching set in each model. Applying Proposition 10 in \cite{Donaldson17} to sections $\sigma_{0}\zeta^{-\frac{1}{2}}$ and $\sigma_{q}\zeta^{-\frac{1}{2}}$, we obtain
					\begin{equation*}
						\begin{split}
							&\widetilde{\Delta}_{\tilde{g}}\sigma_{0}\zeta^{-\frac{1}{2}}=\beta_{0}\zeta^{-\frac{1}{2}}+w_{0},\\
							&\widetilde{\Delta}_{\epsilon^{-2}\tilde{g}}\sigma_{q}\zeta^{-\frac{1}{2}}=\beta_{q}\zeta^{-\frac{1}{2}}+w_{q}.
						\end{split}
					\end{equation*}
				Here, $\beta_{j}=\big(\nabla^{*}\nabla +\kappa\big)\sigma_{j}, j=0,q$ and $w_{j}\in \mathcal{D}^{\infty,\alpha}$ in each geometry. Moreover, using the smoothing operator for $\mathcal{E}^{\epsilon}$ and $\mathcal{M}^{\epsilon}$, we have the following tame estimate
					\begin{equation*}
						\|w_{j}\|_{\mathcal{D}^{k,\alpha}}\leq C_{k}(\|\sigma_{j}\|_{C^{k+2,\alpha}}+\|\eta\|_{k+K}\|\sigma\|_{C^{K,\alpha}}), j=0,q.
					\end{equation*}
				On the other hand, we also have
					\begin{equation*}
						\widetilde{\Delta} (\frac{1}{4}r^{2}\beta_{j}\zeta^{-\frac{1}{2}})=\beta_{j}\zeta^{-\frac{1}{2}}+w'_{j},
					\end{equation*}
				where
					\begin{equation*}
						\|w'_{j}\|_{\mathcal{D}^{k,\alpha}}\leq C_{k}(\|\beta_{j}\|_{C^{k+2,\alpha}}+\|\eta\|_{k+K}\|\sigma\|_{C^{K,\alpha}}), j=0,q.
					\end{equation*}
				Now we can solve the equation
					\begin{equation*}
						\widetilde{\Delta}_{\tilde{g}}u_{\epsilon}=\sum_{j}	w_{j}-w'_{j}
					\end{equation*}
				with $u_{\epsilon}\in \mathcal{C}^{\infty,\alpha}_{\tau,\epsilon}$ satisfying the following estimate
					\begin{equation*}
						\|u_{\epsilon}\|_{\mathcal{C}^{k+2,\alpha}_{\tau,\epsilon}}\leq C_{k}\|\bs{\sigma}\|_{C^{k+4,\alpha}_{\tau,\epsilon}}.
					\end{equation*}
				Computation shows
					\begin{equation*}
						\widetilde{\Delta}_{\tilde{g}}\big(-u_{\epsilon}+\sum_{j}(\sigma_{j}\zeta^{-\frac{1}{2}}-\frac{1}{4}r^{2}\beta_{j}\zeta^{-\frac{1}{2}})\big)=0,
					\end{equation*}
				and we have the unique solution
					\begin{equation*}
						\widetilde{Q}_{\epsilon}=-u_{\epsilon}+\sum_{j}(\sigma_{j}\zeta^{-\frac{1}{2}}-\frac{1}{4}r^{2}\beta_{j}\zeta^{-\frac{1}{2}}).
					\end{equation*}
				We define the operator $P$ to be
					\begin{equation*}
						P:C^{\infty}_{\tau,\epsilon}(\Sigma_{\epsilon},N\Sigma_{\epsilon}^{\frac{1}{2}})\to C^{\infty}_{\tau-2,\epsilon}(\Sigma_{\epsilon},N\Sigma_{\epsilon}^{-\frac{1}{2}}), \bs{\sigma}\mapsto -\underline{A}^{\epsilon}(u_{\epsilon}),
					\end{equation*}
				and this proves the proposition.
			\end{proof}

		Suppose $\lambda_{t}$ is a family of diffeomorphisms with derivative at $t=0$ equal to $\tilde{v}$ such that $\tilde{v}|_{\Sigma_{\epsilon}}=\epsilon^{2-\tau}\bs{v}$, where 
			\begin{equation*}
				\bs{v}=(v_{0},v_{1},\cdots, v_{q},\cdots), \bs{v}\in C^{\infty}_{\tau-2,\epsilon}(\Sigma_{\epsilon},N\Sigma_{\epsilon}).
			\end{equation*}
		We can suppose that $\lambda_{t}$ preserves the normal structure induced by $\tilde{g}$. This condition is equivalent to the covariant derivatives of $\tilde{v}$ in the normal direction vanishing along $\Sigma_{\epsilon}$
			\begin{equation*}
				\nabla_{N}\tilde{v}=0 \text{ on } \Sigma_{\epsilon}.
			\end{equation*}
		
		For each $t $, we have the solution to the Poisson equation with a singular set $\Sigma_{\epsilon,t}=\lambda_{t}(\Sigma_{\epsilon})$ and metric $g_{t}$. Pulling back the solution, we obtain a solution $\Phi_{t}=W^{-1}(g_{t})s_{\epsilon}(g_{t})$ to the Poisson equation
			\begin{equation}\label{S_4_Eqn_Poisson_t}
				-\Delta_{g_{t}}\Phi_{t}+\ud^{*}_{g_{t}}\tilde{\alpha}_{\epsilon}=0.
			\end{equation}

		To simplify the problem, we again treat $\tilde{\alpha}_{\epsilon}$ as an exterior derivative of a section $w_{\epsilon}$ on a certain affine $\R$-bundle. In particular, $w_{\epsilon}$ becomes a $\Z_{2}$-function near each connected component of $\Sigma_{\epsilon}$. It follows from the construction that $w_{\epsilon}$ admits the asymptotic expansion on $U_{0,2d}$
			\begin{equation*}
				w_{\epsilon}= B(\alpha)\zeta^{\frac{3}{2}}+O(|\zeta|^{\frac{5}{2}}),
			\end{equation*}
		with respect to the metric $g_{\epsilon}$ and on each connected component of $V_{q,2d}$
			\begin{equation*}
				w_{\epsilon}= \epsilon^{2}\big(B(\alpha_{q})\zeta^{\frac{3}{2}}+O(|\zeta|^{\frac{5}{2}})\big),
			\end{equation*}
		with respect to the metric $\epsilon^{-2}g_{\epsilon}$. Again, let $\p_{t}\Delta$ be the derivative of $\Delta_{g_{t}}$ at $t=0$, and let $\p_{t}\Phi$ be the derivatives of $\Phi_{t}$. Differentiating the Poisson equation \ref{S_4_Eqn_Poisson_t}, we have
			\begin{equation*}
				(\p_{t}\Delta)(\Phi_{0}+w_{\epsilon})=\nabla_{\tilde{v}}\Delta(\Phi_{0}+w_{\epsilon})-\Delta\big(\nabla_{\tilde{v}}(\Phi_{0}+w_{\epsilon})\big),
			\end{equation*}
		on $U_{0,2d}\setminus \Sigma_{0}$ and on $U_{q,2d}\setminus \Psi_{q}^{-1}\Sigma_{q}$. On the other hand, we have $(\p_{t}\Delta)(\Phi_{0}+w_{\epsilon})+\Delta_{\tilde{g}}(\p_{t}\Phi)$=0 and henceforth	
			\begin{equation*}
				\Delta_{\tilde{g}}\big(\p_{t}\Phi-\nabla_{\tilde{v}}(\Phi_{0}+w_{\epsilon})\big)=0.
			\end{equation*}
		We know that $\Phi_{t}$ defines a smooth path in $\mathcal{D}^{\infty,\alpha}$, therefore $\p_{t}\Phi$ also lies in $\mathcal{D}^{\infty,\alpha}$. We also know that $\Phi_{0}+w_{\epsilon}$ admits an asymptotic expansion
			\begin{equation*}
				\Phi_{0}+w_{\epsilon}\sim\RRe{\underline{A}_{0}(s_{\epsilon})\zeta^{\frac{1}{2}}+(\underline{B}_{0}(s_{\epsilon})+B(\alpha))\zeta^{\frac{3}{2}}}-\frac{1}{2}\RRe{\bar{\mu}\zeta}\RRe{\underline{A}_{0}(s_{\epsilon})\zeta^{\frac{1}{2}}},
			\end{equation*}
		up to a term bounded by $O(|\zeta|^{\frac{5}{2}})$. And similarly on $U_{q,2d}$,
			\begin{equation*}
				\Phi_{0}+w_{\epsilon}\sim \RRe{\underline{A}_{q}(s_{\epsilon})\zeta^{\frac{1}{2}}+(\underline{B}_{q}(s_{\epsilon})+\epsilon^{2}B(\alpha_{q}))\zeta^{\frac{3}{2}}}-\frac{1}{2}\RRe{\bar{\mu}\zeta}\RRe{\underline{A}_{q}(s_{\epsilon})\zeta^{\frac{1}{2}}}.
			\end{equation*}

		Following the computation in Donaldson's paper, one finds that the condition $\nabla_{N}\tilde{v}=0$ implies
			\begin{equation*}
				\begin{split}
					&\nabla_{\tilde{v}}\big(A\zeta^{\frac{1}{2}}\big)=\frac{1}{2}(A\tilde{v})\zeta^{-\frac{1}{2}}+O(|\zeta|^{\frac{3}{2}})\\
					&\nabla_{\tilde{v}}\big(B\zeta^{\frac{3}{2}}\big)=\frac{3}{2}(B\tilde{v})\zeta^{\frac{1}{2}}+O(|\zeta|^{\frac{5}{2}})\\
					&\nabla_{\tilde{v}}(\RRe{\bar{\mu}}\zeta)=\IP{\tilde{v}}{\mu},
				\end{split}
			\end{equation*}
		on $U_{0,2d}\setminus\Sigma_{0}$ and $U_{0,2d}\setminus\Psi_{q}^{-1}\Sigma_{q}$, with respect to the metric $\rho_{\epsilon}^{-2}g_{\epsilon}$. Therefore, Proposition \ref{S_4_Prop_L2Sol} implies that $\p_{t}\Phi-\nabla_{\tilde{v}}(\Phi_{0}+w_{\epsilon})$ is the unique solution that asymptotes to
			\begin{equation*}
				\begin{split}
					&-\frac{\epsilon^{2-\tau}}{2}\underline{A}_{0}(s_{\epsilon})v_{0}\zeta^{-\frac{1}{2}}-\frac{1}{2}P_{0}(\underline{A}(s_{\epsilon})\cdot \epsilon^{2-\tau}\bs{v})\zeta^{\frac{1}{2}}, \text{ on }U_{0,2r},\\
					&-\frac{\epsilon^{2-\tau}}{2}\underline{A}_{q}(s_{\epsilon})v_{q}\zeta^{-\frac{1}{2}}-\frac{\epsilon^{2}}{2}P_{q}(\underline{A}(s_{\epsilon})\cdot \epsilon^{2-\tau}\bs{v})\zeta^{\frac{1}{2}}, \text{ on }U_{q,2r}.
				\end{split}
			\end{equation*}
		Here, $\underline{A}(s_{\epsilon})\cdot \bs{v}$ denotes the multiple $(\underline{A}_{0}(s_{\epsilon})v_{0},\cdots, \underline{A}_{q}(s_{\epsilon})v_{q}, \cdots)\in \Gamma(\Sigma_{\epsilon},N\Sigma_{\epsilon}^{\frac{1}{2}})$. Notice that multiplying $\p_{t}\Phi$ by $W^{-1}(\tilde{g})$ will not change the $\zeta^{\frac{1}{2}}$ coefficients,  we conclude
			\begin{equation*}
				\begin{split}
					&\epsilon^{\tau-2}\big(\underline{A}^{\epsilon}(\p_{t}s_{\epsilon})\big)_{0}=\mathrm{Re}\bigg(\frac{3}{2}(B(\alpha)+\underline{B}_{0}(s_{\epsilon}))v_{0}-\frac{1}{2}P_{0}(\underline{A}(s_{\epsilon})\cdot \bs{v})-\frac{1}{2}\IP{\mu}{v_{0}}\underline{A}_{0}(s_{\epsilon})\bigg),\\
					&\epsilon^{\tau-2}\big(\underline{A}^{\epsilon}(\p_{t}s_{\epsilon})\big)_{q}=\mathrm{Re}\bigg(\frac{3}{2}(B(\alpha_{q})+\epsilon^{-2}\underline{B}_{q}(s_{\epsilon}))v_{q}-\frac{1}{2}P_{q}(\underline{A}(s_{\epsilon})\cdot \bs{v})-\frac{1}{2\epsilon^{2}}\IP{\mu}{v_{q}}\underline{A}_{q}(s_{\epsilon})\bigg).
				\end{split}
			\end{equation*}
		We can now write the derivative of $\bs{F}^{\epsilon}$ as
			\begin{equation}\label{S_4_Eqn_SigmaD}
				D\bs{F}^{\epsilon}(\bs{v})=\frac{3}{2}\big(\bs{B}+\underline{B}^{\epsilon}(s_{\epsilon})\big)\cdot \bs{v}-\frac{1}{2}P(\underline{A}(s_{\epsilon})\cdot \bs{v})-\frac{\epsilon^{2-\tau}}{2}\bs{F}^{\epsilon}\cdot\IP{\mu}{\bs{v}}.
			\end{equation}
		Here, let $\bs{a}, \bs{b}$ be multiples, then $\bs{a}\cdot\bs{b}$ denotes the multiple $(a_{0}b_{0},a_{1}b_{1},\cdots)$, and $\bs{B}=(B(\alpha),\cdots, B(\alpha_{q}),\cdots )$ is the multiple defined by the asymptotic behavior of the approximate solution. Suppose now, $\tilde{g}=g_{\epsilon}+\epsilon^{2-\tau}\eta$, the linear map $D\bs{F}^{\epsilon}(\bs{v})$ is approximately $\frac{3}{2}(\bs{B}+\underline{B}^{\epsilon})\cdot\bs{v}$ with $\bs{F}^{\epsilon}$-quadratic errors. We define the approximate inverse to be
			\begin{equation}\label{S_4_Eqn_Approx}
				V(\bs{F}^{\epsilon},\bs{v})(\bs{a}):= \frac{2}{3}(\bs{B}+\underline{B}^{\epsilon})^{-1}\cdot \bs{a} 
			\end{equation}
			and the $\bs{F}^{\epsilon}$-quadratic errors to be
			\begin{equation*}
				\begin{split}
					&Q_{1}(\bs{F}^{\epsilon},\bs{v}):=-\big(\bs{B}+\underline{B}_{\epsilon}\big)^{-1}\cdot\frac{1}{3}P(\underline{A}(s_{\epsilon})\cdot \bs{v})-\frac{\epsilon^{2-\tau}}{3}\big(\bs{B}+\underline{B}_{\epsilon}\big)^{-1}\cdot\bs{F}^{\epsilon}\cdot\IP{\mu}{\bs{v}},\\
					&Q_{2}(\bs{F}^{\epsilon},\bs{a}):=-\frac{1}{3}P(\underline{A}(s_{\epsilon})\cdot \big(\bs{B}+\underline{B}_{\epsilon}\big)^{-1}\cdot\bs{a})-\frac{\epsilon^{2-\tau}}{3}\bs{F}^{\epsilon}\cdot\IP{\mu}{\big(\bs{B}+\underline{B}_{\epsilon}\big)^{-1}\cdot\bs{a}}.
				\end{split}
			\end{equation*}
		\begin{lemma}\label{S_4_Lemma_Tame_V}
			In the region $\|v\|_{3m}\leq \delta$, the following estimates hold
				\begin{equation*}
					\begin{split}
						&\|Q_{1}(v,\bs{F}^{\epsilon})(v_{1})\|_{2m}\leq C \|\bs{F}^{\epsilon}\|_{3m}\|v_{1}\|_{3m},\\
						&\|Q_{2}(v,\bs{F}^{\epsilon})(a_{1})\|_{2m}\leq C \|\bs{F}^{\epsilon}\|_{3m}\|a_{1}\|_{3m},\\
						&\|V(v,\bs{F}^{\epsilon})(a_{1})\|_{k}\leq C \|a_{1}\|_{k+m/2},\forall 0 \leq k\leq m,\\
						&\|V(v,\bs{F}^{\epsilon})(a_{1})\|_{k}\leq C_{k}\big( \|a_{1}\|_{k+m}+\|v\|_{k+m}\|a_{1}\|_{2m}\big),\forall  k\geq m.
					\end{split}
				\end{equation*}
		\end{lemma}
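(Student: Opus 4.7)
The plan is to exploit the explicit algebraic form of the approximate inverse \eqref{S_4_Eqn_Approx} together with the definitions of $Q_{1}$ and $Q_{2}$: every bound reduces to showing that $(\bs{B}+\underline{B}^{\epsilon})^{-1}$ is a tame multiplier and that each remaining factor is controlled by the tame estimates of Propositions \ref{S_4_Prop_AB_map} and \ref{S_4_Prop_L2Sol}, together with the standard tame product rule $\|fg\|_{k}\leq C_{k}(\|f\|_{k}\|g\|_{0}+\|f\|_{0}\|g\|_{k})$ and the interpolation of Corollary \ref{S_4_Cor_Interpolation}.

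First I would verify that $(\bs{B}+\underline{B}^{\epsilon})^{-1}$ is a well-defined tame multiplier. The section $\bs{B}$ is the fixed non-vanishing leading coefficient of the approximate solution, uniformly bounded below by non-degeneracy of $\alpha$ and each $\alpha_{q}$. Proposition \ref{S_4_Prop_AB_map} gives $\|\underline{B}^{\epsilon}(s_{\epsilon})\|_{k}\leq C_{k}(\epsilon^{(n-2+\tau)\sigma}+\|\eta\|_{k+K})$; combined with $\|v\|_{3m}\leq \delta$ and the tame estimate for $\eta=\mathcal{G}^{\epsilon}(v,g)$ from Lemma \ref{S_4_Lemma_Deformation}, this forces $|\underline{B}^{\epsilon}|\leq \tfrac{1}{2}|\bs{B}|$ pointwise for $\epsilon$ small. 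A Neumann-series expansion controlled term by term through the tame product rule then yields $\|(\bs{B}+\underline{B}^{\epsilon})^{-1}\|_{k}\leq C_{k}(1+\|v\|_{k+m})$.

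For the $V$ estimates, apply the tame product rule with $f=\tfrac{2}{3}(\bs{B}+\underline{B}^{\epsilon})^{-1}$ and $g=a_{1}$. For $0\leq k\leq m$ the smallness hypothesis makes $\|(\bs{B}+\underline{B}^{\epsilon})^{-1}\|_{k}$ uniformly bounded (since $k+m\leq 2m\leq 3m$), so $\|V(a_{1})\|_{k}\leq C\|a_{1}\|_{k}\leq C\|a_{1}\|_{k+m/2}$. For $k\geq m$ the tame product rule gives $\|V(a_{1})\|_{k}\leq C_{k}((1+\|v\|_{k+m})\|a_{1}\|_{0}+\|a_{1}\|_{k})$, which implies the stated bound.

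For $Q_{1}$ and $Q_{2}$, the crucial observation is that, by Definition \ref{S_3_Defn_Sheaf_Map} and the definition of $\bs{F}^{\epsilon}$, the components of $\underline{A}(s_{\epsilon})$ coincide with those of $\bs{F}^{\epsilon}$ up to positive powers of $\epsilon$; in particular $\|\underline{A}(s_{\epsilon})\|_{k}\leq C\|\bs{F}^{\epsilon}\|_{k}$ for every $k$. Applying Proposition \ref{S_4_Prop_L2Sol} with $k=2m$, then the tame product rule, and using $\|v\|_{3m},\|\eta\|_{3m}\leq \delta$ together with $m\geq 4$ (so $2m+4\leq 3m$) yields
\begin{equation*}
\|P(\underline{A}(s_{\epsilon})\cdot \bs{v}_{1})\|_{2m}\leq C\bigl(\|\underline{A}(s_{\epsilon})\cdot \bs{v}_{1}\|_{2m+4}+\|\eta\|_{3m}\|\underline{A}(s_{\epsilon})\cdot \bs{v}_{1}\|_{m}\bigr)\leq C\|\bs{F}^{\epsilon}\|_{3m}\|\bs{v}_{1}\|_{3m}.
\end{equation*}
The remaining term of $Q_{1}$ already carries an $\bs{F}^{\epsilon}$ factor and the small weight $\epsilon^{2-\tau}$, so the tame product rule bounds it by $C\|\bs{F}^{\epsilon}\|_{3m}\|\bs{v}_{1}\|_{3m}$ as well. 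Multiplying by the tame multiplier $(\bs{B}+\underline{B}^{\epsilon})^{-1}$, whose $C^{2m}$-norm is uniformly bounded, preserves this bound. The $Q_{2}$ estimate is identical with the roles of $(\bs{B}+\underline{B}^{\epsilon})^{-1}\cdot$ and $P$ interchanged. The main obstacle is the bookkeeping of derivative losses to keep every bound within the $3m$ budget, and the structural point that preserves the quadratic form is not merely the smallness of $\underline{A}(s_{\epsilon})$ but its \emph{proportionality} to $\bs{F}^{\epsilon}$, which turns the $\underline{A}(s_{\epsilon})$ factor inside $P$ into an $\bs{F}^{\epsilon}$ factor outside.
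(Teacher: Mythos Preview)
Your proposal is correct and follows essentially the same approach as the paper's proof: both rely on the bound $\|\underline{A}(s_{\epsilon})\cdot\bs{v}\|_{3m}\leq C\|\bs{F}^{\epsilon}\|_{3m}\|\bs{v}\|_{3m}$ (your ``proportionality'' observation) together with the tame estimate $\|\underline{B}^{\epsilon}(s_{\epsilon})\|_{k}\leq C_{k}(\epsilon^{(n-2+\tau)\sigma}+\|v\|_{k+m})$ from Proposition~\ref{S_4_Prop_AB_map}, then reduce everything to tame products and interpolation. Your write-up is more explicit about the mechanics (Neumann series for $(\bs{B}+\underline{B}^{\epsilon})^{-1}$, the tame product rule, tracking the $2m+4\leq 3m$ budget), whereas the paper's proof compresses this to ``straightforward computation'' and ``a direct consequence of the interpolation formula,'' but the substance is the same.
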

		\begin{proof}
			Tame estimates in Proposition \ref{S_4_Prop_AB_map} together with the interpolation formula show that
				\begin{equation*}
					\begin{split}
						&\|\underline{A}(s_{\epsilon})\cdot \bs{v}\|_{C^{3m}_{\tau,\epsilon}}\leq C\|\bs{F}^{\epsilon}\|_{C^{3m}_{\tau-2,\epsilon}}\|\bs{v}\|_{C^{3m}_{\tau-2,\epsilon}},\\
						&\|\underline{B}^{\epsilon}(s_{\epsilon})\|_{C^{k}_{0,\epsilon}}\leq C_{k}\big((\epsilon^{(n-2+\tau)\sigma}+\|v\|_{C^{k+m}_{\tau-2,\epsilon}}\big).\\
					\end{split}
				\end{equation*}
			When $m$ is large and $\epsilon$ is sufficiently small, straightforward computation yields the quadratic estimates for $Q_{1},Q_{2}$ and the linear estimates for $\|V(a)\|_{k}$ when $0\leq k\leq m$. Moreover, the tame estimates for $\|V(a)\|_{k},k\geq m$ are a direct consequence of the interpolation formula.	
		\end{proof}
	\subsection{Gluing}
		With the above setting, the Nash-Moser-Hamilton-Zehnder implicit function Theorem \ref{S_App_Thm_HM} proves the main Theorem \ref{S_1_Thm_main}. When $\epsilon$ is sufficiently small, there exists a section $v\in C^{\infty}_{\tau-2,\epsilon}(\Sigma_{\epsilon}, N\Sigma_{\epsilon})$ that solves the equation
			\begin{equation*}
				\bs{F}^{\epsilon}(v)=0,
			\end{equation*}
		and satisfies $\|v\|_{m}\leq C\epsilon^{(n-2+\tau)\sigma}$. This is equivalent to finding a family of diffeomorphisms $\Phi_{\epsilon}$, which is equal to the identity on $M_{2d}$ and is generated by a vector field $\tilde{v}$, whose restriction to $\Sigma_{\epsilon}$ is $\epsilon^{2-\tau}v\in \Gamma(\Sigma_{\epsilon},N\Sigma_{\epsilon})$. We find a solution $s_{\epsilon}(\tilde{g})$ to Poisson's Equation \ref{S_3_Eqn_Poisson} with respect to the pullback metric
			\begin{equation*}
				\tilde{g}=(\Phi_{\epsilon})^{*}g.
			\end{equation*}
		In what follows is that $\alpha_{\epsilon}=(\Phi_{\epsilon}^{-1})^{*}\big(\tilde{\alpha}_{\epsilon}+\ud (W(\tilde{g})s_{\epsilon})\big)$ define a $\Z_{2}$-harmonic $1$-form. The difference between the approximate solution and $\alpha_{\epsilon}$ is measured by $s_{\epsilon}$. Moreover, due to the uniqueness of the solution to
			\begin{equation*}
				\bs{F}^{\epsilon}(v)=0,
			\end{equation*}
			the diffeomorphism $\Phi_{\epsilon}$ and the section $s_{\epsilon}$ are independent of the large integer $m$. We can pick a sequence of $m_{i}\to \infty$, and the algorithm in Theorem \ref{S_App_Thm_HM} implies as $\epsilon\to 0$
			\begin{equation*}
				\|\Phi_{\epsilon}-Id\|_{C^{m_{i}}_{\tau-1,\epsilon}}, \|s_{\epsilon}\|_{\mathcal{C}^{m_{i}+2}_{\tau,\epsilon}}\leq C_{i}\epsilon^{(n-2+\tau)\sigma},
			\end{equation*}
			which proves the smooth convergence. Moreover, with respect to the conformal metric $\rho_{\epsilon}^{-2}g_{\epsilon}$, $\alpha_{\epsilon}$ is asymptotic to
			\begin{equation*}
				\begin{split}
					&\alpha_{\epsilon}=\ud \big((B(\alpha)+\underline{B}^{\epsilon}_{0}(s_{\epsilon}))\zeta^{\frac{3}{2}}+Err\big) \text{ on } \Phi_{\epsilon}^{-1}U_{0,2r},\\
					&\epsilon^{-2}\alpha_{\epsilon}=\ud \big((B(\alpha_{q})+\underline{B}^{\epsilon}_{q}(s_{\epsilon}))\zeta^{\frac{3}{2}}+Err\big) \text{ on } \Phi_{\epsilon}^{-1}U_{q,2r}.
				\end{split}
			\end{equation*}
		Here, by choosing $m$ large enough, we can show $\|\underline{B}^{\epsilon}\|_{C^{1}_{0,\epsilon}}\leq C\epsilon^{(n-2+\tau)\sigma}$ and therefore when $\epsilon$ is sufficiently small, $\alpha_{\epsilon}$ is non-degenerate.
		\begin{corollary}\label{S_4_Cor_Nonzero}
			When $M$ is a compact $3$-manifold and $\alpha$ is a non-degenerate $\Z_{2}$-harmonic $1$-form such that every ordinary zero is regular, then there exists a one parameter family of non-degenerate $\Z_{2}$-harmonic $1$-forms $\alpha_{\epsilon}$ that converges to $\alpha$ and only admits branching zeros. 
		\end{corollary}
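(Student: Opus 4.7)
The plan is to deduce Corollary \ref{S_4_Cor_Nonzero} from the main Theorem \ref{S_1_Thm_main} by verifying its hypotheses at every ordinary zero of $\alpha$ and then showing that the resulting family $\alpha_{\epsilon}$ has no ordinary zeros off its branching set for small $\epsilon$.

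\medskip

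First I would check the matching conditions. Since $M$ is a compact $3$-manifold and every ordinary zero of $\alpha$ is regular, the set $\mathcal{R}$ is finite. At each $p_{q}\in\mathcal{R}$ the Taylor quadric $P_{p_{q},2}$ is a non-degenerate trace-free quadric on $\R^{3}$, so after an orthogonal rotation it takes the form $\pm(b_{1}x_{1}^{2}+b_{2}x_{2}^{2}-(b_{1}+b_{2})x_{3}^{2})$ with $b_{1},b_{2}>0$; these are the only two signatures available in dimension three. Proposition \ref{S_3_Prop_Construction} with $n=3$ produces non-degenerate $\Z_{2}$-harmonic $1$-forms on $\R^{3}$ whose asymptotic polynomial is $a_{0}-a_{1}x_{1}^{2}-a_{2}x_{2}^{2}+(a_{1}+a_{2})x_{3}^{2}$, and the surjectivity of $(h_{1},h_{2})\mapsto(a_{1},a_{2})$ realizes any prescribed $(b_{1},b_{2})\in\R_{+}^{2}$, while the sign is handled by negating $\alpha_{q}$. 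Theorem \ref{S_1_Thm_main} then produces the family $\alpha_{\epsilon}$ with branching set $\Phi_{\epsilon}^{-1}(\Sigma_{\epsilon})=\Sigma\sqcup\bigsqcup_{q}\Phi_{\epsilon}^{-1}\Psi_{q}^{-1}(\Sigma_{q})$.

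\medskip

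Next I would argue that $\alpha_{\epsilon}\neq 0$ outside this branching set. On any compact subset of $M\setminus(\mathcal{R}\cup\Sigma)$, the smooth convergence $\alpha_{\epsilon}\to\alpha$ and $|\alpha|>0$ give the conclusion for small $\epsilon$. In a fixed tubular neighborhood of $\Sigma$, the asymptotic expansion $\alpha_{\epsilon}\sim d((B(\alpha)+\underline{B}^{\epsilon}_{0}(s_{\epsilon}))\zeta^{3/2})$ stated at the end of the paper, together with the bound $\|\underline{B}^{\epsilon}(s_{\epsilon})\|_{C^{0}_{0,\epsilon}}\lesssim\epsilon^{(1+\tau)\sigma}\to 0$ coming from Proposition \ref{S_4_Prop_AB_map} (cf.\ Remark \ref{S_4_Rmk_ABmap}) and the non-degeneracy $|B(\alpha)|\geq c_{0}>0$ of $\alpha$, yields $r^{-1/2}|\alpha_{\epsilon}|\geq c>0$ and rules out ordinary zeros there. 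Each new shrinking branching ellipse is treated analogously after rescaling by $\epsilon^{-2}\Psi_{q}^{*}$, using the non-degeneracy of $\alpha_{q}$.

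\medskip

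The main obstacle is the intermediate gluing annulus $\{\tfrac{1}{2}\epsilon^{1-\sigma}<\rho_{q}<4\epsilon^{1-\sigma}\}$, where neither the bulk nor the blow-up model alone controls $\alpha_{\epsilon}$. Here the explicit construction \eqref{S_3_Eqn_Model_form} gives $\tilde\alpha_{\epsilon}\approx dP_{p_{q},2}$, so $|\tilde\alpha_{\epsilon}|\geq c\rho_{q}\sim c\epsilon^{1-\sigma}$ by the non-degeneracy of $P_{p_{q},2}$. The correction $d(W(\tilde g)s_{\epsilon})$ is controlled using the weighted Nash-Moser output $\|s_{\epsilon}\|_{\mathcal{C}^{k+2}_{\tau,\epsilon}}\lesssim\epsilon^{2-\tau+(1+\tau)\sigma}$ extracted from Proposition \ref{S_4_Prop_AB_map}: since one derivative costs a factor of $\rho_{\epsilon}^{-1}$ in the weighted norm, at $\rho_{\epsilon}\sim\epsilon^{1-\sigma}$ we obtain
\[
|ds_{\epsilon}|\lesssim\epsilon^{2-\tau+(1+\tau)\sigma}\cdot\rho_{\epsilon}^{\tau-1}\sim\epsilon^{1+2\sigma}.
\]
Choosing $\tau$ slightly negative and $\sigma>0$ sufficiently small makes $\epsilon^{1+2\sigma}\ll\epsilon^{1-\sigma}$, so the correction is strictly dominated by $|\tilde\alpha_{\epsilon}|$; the further diffeomorphism correction from $\Phi_{\epsilon}$, bounded in $C^{1}_{\tau-1,\epsilon}$ by the theorem, does not disturb this comparison. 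Combining the three regional estimates yields $\alpha_{\epsilon}\neq 0$ off its branching set, which is the desired conclusion.
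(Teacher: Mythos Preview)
Your approach is the same as the paper's: invoke Proposition~\ref{S_3_Prop_Construction} to supply the local models at every regular zero and then apply Theorem~\ref{S_1_Thm_main}. The paper's own proof is a one-liner --- it simply notes that each model $\alpha_{p}$ has no ordinary zeros and concludes ``therefore, when $\epsilon$ is sufficiently small, $\alpha_{\epsilon}$ has no ordinary zero'' --- whereas you spell out the regional argument, including the gluing annulus $\{\epsilon^{1-\sigma}\lesssim\rho_q\lesssim\epsilon^{1-\sigma}\}$ that is covered neither by the bulk $C^{\infty}_{loc}$ convergence nor by the blow-up convergence in Theorem~\ref{S_1_Thm_main}; your weighted estimate there is a genuine addition that the paper leaves to the reader.

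One small sharpening: when you treat the rescaled region near each shrinking ellipse, the phrase ``using the non-degeneracy of $\alpha_{q}$'' is not quite what you need. Non-degeneracy in the sense of Definition~\ref{S_1_Defn_Z2} only controls $r^{-1/2}|\alpha_q|$ near $\Sigma_q$; to exclude ordinary zeros on all of $\R^{n}\setminus\Sigma_q$ you should cite the explicit statement in Proposition~\ref{S_3_Prop_Construction} that $\ud f_{\bs h}\neq 0$ outside $E_{\bs h}$, which is exactly the fact the paper's proof is leaning on.
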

		\begin{proof}
			At each $p\in \mathcal{R}$, a regular zero, we glue in the non-degenerate $\Z_{2}$-harmonic $1$-forms $\alpha_{p}$ constructed in Proposition \ref{S_3_Prop_Construction}. In particular, each $\alpha_{p}$ only admits branching zeros. Therefore, when $\epsilon$ is sufficiently small, $\alpha_{\epsilon}$ has no ordinary zero.
		\end{proof}

\section{Application and discussion}\label{S_5}

	\subsection{Non-degenerate $\Z_{2}$-harmonic $1$-form on compact manifolds with $b^{1}>0$}\label{S_5_1}
		In this subsection, we will prove an existing theorem for non-degenerate $\Z_{2}$-harmonic $1$-forms using our gluing result.
			\begin{prop}\label{S_5_Prop_Existence}
				Let $M^{n}, n\geq 3$ be a compact oriented manifold, with $b^{1}>0$. There exists a Riemannian metric $g$ such that a sequence of non-degenerate $\Z_{2}$-harmonic $1$-forms $\alpha_{\epsilon}$ with respect to $g$ converges to a harmonic $1$-form that contains regular zeros of index $1$ or $n-1$.
			\end{prop}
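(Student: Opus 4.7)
The plan is to produce a harmonic $1$-form $\alpha$ on $M$ possessing at least one regular zero of Morse index $1$ (equivalently $n-1$ after replacing $\alpha$ by $-\alpha$), and then feed this into the main gluing theorem (Theorem \ref{S_1_Thm_main}) using the Lawlor-type models supplied by Proposition \ref{S_3_Prop_Construction}. Since $b^{1}(M)>0$, fix a nontrivial class $c\in H^{1}(M,\R)$ with closed representative $\omega_{0}$. Choose a small coordinate ball around a point $p$ and a bump function $\chi$ equal to $1$ near $p$, and modify $\omega_{0}$ to $\omega:=\omega_{0}+d(\chi f)$, where $f$ is a local function arranged so that $d(\chi f)(p)=-\omega_{0}(p)$ and the Hessian of the resulting local potential at $p$ has signature $(n-1,1)$. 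This introduces a non-degenerate critical point of Morse index $1$ at $p$ while keeping $\omega$ closed and in the class $c$; a generic further perturbation by an exact form supported away from $p$ renders all remaining critical points non-degenerate. By Calabi's intrinsic characterization of harmonic $1$-forms \cite{Calabi69}, applied to this closed Morse $1$-form whose transitivity hypothesis is generic, there exists a Riemannian metric $g$ such that $\omega$ is $g$-harmonic; set $\alpha:=\omega$.

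Next, verify the matching condition of Theorem \ref{S_1_Thm_main} at $p$. The Taylor expansion of a local potential for $\alpha$ at $p$ begins with a harmonic quadric $P_{p,2}$ of signature $(n-1,1)$, so by Proposition \ref{S_3_Prop_Construction} and the surjectivity of the map $(h_{1},\ldots,h_{n-1})\mapsto(a_{1},\ldots,a_{n-1})$ onto $(\R_{+})^{n-1}$, there exists a non-degenerate $\Z_{2}$-harmonic $1$-form $\alpha_{p}$ on $\R^{n}$ with compact ellipsoidal branching set whose asymptotic potential matches $P_{p,2}$ up to $O'(|\bs{y}|^{2-n})$. Invoking Theorem \ref{S_1_Thm_main} with $\mathcal{R}=\{p\}$ and model data $\alpha_{p}$ produces the required family $\alpha_{\epsilon}$ of non-degenerate $\Z_{2}$-harmonic $1$-forms with respect to the fixed metric $g$, converging to $\alpha$ in $C^{\infty}_{loc}(M\setminus\{p\})$ while the branching set develops a shrinking Lawlor-type ellipsoid of scale $\epsilon$ around $p$.

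The main obstacle is the first step, of producing a harmonic $1$-form with a prescribed Morse index $1$ zero. For $n=3$ this is immediate from Honda's theorem \cite{Honda04}: any non-degenerate zero of a harmonic $1$-form automatically has index $1$ or $2=n-1$, and for a generic metric the harmonic representative has only non-degenerate zeros, provided $M$ is not a surface bundle over $S^{1}$ (which guarantees that the harmonic form must vanish somewhere). For $n\geq 4$ the bump-function construction combined with Calabi's theorem is required; the subtle point is verifying Calabi's transitivity hypothesis for the modified $\omega$, which can be ensured by choosing $c$ to be a generic primitive integral class, or concretely by realizing $c$ as the Poincar\'e dual of a non-separating hypersurface and lifting a Morse function from one fundamental domain in the infinite cyclic cover, producing a transitive Morse $1$-form on $M$ whose Morse inequalities (or direct handle-decomposition analysis) force an index $1$ critical point.
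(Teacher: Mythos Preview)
Your strategy---construct a closed Morse $1$-form with an index-$1$ zero, invoke Calabi's theorem to make it harmonic for some metric, then feed the result into Theorem \ref{S_1_Thm_main} with the Lawlor models of Proposition \ref{S_3_Prop_Construction}---is exactly the shape of the paper's argument. The difference lies in how the first step is executed, and this is where your proposal has a genuine gap.

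Your bump-function construction $\omega=\omega_{0}+d(\chi f)$ does not control the zeros it creates. Since $\omega=\omega_{0}$ on the boundary of the ball and $\omega_{0}$ is nonvanishing there, the Poincar\'e--Hopf indices of the zeros you introduce inside must sum to zero; you cannot create a single index-$1$ zero in isolation, and the companion zeros your construction necessarily produces are left uncontrolled. Saying that a generic exact perturbation renders them nondegenerate does not rule out index-$0$ or index-$n$ zeros, which would immediately obstruct Calabi's transitivity hypothesis. More seriously, you invoke Calabi's theorem as a black box while conceding that transitivity is ``the subtle point''; your proposed fixes are either vague (``generic primitive integral class'') or do not actually establish what is needed (the Morse inequalities for a circle-valued Morse function do \emph{not} force an index-$1$ critical point in general---think of products with tori). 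For $n=3$ your appeal to Honda's result is fine and matches Remark \ref{S_5_RMK_Existence}, but for $n\geq 4$ the argument as written is incomplete.

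The paper handles this by proving a local birth lemma (Proposition \ref{S_5_Lemma_Calabi}): starting from a harmonic $1$-form that is locally $dx$ on a ball, one writes down an \emph{explicit} function (built from $x(r^{2}-1)+(y^{2}-z^{2})\gamma(r)$ in three variables, then rotated up to $\R^{n}$) whose differential has exactly a cancelling pair of nondegenerate zeros of indices $r$ and $r{+}1$, agrees with $dx$ near the boundary, and for which one can exhibit by hand the increasing curves through every non-critical point. These curves are then used, in Calabi's spirit, to build a closed transverse $(n{-}1)$-form and hence a metric, \emph{all supported in the original ball}. This sidesteps any global transitivity verification: outside the ball nothing changes, and inside the ball transitivity is checked directly from the explicit model. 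Taking $r=1$ gives the index-$1$ zero needed for Proposition \ref{S_3_Prop_Construction} and Theorem \ref{S_1_Thm_main}. Your approach can be completed along these lines, but it requires replacing the unspecified bump construction with exactly this kind of controlled local birth.
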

			\begin{remark}\label{S_5_RMK_Existence}
				The strategy is to construct a harmonic $1$-form with regular zeros of index $1$ and $n-1$ on $M^n$. In particular, when $n=3$ and $M^3$ are not a surface bundle over $S^{1}$, for a generic choice of metric $g$ and class $[\beta]\in H^{1}(M)$, there exists a harmonic representative $\beta\in [\beta]$ with respect to the metric $g$, such that $\beta$ admits regular zeros. Then we can apply the gluing result to obtain a sequence of non-degenerate $\Z_{2}$-harmonic $1$-forms.
			\end{remark}
		
		To prove Proposition \ref{S_5_Prop_Existence}, we will use the following proposition due to Calabi \cite{Calabi69}. The idea is to adapt Milnor's birth lemma of nondegenerate critical points to the context of Calabi's intrinsic characterization of harmonic $1$-forms on Riemannian manifolds.
			
			\begin{prop}\label{S_5_Lemma_Calabi}[c.f. Section IV, \cite{Calabi69}]
				Let $(M^{n},g)$ be a Riemannian manifold, and let $U$ be an open set in $M^{n}$ on which $\ud x$ is a harmonic $1$-form on $U$. There exists a small ball $U'\subset U$ such that, in $U'$, we can perturb the $1$-form into $\ud h$, such that $\ud h$ is harmonic with respect to a new Riemannian metric $g'$ and has only two regular zeros of index $k, k+1$ for $ 0<k<n-1$.
			\end{prop}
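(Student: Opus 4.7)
The plan is to decouple the problem into two pieces: first produce a function $h$ on $M$ that agrees with $x$ off a small ball and has exactly two nondegenerate critical points of indices $r$ and $r+1$ inside $U'$; then produce a metric $g'$ that agrees with $g$ off $U'$ and makes $dh$ harmonic. The two steps interact only through what happens in a thin collar of $\partial U'$, and this is where the work lies.

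For the first step, I pick Fermi-type coordinates $(x, y_1, \ldots, y_{n-1})$ on a small ball $U' = \{x^2 + |y|^2 < \delta^2\} \subset U$ adapted to the level sets of the original coordinate function $x$. On a yet smaller ball I deform $x$ to
\begin{equation*}
    h(x, y) = x + \eta\,\chi(x, y)\,\psi(x, y),
\end{equation*}
where $\chi$ is a bump function supported in $\{x^2 + |y|^2 < (\delta/2)^2\}$ equal to $1$ on a smaller ball, $\psi$ is a polynomial model of the Milnor/Cerf cancelling pair of Morse type $(r, r+1)$, and $\eta \ll \delta$ is a scale parameter. A standard transversality argument shows that for generic small $\eta$, the critical points of $h$ inside $U'$ are exactly two, of indices $r$ and $r+1$, both nondegenerate, and connected by a single gradient trajectory.

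For the second step, I invoke Calabi's intrinsic characterization of harmonic $1$-forms. The closed form $dh$ is intrinsically harmonic near each of its critical points of the birth pair because the cancelling arrangement ensures that the stable/unstable manifolds have the correct transverse structure. Concretely, in Morse coordinates $(z_1, \ldots, z_n)$ near a critical point of index $r_0 \in \{r, r+1\}$, write $h = c - \tfrac{1}{2}\sum_{i \leq r_0} z_i^2 + \tfrac{1}{2}\sum_{i > r_0} z_i^2$ and seek $g'$ in the conformal class $e^{2\phi} g_{\text{flat}}$, so that
\begin{equation*}
    \Delta_{g'} h = e^{-2\phi}\big(\Delta_{\text{flat}} h + (n-2)\langle d\phi, dh\rangle_{\text{flat}}\big).
\end{equation*}
Setting this to zero reduces to a first-order linear transport equation for $\phi$ along the gradient lines of $h$, with source term $n - 2r_0 \neq 0$; the equation is solvable off the critical locus, and Calabi's analysis shows that the solutions on the two cells glue smoothly across the unique connecting trajectory precisely because $r_0$ and $r_0 + 1$ differ by one and the connecting trajectory carries a compatible one-dimensional datum. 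Away from a neighborhood of the birth pair, $dh = dx$, which is already harmonic with respect to $g$, so I can take $\phi \equiv 0$ there.

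The main obstacle is the patching in the intermediate annular region: the locally constructed conformal factor $\phi$ (which is nonzero due to $n - 2r_0 \neq 0$) must be smoothly extended to equal $0$ near $\partial U'$ while preserving the equation $\Delta_{g'} h = 0$. Here I would use the freedom to modify $g'$ in a non-conformal way in the annular region, not just by a conformal factor: because $dh = dx$ in the annulus and harmonicity of a fixed $1$-form is an underdetermined condition on the metric, one can always produce a compactly supported metric correction. Quantitatively, if $\eta$ is small enough, the deviation $\Delta_g h$ in the annulus is of order $\eta$ and supported where $d\chi \neq 0$, and a standard Dirichlet-to-Neumann argument (solving $\Delta_g u = -\Delta_g h$ with appropriate boundary behavior, then absorbing the result into a metric correction) closes the construction. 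This is exactly Calabi's localization technique from Section IV of \cite{Calabi69}, and our only adaptation is to keep the perturbation supported in $U'$.
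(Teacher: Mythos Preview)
Your first step (producing the function $h$ with a cancelling pair) is fine and close in spirit to the paper's explicit model.  The genuine gap is in the second step.  With your Morse model $h=c-\tfrac12\sum_{i\le r_0}z_i^2+\tfrac12\sum_{i>r_0}z_i^2$ and \emph{any} conformal metric $g'=e^{2\phi}g_{\text{flat}}$, the value of $\Delta_{g'}h$ at the critical point is $e^{-2\phi(0)}\Delta_{\text{flat}}h(0)=e^{-2\phi(0)}(n-2r_0)$, because $dh=0$ there kills the $\langle d\phi,dh\rangle$ term.  So unless $r_0=n/2$ (which is impossible for both $r_0=r$ and $r_0=r+1$ simultaneously), no conformal factor can make $dh$ coclosed at the critical point; your transport equation for $\phi$ has no smooth solution across the zero.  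The remedy is not a patching issue in the annulus---the conformal ansatz is already broken at the critical points themselves.

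The paper avoids this by working differently in both zones.  Near each critical point it chooses \emph{weighted} Morse coordinates $h=\pm\sum a_j^\pm w_j^2$ with $\sum_{j\le r_0}a_j=\sum_{j>r_0}a_j$ (possible precisely because $0<r_0<n$, so both sides are nonempty), so that $h$ is flat-harmonic there and no metric change is needed.  In the intermediate region it does not solve a PDE for the metric at all: it constructs directly a closed $(n-1)$-form $\eta$ with $\beta\wedge\eta>0$ (away from the zeros), built by summing pullbacks of transverse volume forms along curves on which $h$ is monotone---this is Calabi's actual transitivity/intrinsic-harmonicity mechanism, not a Dirichlet--to--Neumann argument.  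Once such an $\eta$ exists, linear algebra pointwise produces a metric $g'$ with $\star_{g'}\beta=\eta$.  Your sketch invokes ``Calabi's localization technique'' by name but substitutes a different (and unworkable) conformal mechanism; the missing idea is the direct construction of the closed $(n-1)$-form via monotone curves.
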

			\begin{proof}
				We first construct a model solution in Euclidean space. Let $M>0$ be a sufficiently large constant, we construct a function $f$ on $\R^{3}=(x,y,z)$ with the following properties
					\begin{enumerate}
						\item for $r^2=x^{2}+y^{2}+z^{2}>M^{2}$, $f(x,y,z)=x$ identically;
						\item $f$ is an even function with respect to $y$ and $z$,
						\item $f$ has exactly $2$ nondegenerate critical points $(x,y,z)=(\pm c,0,0), c\ll M$ of index respectively $1$ and $2$.
					\end{enumerate}
				An example of such a function can be constructed as follows. Let $\gamma(t)\in C^{\infty}([0,\infty))$ be a non-increasing function, such that $\gamma(t)=1$ for $t\leq 1$, $\gamma(t)=0$ for $t\geq M$ and $|\gamma'(t)|\leq \frac{2}{M}$. We define
					\begin{equation*}
						f(x,y,z)=\frac{x(r^{2}-1)+(y^{2}-z^{2})\gamma(r)}{\gamma(r)+(r^{2}-1)(1-\gamma(r))}.
					\end{equation*}
				The resulting function satisfies the above properties with $c=3^{-1/2}$. Moreover, it is easy to see that $f^{-1}(\{a\})\cap \{r\geq 1\}$ is connected; therefore, for every $p\in \{r>1+\epsilon\}$, there exist smooth curves $\ell_{1},\ell_{2}$ such that
					\begin{equation*}
						\begin{split}
							&\ell_{i}: [0,1]\to B(0,2M), \frac{\ud}{\ud t}f(l_{i}(t))>0,\\
							&\ell_{1}(0)\in S(0,2M), l_{1}(1)=p,\\
							&\ell_{2}(0)=p, \ell_{2}(1)\in S(0,2M).
						\end{split}
					\end{equation*}
				Moreover, by joining $\ell_{1},\ell_{2}$ and smoothing the corner at $p$, we obtain a curve $\ell$, along which the function $f$ is increasing and passing through $p$.\\
				
				On the other hand, in the region $\{r\leq 1\}$, the function
					\begin{equation*}
						f=x^{3}-x+(x+1)y^{2}+(x-1)z^{2}.
					\end{equation*}
				Detail computation shows that for every $p\neq (\pm 3^{-1/2},0,0)$, we find a smooth curve that passes through $p$, with endpoints lying within $\{r>1+\epsilon\}$ and $f$, which is increasing along this curve.\\
			
				Let $(x_{1}',\cdots, x_{n}')$ be the coordinate in $\R^{n}$ and let
					\begin{equation*}
						\bs{y'}=(x_{2}',\cdots, x_{n-r}'), \bs{z'}=(x_{n-r+1}',\cdots,x_{n}'),
					\end{equation*}
				then $\tilde{f}(x_{1}',\bs{y}',\bs{z}'):=f(x_{1}',|\bs{y'}|,|\bs{z'}|)$ defines a function with exactly $2$ critical points
					\begin{equation*}
						p_{\pm}=(\pm 3^{-1/2},\bs{0},\bs{0})
					\end{equation*}
					of index respectively $r$ and $r+1$. We choose Morse coordinates on small neighborhoods $B(p_{\pm},3\delta)$ around $p_{\pm}$ such that
						\begin{equation*}
							\begin{split}
								&\tilde{f}= \tilde{f}(p_{-})-\sum_{j=1}^{r+1} a^{-}_{j} w_{j}^{2}+\sum_{l=r+2}^{n}a^{-}_{l}w_{l}^{2}, \text{ near } p_{-},\\
								&\tilde{f}=\tilde{f}(p_{+})-\sum_{j=1}^{r}a_{j}^{+}w_{j}^{2}+\sum_{l=r+1}^{n}a^{+}_{l}w_{l}^{2}, \text{ near } p_{+},
							\end{split}
						\end{equation*}
					where $a^{\pm}_{j}, a^{\pm}_{l}>0$ and
						\begin{equation*}
							\sum_{j=1}^{r+1}a^{-}_{j}=\sum_{l=r+2}^{n}a^{-}_{l}, \quad\sum_{j=1}^{r} a^{+}_{j}=\sum_{l=r+1}^{n}a_{l}^{+}.
						\end{equation*}
					Then $\tilde{f}$ is harmonic with respect to the standard Euclidean metric $\sum \ud w_{i}^{2}$ on $V_{\pm}$. We can find $n-2$-forms $\xi_{\pm}$ on $B(p_{\pm},3\delta)$ such that
						\begin{equation*}
							\ud \xi_{\pm}=\star \ud \tilde{f}, |\xi_{\pm}|=O(|w|^{2}).
						\end{equation*}
					Let $\gamma_{1}$ be a cutoff function such that $\gamma_{1}(t)=1$ when $|t|\leq 2$ and $\gamma_{1}(t)=0$ when $|t|>3$.
						Let $r_{w}^{2}=w_{1}^{2}+\cdots+w_{n}^{2}$, on the region $r_{w}<\delta$ we construct
						\begin{equation*}
							\eta_{\pm}'=\gamma_{1}\big(r_{w}/\delta\big) \xi_{\pm}
						\end{equation*}
					
					Let $p\in M^{n}$ and $U$ be sufficiently small neighborhoods of $p$, on which the metric is close to the Euclidean metric. Without loss of generality, we view $U$ as a small ball of radius $3\delta'$ in $\R^{n}=(x_{1},\cdots,x_{n})$ and let $\alpha=\ud x_{1}$. Then let $L:\R^{n}\to \R^{n}, (x_{1},\cdots,x_{n})\mapsto (Mx_{1}/\delta',\cdots,Mx_{n}/\delta')$. We perturb $\alpha$ in a ball of radius $\delta'$ into
						\begin{equation*}
							\beta=\ud \bigg(\delta' M^{-1}\tilde{f}\circ L\bigg),
						\end{equation*}
						and define $\eta_{\pm}=(\delta'/M)^{n-2}L^{*}(\eta_{\pm}')$. We also choose a form $\xi_{0}$ in the small ball such that
							\begin{equation*}
								\ud \xi_{0}=\star_{g}\ud x_{1},
							\end{equation*}
							and $|\xi_{0}|=O(|x|)$. We define a closed $n-1$ form
							\begin{equation*}
								\tilde{\zeta}=
                                \begin{cases}
                                    \ud \bigg(\gamma_{1}(|x|/\delta')\xi_{0}+\eta_{+}+\eta_{-}\bigg) & |x|\leq 2\delta'\\
                                    \star_{g} \alpha & |x|>2\delta'
                                \end{cases}
							\end{equation*}
						By construction, $\beta\wedge \tilde{\zeta}$ on $\{|x|>2\delta'\}$ and in a small punctured ball around each non-degenerate critical point of $\beta$.\\
						
						On the other hand, for any point
							\begin{equation*}
								p\in B(0,2\delta')\setminus\big(L^{-1}(B(p_{\pm},2\delta))\big),
							\end{equation*}
							one finds a smooth curve $\tilde{\ell}_{p}$, such that
							\begin{equation*}
								p\in \tilde{\ell}_{p}, \quad\p\tilde{\ell}_{p} \subset \p B(0,3\delta'),
							\end{equation*}
						along which $\IP{\tilde{\ell}_{p}'}{\beta}>0$. We can require that $\tilde{\ell}_{p}$ is the integral curve for the vector field $\frac{\p}{\p x_{1}}$ in $B(0,3\delta')\setminus B(0,(2+\epsilon)\delta')$. Moreover, since $\alpha$ is a harmonic $1$-form with respect to the metric $g$, transitivity implies that every two points on $\p B(0,3\delta')$ we can find a smooth curve $\gamma$ on $M\setminus B(0,3\delta')$, such that $\IP{\gamma'}{\alpha}>0$. Therefore the curve $\tilde{\ell}_{p}$ can be extended smoothly to $\ell_{p}\simeq S^{1}$, with $\IP{\ell_{p}'}{\beta}>0$.

                        For each curve $\ell_{p}$, we define $U_{p}$ to be a tubular neighborhood that does not intersect $L^{-1}(B(p_{\pm},\delta))$ and a map
							\begin{equation*}
								\pi_{p}:U_{p}\simeq \ell_{p}\times B^{n-1}(0,1)\to B^{n-1}(0,1).
							\end{equation*}
						Let $\xi_{p}$ be a non-negative $n-1$-form on $B^{n-1}(0,1)$, such that $\xi_{p}$ is equal to the Euclidean volume form on $B^{n-1}(0,\frac{1}{3})$ and vanishes outside $B^{n-1}(0,\frac{2}{3})$. It is easy to see that $\beta\wedge\pi_{p}^{*}\xi_{p}>0$ is in a slightly smaller tubular neighborhood. Suppose $\{U_{p}\}$ is a finite open cover of $\overline{B(0,2\delta')}\setminus L^{-1}B(p_{\pm},2\delta)$, we pick a sufficiently large constant $N$ such that
							\begin{equation*}
								\beta\wedge \bigg(\tilde{\zeta}+N\sum_{p}\pi_{p}^{*}\xi_{p}\bigg)>0,
							\end{equation*}	
						away from the non-degenerate critical points. By linear algebra, we can find a Riemannian metric $g'$ on $M$ such that $g'$ is equal to the Euclidean metric in Morse coordinates near two non-degenerate critical points and $\tilde{\zeta}+N\sum_{p}\pi_{p}^{*}\xi_{p}=\star_{g'}\beta$. To conclude, $\beta$ is a harmonic $1$-form with respect to $g'$ and agrees with $\alpha$ outside of $B(0,2\delta')$ and we obtain the desired harmonic $1$-form and Riemannian metric.
			\end{proof}

	\subsection{Resolving regular zeros of Joyce-Karigiannis construction}\label{S_5_2}
		In this subsection, we will discuss examples of torsion-free $G_{2}$-manifolds that can be used to generalize the Joyce-Karigiannis construction of torsion-free $G_{2}$-manifold \cite{Joyce21}. We begin with some basic definitions.\\
		
		Let $M^{7}$ be an oriented $7$-dimensional manifold. A $3$-form $\varphi\in \Omega^{3}$ is called a $G_{2}$-structure if it defines a metric
				\begin{equation*}
					g_{\varphi}(v,w):=\frac{(u\llcorner \varphi)\wedge (v\llcorner \varphi)\wedge \varphi}{V},
				\end{equation*}
		where $V\in \Omega^{7}$ is a nowhere vanishing section, normalized so that $|\varphi|_{g_{\varphi}}^{2}=7$. A $G_{2}$-structure $\varphi$ is called torsion-free if
				\begin{equation*}
					\ud \varphi=\ud \star_{g_{\varphi}}\varphi=0.
				\end{equation*}
		This condition implies $\varphi$ is parallel with respect to $g_{\varphi}$. Therefore, the holonomy group $\mathrm{Hol}_{g}(p)\subset G_{2}$ is the stabilizer of $\varphi\big|_{p}\in \Lambda^{3}T_{p}^{*}M$.\\

		In the following, we will take $\varphi$ to be a torsion-free $G_{2}$ structure. A $3$-dimensional submanifold $X^{3}\subset M^{7}$ is called associative if it is calibrated by $\varphi$. In other words, $\varphi\big|_{X^{3}}=\mathrm{Vol}_{g_{\varphi}}\big|_{X^{3}}$.
		
		\begin{example}\label{S_5_Ex_G2}
			Let $(Y^{6},\omega, \Omega)$ be a Calabi-Yau $3$-fold. Then
				\begin{equation*}
					\varphi=\omega\wedge \ud \theta+\RRe{\Omega}
				\end{equation*}
			defines a torsion-free $G_{2}$-structure on $Y^{6}\times S^{1}$, where $\theta$ is the coordinate in $S^{1}$. If $X^{3}$ is a special Lagrangian in $Y^{6}$, calibrated by $\RRe{\Omega}$, then $X^{3}\times \{\theta_{0}\}$ is an associative submanifold in $Y^{6}\times S^{1}$.
		\end{example}

		Suppose $\iota : M^{7}\to M^{7}$ is an involution that preserves $\varphi$, and the fixed point set $X^{3}$ is an associative submanifold. Suppose further that there is a nowhere vanishing harmonic $1$-form $\alpha$ on $X^{3}$ with respect to $g_{X^{3}}$. Then, we have the following gluing theorem.
			\begin{theorem}\label{S_6_Thm_JK}[Theorem 1.1 \cite{Joyce21}]
				Let $(M^{7},\varphi,\tau,X^{3})$ be described as above. Then, there exists a compact $7$-manifold $\widetilde{M}^{7}$ that defines a resolution of singularities $P: \widetilde{M}^{7}\to M^{7}/\langle \iota \rangle$ along its singular locus $X^{3}\subset M^{7}/\langle \iota \rangle$ by gluing a bundle $E\to X^{3}$ along $X^{3}$ with fiber the Eguchi-Hanson space. The preimage of $Q:=\pi^{-1}(X^{3})$ is a $5$-dimensional submanifold of $\widetilde{M}$, and $P_{Q}: Q\to X^{3}$ is a fiber bundle with the smooth fiber $S^{2}$.
				
				There exists a one parameter family of torsion-free $G_{2}$-structures $\widetilde{\varphi}_{t}, t\in (0,\epsilon]$ on $\widetilde{M}$, for $\epsilon$ sufficiently small, such that $(\widetilde{\varphi}_{t},g_{\widetilde{\varphi}_{t}})$ converges to $P^{*}(\varphi,g_{\varphi})$ in $C^{0}$ away from $Q$ as $t\to 0$. For each $x\in X$, the preimage $P_{Q}^{-1}(x)\simeq S^{2}$, with metric $g_{\varphi}\big|_{P_{Q}^{-1}(x)}$, approximates a round $2$-sphere with area $\pi t^{2}|\alpha|_{x}$.
			\end{theorem}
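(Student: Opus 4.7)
The plan is to follow the standard gluing paradigm for constructing special-holonomy metrics: first build a one-parameter family of approximately torsion-free $G_{2}$-structures $\widetilde{\varphi}_{t}^{\mathrm{app}}$ on a topological resolution $\widetilde{M}$ of $M^{7}/\langle \iota \rangle$, then perturb each to a genuine torsion-free structure by inverting the linearized operator in weighted H\"older spaces. The topological resolution is constructed fiberwise: near the fixed associative $X^{3}$, the quotient $M^{7}/\langle \iota \rangle$ has transverse model $\CC^{2}/\Z_{2}$, and I replace each normal slice by the Eguchi-Hanson space $T^{*}S^{2}$, producing a fiber bundle $E \to X^{3}$ whose zero section is the $5$-dimensional locus $Q$. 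Away from $Q$, the resolution $P: \widetilde{M} \to M^{7}/\langle \iota \rangle$ is a diffeomorphism, so the nontrivial part of the construction is concentrated in a tubular neighborhood of $Q$.

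For the approximate structure, observe that Eguchi-Hanson is hyper-K\"ahler with a triple $(\omega_{1}, \omega_{2}, \omega_{3})$ asymptotic at ALE rate to the flat triple on $\CC^{2}/\Z_{2}$. Near $Q$ one builds a local $G_{2}$-form of the shape
\begin{equation*}
\widetilde{\varphi}_{t}^{\mathrm{app}} = \mathrm{Vol}_{X^{3}} + \sum_{i=1}^{3} e^{i}\wedge \omega_{i}^{(t)},
\end{equation*}
where $\{e^{i}\}$ is an orthonormal coframe on $X^{3}$ induced by $g_{\varphi}$ and $\omega_{i}^{(t)}$ are the Eguchi-Hanson K\"ahler forms rescaled by $t^{2}$. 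The nowhere-vanishing harmonic $1$-form $\alpha$ enters in two essential ways: its value at each $x\in X^{3}$ selects a preferred direction in the $S^{2}$ of hyper-K\"ahler rotations (equivalently, an $\mathrm{SO}(3)$ reduction of the normal frame bundle), and its pointwise magnitude $|\alpha|_{x}$ determines the scale of the exceptional $S^{2}\subset T^{*}S^{2}$, yielding area $\pi t^{2}|\alpha|_{x}$ for $P_{Q}^{-1}(x)$. Nowhere-vanishing of $\alpha$ is what keeps $\widetilde{\varphi}_{t}^{\mathrm{app}}$ a genuine $G_{2}$-structure (no fiber collapses), while harmonicity is the integrability condition that makes the leading-order torsion vanish in the ALE limit. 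A cut-off in an annulus of radii of order $t^{1-\sigma}$ glues $\widetilde{\varphi}_{t}^{\mathrm{app}}$ to $P^{*}\varphi$, with resulting torsion supported in the annulus and of size $O(t^{N})$ for some $N>0$ in suitable weighted norms.

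The remaining task is to solve the nonlinear torsion-free equation for a correction of the form $\widetilde{\varphi}_{t} = \widetilde{\varphi}_{t}^{\mathrm{app}} + \ud \eta_{t}$ within the cohomology class, with $\eta_{t}$ a small $2$-form. I would set up weighted H\"older spaces adapted to the degenerating geometry, combining a Euclidean weight on the complement of the Eguchi-Hanson necks with an ALE weight on each neck, and prove uniform Schauder estimates by rescaling on each model geometry in the spirit of Proposition \ref{S_3_Prop_linear_est}. The main obstacle, and the central analytic point of Joyce-Karigiannis, is uniform invertibility of the linearization as $t\to 0$: the Eguchi-Hanson fiber carries an $L^{2}$ harmonic form (the K\"ahler class of the exceptional $S^{2}$), and this would obstruct the deformation were it not absorbed by the freedom to rotate the hyper-K\"ahler triple along $X^{3}$, an infinitesimal deformation space parametrized by $1$-forms on $X^{3}$. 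Pairing the fibrewise obstructions against these deformations produces an operator whose principal symbol is controlled by $|\alpha|$, so nowhere-vanishing of $\alpha$ is exactly what makes this pairing uniformly non-degenerate and delivers the $t$-independent bound on the inverse. A contraction-mapping (or Nash-Moser) iteration then converges for small $t$ and yields $\widetilde{\varphi}_{t}$; the $C^{0}$ convergence of $(\widetilde{\varphi}_{t}, g_{\widetilde{\varphi}_{t}})$ to $P^{*}(\varphi, g_{\varphi})$ away from $Q$ follows from the weighted estimates together with the fact that the Eguchi-Hanson region shrinks onto $Q$ as $t\to 0$.
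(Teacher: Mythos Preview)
This theorem is not proved in the paper; it is quoted verbatim as Theorem 1.1 of Joyce--Karigiannis \cite{Joyce21} and used only as background for the discussion in Section~\ref{S_5_2}. There is therefore no ``paper's own proof'' to compare against. Your sketch is a reasonable high-level outline of the Joyce--Karigiannis argument, but for the purposes of this paper no proof is required---a citation suffices.
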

			
		Morally speaking, a nowhere vanishing $1$-form together with the $G_{2}$-structure defines a complex structure on the normal bundle of $X^{3}$. For each $x\in X^{3}$, the orbifold quotient acts as $\CC^{2}\to \CC^{2}/\Z_{2}$ in the normal direction. As in the Eguchi-Hanson space, the resolution of singularities $P$ in Theorem \ref{S_6_Thm_JK} is defined by  the blowing up $\CC^{2}/\Z_{2}$ at $0$ fiberwise	. Moreover, the $5$-dimensional submanifold $Q$ is the union of the exceptional divisors for the blow-up.\\
		
		When the harmonic $1$-form $\alpha$ admits only regular zeros, it is expected in Section $8$, (vi) in \cite{Joyce21} that, topologically, near the regular zeros, the resolved manifold looks like a cone over $\mathbb{CP}^{3}$.  To resolve these singularities analytically, we need to construct a one parameter family of singular torsion-free $G_{2}$-structures $\varphi_{s},s\in (0,1]$ on $\R^{+}\times \mathbb{CP}^{3}$, such that near the cone singularity, $\varphi_{s}$ is asymptotic to the $G_{2}$-cone over $\mathbb{CP}^{3}$ as established by Bryant and Salamon \cite{Bryant89}, and the tangent cone of $g_{\varphi_{s}}$ at infinity is $\R^{3}\times (\CC^{2}/\Z_{2})$. This might give rise to a family of compact $G_{2}$-manifolds with isolated conical singularities. Moreover, it is expected that for each $s$, there is a coassociative fibration $\pi_{s}$ from $(\R^{+}\times \mathbb{CP}^{3},\varphi_{s})$ to $\R^{3}$, with generic fibers being the Eguchi-Hanson space. Furthermore, when $|\pi_{s}|$ is sufficiently large, the metric $g_{\varphi_{s}}$ on the $S^{2}$, as stated in Theorem \ref{S_6_Thm_JK}, is approximately a round $2$-sphere with area $\pi|\alpha_{s}|$. Here, $\alpha_{s}=(1+s)x_{3}\ud x_{3}-x_{1}\ud x_{1}-sx_{2}\ud x_{2}$ is a harmonic $1$-form on $\R^{3}$.\\
		
		This conjectural torsion-free $G_{2}$-structure can be thought of as a $G_{2}$-analogue of the singular warped quasi-asymptotically conical Calabi-Yau metric on the affine cone $V_{0}\subset \CC^{4}:z_{1}^{2}+\cdots+z_{4}^{2}=0$, which was constructed by Colon and Rochon \cite{Conlon23}. However, due to the lack of Yau's theorem and Donaldson-Sun theory on $G_{2}$-manifolds, the most feasible approach to constructing such a family of torsion-free $G_{2}$-structures is to use symmetric reduction via a cohomogeneity-$4$ $SO(3)$ action described as follows. Consider $\mathbb{CP}^{3}=S(\Lambda^{+}S^{4})$ as the twistor space of the $4$-sphere and let $S^{4}\subset \R^{5}=\R^{3}\oplus \R^{2}$. Identify $SO(3)$ as $SO(3)\times Id_{\R^{2}}$ in $SO(5)$, we define a $SO(3)$-action on $S^{4}$. Then $SO(3)$ acts on $\mathbb{CP}^{3}$ by lifting its action on $S^{4}$. It is possible to use the "multi-moment map," described in \cite{Kar21}, to further reduce the torsion-free conditions to a system of PDEs in $3$-variables. However, finding a solution is still a difficult task.\\
		 
		On the other hand, our gluing Theorem for non-degenerate $\Z_{2}$-harmonic $1$-forms can transform the above problem into a slightly different gluing problem, where there is a model space to resolve the singularities. Let $(\widetilde{\alpha}, \Sigma, L)$ be a nondegenerate $\Z_{2}$-harmonic $1$-form, admitting only branching zeros. One way to obtain such a $\Z_{2}$-harmonic $1$-form is to use the gluing theorem for nondegenerate $\Z_{2}$-harmonic $1$-forms. Suppose $\alpha$ is a harmonic $1$-form with only regular zeros. Let $\alpha_{\epsilon}$ be the family of nondegenerate $\Z_{2}$-harmonic $1$-forms constructed from $\alpha$ in \ref{S_3_Prop_Construction}. When $\epsilon$ is sufficiently small, $\alpha_{\epsilon}$ will only admit branching zeros.\\
		
		We now speculate on how the gluing theorem in \cite{Joyce21} can be generalized to resolve the orbifold $M^{7}/\langle \iota \rangle$ when the associative submanifold $X^{3}$ admits a nondegenerate $\Z_{2}$-harmonic $1$-form with only branching zeroes. As suggested in Section 6.6 of \cite{Joyce21}, the gluing construction for $G_{2}$-structure still works if we replace the harmonic $1$-form with a harmonic section in $T^{*}X\otimes Z$. Here, $Z$ is a principal $\Z_{2}$-bundle over $X$. Locally, multiplied by $-1$, the approximate $G_{2}$-structures are unchanged. Therefore, away from the branching set, the gluing ansatz proceeds exactly the same as in Section 6.6 of \cite{Joyce21}. The key difference is the construction of the approximate solution near the branching set $\Sigma$. The normal bundle of $\Sigma$ in $M$ can be decomposed as
			\begin{equation}\label{S_5_Eqn_Decomp}
				N_{M/\Sigma}=N_{X/\Sigma}\oplus N_{M/X}.
			\end{equation}
		The $G_{2}$-structure and the tangent vector of $\Sigma$ define a complex structure $J$ on $N_{M/\Sigma}$. Moreover, $N_{X/\Sigma}$ and $N_{M/X}$ are trivial complex subbundles, since $X$ is associative and hence $J$ preserves the composition. Again, we may use Fermi coordinates to identify a small neighborhood of the zero section in $N_{M/\Sigma}$ as a tubular neighborhood of $\Sigma$ in $M$. It should be noted here that the complex structure on $N_{M/X}$ induced by $J$ is different from the complex structure induced by the $\Z_{2}$-harmonic $1$-form in the overlap region.\\
		
		In the $G_{2}$-orbifold $M/\langle \iota \rangle$, the neighborhood of $\Sigma$ is modeled on
			\begin{equation*}
				\Sigma\times \CC\times (\CC^{2}/\Z_{2}).
			\end{equation*}
		Following the papers \cite{Sze20,Yan24,Chiu24} , there is a unique complete Calabi-Yau K\"ahler form on $\CC^{3}$ up to scaling, whose tangent cone of the induced metric at infinity is $\CC\times (\CC^{2}/\Z_{2})$. We choose a particular scaling of the K\"ahler form, which we denote as $\omega_{\CC^{3}}$, such that
			\begin{equation*}
				\frac{\omega_{\CC^{3}}^{3}}{6}=\Omega\wedge \overline{\Omega}.
			\end{equation*} 
		Here, $\Omega=\frac{3}{4}\ud z_{1}\wedge \ud z_{2}\wedge \ud z_{3}$ is the holomorphic volume form.\\
		
		We briefly describe the asymptotic behavior of $\omega_{\CC^{3}}$ at infinity. Now let
			\begin{equation*}
				\pi: \CC^{3}\to \CC, (z_{1},z_{2},z_{3})\mapsto z:=z_{1}^{2}+z_{2}^{2}+z_{3}^{2},
			\end{equation*}
			be the Lefschetz fibration. The generic fibers $\pi^{-1}(z), z\neq 0$ are biholomorphic to the quadric
				\begin{equation*}
					\pi^{-1}(1): z_{1}^{2}+z_{2}^{2}+z_{3}^{2}=1,
				\end{equation*}
			which is diffeomorphic to $T^{*}S^{2}$, while $\pi^{-1}(0)$ is the only singular fiber, which is biholomorphic to $\CC^{2}/\Z_{2}$. We define
			\begin{equation*}
				\phi:=c\big(|z|^{2}+\sqrt{H+|z|}\big), \quad H:=|z_{1}|^{2}+|z_{2}|^{2}+|z_{3}|^{2},
			\end{equation*}
		where $c$ is a normalized constant. The function $\phi$ defines a K\"ahler potential outside a large compact set in $\CC^{3}$ and
			\begin{equation*}
				\nabla_{g_{\omega_{\CC^{3}}}}^{k}\big(\omega_{\CC^{3}}-\sqrt{-1}\p \pb \phi\big)=O(\rho^{-3+\epsilon'-k}), \quad \rho^{2}=|z|^{2}+H^{1/2}, k\in \N
			\end{equation*}
		for any small positive constant $\epsilon'$ when $\rho$ is sufficiently large. The behavior is that asymptotically near infinity, $g_{\omega_{\CC^{3}}}$ looks like the semi-Ricci-flat metric on $\CC^{3}$, namely that restricted to the fibers it approximates the Stenzel metrics on the fibers $\pi^{-1}(z),z\in \CC$, and in the horizontal direction, it is dominated by the pullback of the Euclidean metric induced by $c\sqrt{-1}\ud z\wedge \ud \bar{z}$. Moreover, a rescaling of the metric $\omega_{\CC^{3}}$ amounts to a rescaling of the metric in the fiber direction up to a diffeomorphism of $\CC^{3}$. For a more detailed discussion, see Section 1 in \cite{Yan24}.\\
		
		The nondegenerate $\Z_{2}$-harmonic $1$-form is related in this context as follows. For each $z\neq 0$, we denote $S^{2}_{z}$ as the zero section of $\pi^{-1}(z)\simeq T^{*}S^{2}$. We integrate the $3$-form $\RRe{\Omega}$ along $S^{2}_{z}$ and obtain a nondegenerate $\Z_{2}$-harmonic $1$-form on $\CC$
			\begin{equation*}
				\lambda=\ud \RRe{z^{\frac{3}{2}}}=\int_{S^{2}_{z}}\RRe{\Omega}.
			\end{equation*}
		Note that the Lie group $SO(3)$ preserves the K\"ahler form as well as the fibers. We conclude that $\omega_{\CC^{3}}$ induces round metrics on each $S^{2}_{z}$. Suppose now the $\Z_{2}$-harmonic $1$-form
			\begin{equation*}
				\alpha\sim \ud \RRe {B(\theta)\zeta^{\frac{3}{2}}}
			\end{equation*}
		near the branching set $\Sigma$. We now construct a family of $G_{2}$-structures with small torsion from $\alpha$. We define 
			\begin{equation*}
				\varphi_{\alpha,t}:=\big(t^{2}B(\theta)\big)^{2/3}\omega_{\CC^{3}}\wedge\ud \theta+t^{2}\RRe{B(\theta)\Omega}
			\end{equation*}
		as an approximate torsion free $G_{2}$-structure near $\Sigma$ in $M^{7}$. In particular, the associated metric is
			\begin{equation*}
				g_{\alpha,t}=\big(t^{2}B(\theta)\big)^{2/3}g_{\omega_{\CC^{3}}}\oplus \ud \theta^{2},
			\end{equation*}
		and the Hodge dual of $\varphi_{\alpha,t}$ with respect to this metric is
			\begin{equation*}
				\psi_{\alpha,t}:=\frac{1}{2}\big(t^{2}B(\theta)\big)^{4/3}\omega_{\CC^{3}}+\IIm{t^{2}B(\theta)\Omega}\wedge \ud  \theta.
			\end{equation*}

		Follow Joyce's perturbation theorem (Chapter 11 in \cite{Joyce07}) of $G_{2}$-structure with small torsion. We find a closed $3$-form and a closed $4$-form that are close to $\varphi_{\alpha,t}$ and $\psi_{\alpha,t}$, respectively. Let
			\begin{equation*}
				\begin{split}
					&\widetilde{\omega}:=\sqrt{-1}\p \pb \big(B(\theta)^{2/3}\phi\big),\\
					&\widetilde{\Omega}:=\ud \bigg(B(\theta)\big(z_{1}\ud z_{2}\wedge \ud z_{3}-z_{2}\ud z_{1}\wedge \ud z_{3}+z_{3}\ud z_{1}\wedge \ud z_{2}\big)\bigg).
				\end{split}
			\end{equation*}
		We define the $3$-form and $4$-form to be
			\begin{equation*}
				\begin{split}
					\widetilde{\varphi_{\alpha,t}}:=t^{4/3}\widetilde{\omega}\wedge \ud \theta+\RRe{\widetilde{\Omega}},\\
					\widetilde{\psi_{\alpha,t}}:=\frac{1}{2}t^{8/3}\widetilde{\omega}+\IIm{t^{2}\widetilde{\Omega}}\wedge \ud \theta.
				\end{split}
			\end{equation*}
		The differences between those two $3$-forms and $4$-forms are bounded by
			\begin{equation*}
				\big|\varphi_{\alpha,t}-\widetilde{\varphi_{\alpha,t}}\big|_{g_{\alpha,t}}, \big|\psi_{\alpha,t}-\widetilde{\psi_{\alpha,t}}\big|_{g_{\alpha,t}}=O(t^{2/3}),
			\end{equation*}
		which tend to zero as $t\to 0$.\\
		
		The above discussion suggests that there is a numerous supply of compact torsion-free $G_{2}$ manifolds arising from the resolution of $G_{2}$ orbifolds, which generalizes Joyce and Karigiannis's construction.

\section*{Appendix A}\label{Appendix}
	In this section, we will give a proof of the abstract version of the Hamilton-Nash-Moser-Zehnder implicit function theorem. The idea of the proof follows \cite{Raymond89,Donaldson17A} with some modifications, a significant difference is that we only have an approximate inverse in the tangent spaces. Moreover, our conditions are slightly stronger in order to establish uniqueness and obtain an estimate for the solution.
		\begin{theorem}\label{S_App_Thm_HM}
			Let $X$ and $Y$ be graded Fr{\'e}chet spaces with a family of algebraic smoothing operators $\{S_{\theta}\}, \theta>0$. Let $U$ be an open set of $\bs{0}$ in $X$. Suppose there exists a smooth tame map
				\begin{equation*}
					F:U\to Y.
				\end{equation*}
			 Assume that there exists an even integer $m>0$, a real number $\delta>0$, and a sequence of positive numbers $C_{1},C_{2},C_{3},C_{k},k\geq m,$ such that the following holds: if $\|x\|_{3m}\leq \delta$, then
			\begin{equation*}
				\begin{split}
					&\|F(x)\|_{k}\leq C_{k}(1+\|x\|_{k+m}), \forall k\geq m\\
					&\|DF(X_{1})\|_{k}\leq C_{1}\|X_{1}\|_{k+m/2}, \forall 0\leq k\leq 2m\\
					&\|D^{2}F(X_{1},X_{2})\|_{k}\leq C_{2}\|X_{1}\|_{k+m/2}\|X_{2}\|_{k+m/2}, \forall 0\leq k\leq 2m
				\end{split}
			\end{equation*}
			Moreover, suppose further that there exists an operator $V(x,F): X\to Y$ such that
				\begin{equation*}
					\begin{split}
						& V\circ DF=Id_{X}+Q_{1}(x,F);\\
						& DF\circ V=Id_{Y}+Q_{2}(x,F);\\
						&\|Q_{1}(x,F)(X_{1})\|_{2m}\leq C_{3}\|F\|_{3m}\|X_{1}\|_{3m};\\
						&\|Q_{2}(x,F)(Y_{1})\|_{2m}\leq C_{3}\|F\|_{3m}\|Y_{1}\|_{3m};\\
						&\|V(x,F)(Y)\|_{k}\leq C_{4}\|Y\|_{k+m/2},\forall 0\leq k\leq m\\
						&\|V(x,F)(Y)\|_{k}\leq C_{k}(\|Y\|_{k+m}+\|x\|_{k+m}\|Y\|_{2m}), \forall k\geq m
					\end{split}
				\end{equation*}
		Then, if $\|F(0)\|_{2m}$ is sufficiently small, with a bound that depends only on $\delta$ and a finite number of $C_{i}$, there exists a unique $x\in X$ with $\|x\|_{3m}<\delta$ such that $F(x)=0$. Moreover, we have the estimate for the solution
			\begin{equation*}
				\|x\|_{m}\leq C\|F(0)\|_{2m},
			\end{equation*}
			for a constant $C$.
		\end{theorem}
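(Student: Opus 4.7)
The plan is to execute a Newton-type iteration with smoothing, in the style of Hamilton, Zehnder, and Donaldson. Fix a growth rate $\theta_n := \theta_0^{(3/2)^n}$, for $\theta_0$ to be chosen large, and define
\begin{equation*}
x_0 := 0, \qquad x_{n+1} := x_n - S_{\theta_n}\,V(x_n, F(x_n))\,F(x_n).
\end{equation*}
Writing $\Delta_n := x_{n+1}-x_n$ and combining Taylor's formula with the identity $DF\circ V = Id + Q_2$, a direct computation yields the master error formula
\begin{equation*}
F(x_{n+1}) = -Q_2(x_n,F(x_n))\,F(x_n) + DF(x_n)\,(Id - S_{\theta_n})\,V(x_n,F(x_n))\,F(x_n) + E_n,
\end{equation*}
with $E_n := \int_0^1(1-t)\,D^2F(x_n+t\Delta_n)(\Delta_n,\Delta_n)\,dt$.

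Inductively, I would maintain two hypotheses at stage $n$: (H1) the quadratic-decay bound $A_n := \|F(x_n)\|_{2m} \leq \theta_n^{-\beta}$ for a calibrated exponent $\beta$, and (H2) the Moser-type growth bound $\|x_n\|_k \leq M_k\,\theta_n^{(k-m)_+}$ for every $k$. Hypothesis (H2) with $k=3m$ yields $\|x_n\|_{3m} \leq M_{3m}\theta_n^{2m}$, which stays below $\delta$ whenever $\theta_0$ is large relative to $\|F(0)\|_{2m}^{-1}$. Granting (H1) and (H2), each of the three summands in the master error formula is bounded quadratically in $A_n$ up to a controlled power of $\theta_n$: the $Q_2$-term by $C_3\|F(x_n)\|_{3m}^2$, with $\|F(x_n)\|_{3m}$ controlled by interpolating $A_n$ against the tame bound $\|F(x_n)\|_K \leq C_K(1+\|x_n\|_{K+m})$ for a large auxiliary $K$; the smoothing term by trading regularity, $\|(Id-S_{\theta_n})VF\|_{5m/2} \leq C\,\theta_n^{-r}\|VF\|_{5m/2+r}$, with the high-regularity factor handled via the $V$-hypothesis and (H2); and the Taylor remainder $E_n$ by the $D^2F$ hypothesis together with the inverse-smoothing estimate $\|\Delta_n\|_{5m/2} \leq C\theta_n^{3m/2}\|VF(x_n)\|_m \leq C\theta_n^{3m/2}A_n$.

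The main obstacle is the Hamilton-Zehnder exponent calibration: one must choose $\beta$ (typically $\beta \geq 6m$, dictated by the Taylor error), the ratio $3/2$ in the definition of $\theta_n$, the auxiliary exponent $K$ in the $Q_2$-interpolation, and the Moser coefficients $M_k$ in (H2), so that all three error terms in the master formula propagate (H1) from $\theta_n$ to $\theta_{n+1}$, while (H2) is propagated by interpolating the coarse estimate $\|\Delta_n\|_m \leq C_4 A_n$ against the smoothed high-norm estimate $\|\Delta_n\|_k \leq C\theta_n^{k-m}\|\Delta_n\|_m$. The particular loss-of-derivative profile assumed on $V$ (only $m/2$ at low regularity, and $m$ with a linear $\|x\|$-factor at high regularity) admits an open range of workable $\beta$ provided $\theta_0^\beta$ dominates $\|F(0)\|_{2m}^{-1}$, which is precisely the smallness hypothesis on $\|F(0)\|_{2m}$ in the statement.

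Once the induction closes, the first-stage estimate $\|\Delta_n\|_m \leq C_4\,\|F(x_n)\|_{3m/2} \leq C_4 A_n$ is summable, so $x_n \to x_\infty$ in $\|\cdot\|_m$; interpolation with (H2) then promotes convergence to every norm, continuity of $F$ gives $F(x_\infty)=0$, and summing the geometric series yields $\|x_\infty\|_m \leq C\|F(0)\|_{2m}$. For uniqueness, if $y$ is a second zero with $\|y\|_{3m} < \delta$, set $z := y - x_\infty$; applying $V(x_\infty,0)$ to the Taylor identity $0 = DF(x_\infty)z + \tfrac{1}{2}D^2F(\xi)(z,z)$ and using $V\circ DF = Id + Q_1$ together with $Q_1(x_\infty,0) = 0$ (which follows from $\|Q_1(x,F)(\cdot)\|_{2m} \leq C_3\|F\|_{3m}\|\cdot\|_{3m}$) yields $\|z\|_m \leq C\|z\|_{2m}^2$. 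Combined with the interpolation $\|z\|_{2m} \leq C\|z\|_m^{1/2}\|z\|_{3m}^{1/2}$ and the bound $\|z\|_{3m} < 2\delta$, this forces $z = 0$ once $\delta$ is sufficiently small.
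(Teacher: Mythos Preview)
Your overall architecture matches the paper's: the same smoothed Newton iteration, the same three-term decomposition of $F(x_{n+1})$ into the $Q_2$-error, the smoothing error $DF\,(Id-S_{\theta_n})VF$, and the Taylor remainder, and essentially the same uniqueness and solution-estimate arguments (the paper uses ratio $5/4$ rather than $3/2$, but this is cosmetic). The one genuine error is your use of (H2) to keep the iterates in the domain. You write that (H2) at $k=3m$ gives $\|x_n\|_{3m}\le M_{3m}\theta_n^{2m}$ and that this ``stays below $\delta$'' for $\theta_0$ large; but $\theta_n\to\infty$, so $\theta_n^{2m}\to\infty$ and this bound is useless for confining $x_n$. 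Since all your tame hypotheses on $DF$, $D^2F$, $V$, $Q_i$ are only valid on $\{\|x\|_{3m}\le\delta\}$, the induction cannot even be started without an honest uniform bound on $\|x_n\|_{3m}$.

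The fix is already implicit in your own ingredients, and is exactly what the paper isolates as its hypothesis $II_{(j)}$: one must show the \emph{increments} decay at the $3m$-level. From $\|VF(x_n)\|_m\le C_4\|F(x_n)\|_{3m/2}\le C_4 A_n\le C_4\theta_n^{-\beta}$ and the smoothing bound $\|S_{\theta_n}\cdot\|_{3m}\le C\theta_n^{2m}\|\cdot\|_m$ you get $\|\Delta_n\|_{3m}\le C\theta_n^{2m-\beta}$, which is summable once $\beta>2m$ (and you already need $\beta\ge 6m$ for the Taylor term). Then $\|x_n\|_{3m}\le\sum_{l<n}\|\Delta_l\|_{3m}<\delta$ for $\theta_0$ large. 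The paper phrases this slightly differently, interpolating $\|v_j\|_m$ against a high-norm tame bound to obtain $\|v_j\|_{3m+3}\le M\theta_j^{-3}$; either way, it is this decay of increments at level $3m$---not a Moser growth bound on $\|x_n\|_{3m}$---that keeps the iteration inside the domain. Replace your sentence about (H2) at $k=3m$ with this argument and the sketch is correct.
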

		\begin{remark}
			The reason for requiring the degree of the maps to be $m/2$ in the case $0 \leq k \leq m$ is to allow sufficient flexibility for establishing the uniqueness and obtaining estimates for the solution $x$. This condition is easy to verify in practice by choosing $m$ sufficiently large.
		\end{remark}
		
		Our strategy is to modify the Newton iteration using a smoothing operator. We will consider the following algorithm.
	 	\begin{enumerate}
	 		\item Pick a sequence $\theta_{j}\to \infty$, such that $\theta_{0}\geq 2$ is sufficiently large and $\theta_{j+1}=\theta_{j}^{5/4}$. In particular, this construction implies
	 			\begin{equation*}
	 				\theta_{0}^{-1}\geq\sum_{j=0}^{\infty}\theta_{j}^{-3}.
	 			\end{equation*}
	 		\item Let $x_{0}=0$, and define $v_{j},x_{j+1}$ inductively
	 			\begin{equation*}
	 				v_{j}=-V(x_{j},F(x_{j}))(F(x_{j})), \quad x_{j+1}=x_{j}+S_{\theta_{j}}v_{j}.
	 			\end{equation*}
	 			In other words, $x_{j}=\sum_{l<j}S_{\theta_{l}}v_{l}$.
	 	\end{enumerate}
	 Then the solution $x$ is found as the limit of the sequence $\{x_{j}\}_{j\geq 0}$.
	 
	\begin{lemma}\label{S_App_Lemma_Unique}[Uniqueness]
		When $\delta$ is sufficiently small, the solution $F(x)=0$ in $\|x\|_{3m}\leq \delta$ is unique.
	\end{lemma}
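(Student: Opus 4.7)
The plan is to compare two candidate solutions $x_{1}, x_{2}$ with $\|x_{i}\|_{3m} \leq \delta$ via a second-order Taylor expansion at the first solution. Writing
\begin{equation*}
    0 = F(x_{2}) - F(x_{1}) = DF(x_{1})(x_{2} - x_{1}) + R,
\end{equation*}
with $R = \int_{0}^{1}(1-t)\, D^{2}F\bigl(x_{1} + t(x_{2} - x_{1})\bigr)(x_{2} - x_{1},\, x_{2} - x_{1})\, dt$, I would apply the approximate inverse $V(x_{1}, F(x_{1})) = V(x_{1}, 0)$ to this identity. Since $V \circ DF = \mathrm{Id}_{X} + Q_{1}(x_{1}, 0)$ and the hypothesis yields
\begin{equation*}
    \|Q_{1}(x_{1}, 0)(x_{2} - x_{1})\|_{2m} \leq C_{3}\|F(x_{1})\|_{3m}\|x_{2} - x_{1}\|_{3m} = 0,
\end{equation*}
the fact that $\|\cdot\|_{2m}$ is a genuine norm forces $Q_{1}(x_{1}, 0)(x_{2} - x_{1}) = 0$. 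Therefore $x_{2} - x_{1} = -V(x_{1}, 0)(R)$.

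Next I would estimate in the $m$-norm using the hypothesized tame bounds. The estimate $\|V(x_{1}, 0)(Y)\|_{m} \leq C_{4}\|Y\|_{3m/2}$ (the case $k = m$, which is why $m$ is taken even), combined with $\|D^{2}F(X, X)\|_{3m/2} \leq C_{2}\|X\|_{2m}^{2}$ (valid since $3m/2 \leq 2m$), yields
\begin{equation*}
    \|x_{2} - x_{1}\|_{m} \leq \tfrac{1}{2}C_{4}C_{2}\|x_{2} - x_{1}\|_{2m}^{2}.
\end{equation*}
Interpolation, proved from the existence of smoothing operators exactly as in Corollary \ref{S_4_Cor_Interpolation}, then gives $\|x_{2} - x_{1}\|_{2m}^{2} \leq C\|x_{2} - x_{1}\|_{m}\|x_{2} - x_{1}\|_{3m} \leq 2C\delta\|x_{2} - x_{1}\|_{m}$. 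Combining and choosing $\delta$ small enough that $C_{4}C_{2}C\delta < 1$ forces $\|x_{2} - x_{1}\|_{m} = 0$, hence $x_{1} = x_{2}$.

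The conceptual obstacle is that $V$ is only an approximate, not a true, inverse of $DF$, which would generically obstruct such a direct inversion argument. What rescues us is the quadratic form of the error estimate on $Q_{1}$: at an exact solution $F(x_{1}) = 0$ this error vanishes identically in the $2m$-norm, and $V(x_{1}, 0)$ becomes a genuine left inverse of $DF(x_{1})$. The remaining analytic content is a standard quadratic-contraction argument, where the smallness of $\delta$ provides the contraction factor after invoking the interpolation inequality between $\|\cdot\|_{m}$ and $\|\cdot\|_{3m}$.
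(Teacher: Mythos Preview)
Your proof is correct and follows essentially the same approach as the paper's: Taylor-expand at one solution, observe that $Q_{1}$ vanishes since $F(x_{1})=0$ so $V$ becomes an exact left inverse of $DF(x_{1})$, then bound the difference quadratically and close up via interpolation. The only cosmetic difference is that the paper estimates in the $0$-norm and interpolates between $\|\cdot\|_{0}$ and $\|\cdot\|_{2m}$, whereas you estimate in the $m$-norm and interpolate between $\|\cdot\|_{m}$ and $\|\cdot\|_{3m}$; both choices lead to the same conclusion.
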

	\begin{proof}
		Suppose $x,x'$ are two different solutions. Since $F(x)=F(x')=0$, we have
		\begin{equation*}
			V(F(x),x)DF(x)=Id
		\end{equation*}
		and
		\begin{equation*}
			x-x'=V(F(x),x)\bigg(\int_{0}^{1}D^{2}F(tx+(1-t)x')\big(x-x',x-x'\big)\ud t\bigg).
		\end{equation*}
    Then the tame estimate gives
		\begin{equation*}
			\|x-x'\|_{0}\leq C\|x-x'\|_{m}^{2}.
		\end{equation*}
		Then the interpolation formula implies
		\begin{equation*}
			\|x-x'\|_{0}\leq C'\|x-x'\|_{2m}\|x-x'\|_{0}.
		\end{equation*}
	Choosing $\delta < \frac{1}{2}C'$ implies $x-x'=0$.
	\end{proof}
	
	\begin{lemma}\label{S_App_Lemma_Estimate}[Estimate of the solution]
		Let $\delta$ and $x$ be as defined above, then there exists a constant $C$ such that
			\begin{equation*}
				\|x\|_{m}\leq C\|F(0)\|_{2m}
			\end{equation*}
	\end{lemma}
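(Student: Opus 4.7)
The idea is to combine the telescoping representation of the solution with the low-regularity tame estimate for the approximate inverse $V$. Since $x_{0}=0$ and $x_{j+1}=x_{j}+S_{\theta_{j}}v_{j}$ converges to $x$, we have
\[
x=\sum_{j\geq 0}S_{\theta_{j}}v_{j},
\]
so the triangle inequality together with the smoothing bound at $k_{1}=k_{2}=m$, namely $\|S_{\theta}y\|_{m}\leq C_{m,m}\|y\|_{m}$, yields
\[
\|x\|_{m}\leq C_{m,m}\sum_{j\geq 0}\|v_{j}\|_{m}.
\]
It thus suffices to prove $\sum_{j}\|v_{j}\|_{m}\leq C\|F(0)\|_{2m}$.

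I would next invoke the low-level hypothesis on $V$ at $k=m$:
\[
\|v_{j}\|_{m}=\|V(x_{j},F(x_{j}))(F(x_{j}))\|_{m}\leq C_{4}\|F(x_{j})\|_{3m/2}\leq C_{4}\|F(x_{j})\|_{2m}.
\]
This is where the reduced degree $m/2$ built into the hypothesis on $V$ for $0\leq k\leq m$ is essential: it lets us control $\|v_{j}\|_{m}$ by $\|F(x_{j})\|_{2m}$ rather than by some strictly higher Fr\'echet norm. For $j=0$ this already gives $\|v_{0}\|_{m}\leq C_{4}\|F(0)\|_{2m}$, so only the tail $j\geq 1$ remains.

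The heart of the argument is then to show $\sum_{j\geq 0}\|F(x_{j})\|_{2m}\leq C\|F(0)\|_{2m}$. Expanding $F$ around $x_{j}$ to second order and using $DF(x_{j})\circ V(x_{j},F(x_{j}))=Id+Q_{2}$, one obtains the standard Nash--Moser identity
\[
F(x_{j+1})=-Q_{2}(x_{j},F(x_{j}))(F(x_{j}))+DF(x_{j})((S_{\theta_{j}}-Id)v_{j})+R_{j},
\]
with $R_{j}$ the quadratic Taylor remainder. I would estimate each of the three contributions in $\|\cdot\|_{2m}$ using respectively the quadratic $Q_{2}$-bound, the smoothing-loss estimate $\|(Id-S_{\theta_{j}})v_{j}\|_{2m}\leq C\theta_{j}^{-m}\|v_{j}\|_{3m}$, and the $D^{2}F$-quadratic bound, and then feed in the a priori bounds $\|x_{j}\|_{3m}\leq \delta$ and uniform control of $\|v_{j}\|_{3m}$ supplied by the convergence argument that produced $x=\lim x_{j}$. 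This yields a recursion
\[
\|F(x_{j+1})\|_{2m}\leq C\,\theta_{j}^{-\kappa}\,\|F(x_{j})\|_{2m}
\]
for some fixed $\kappa>0$, and combined with the rapid growth $\theta_{j+1}=\theta_{j}^{5/4}$ this makes the series summable with sum at most a constant multiple of $\|F(0)\|_{2m}$.

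The main obstacle is that the recursion must be run \emph{at the level $2m$} rather than at some much lower level (as is standard in Nash--Moser arguments, where only a weak norm decays while a strong norm stays bounded). This forces the decay exponent $\kappa$ to beat simultaneously the contribution of $\|v_{j}\|_{3m}$ appearing in the smoothing-loss term; the aggressive growth rate $\theta_{j+1}=\theta_{j}^{5/4}$, together with the fact that the low-order hypotheses on $V$, $DF$ and $D^{2}F$ cost only $m/2$ derivatives, provides the margin needed to close the induction. Plugging the resulting bound into $\|x\|_{m}\leq C_{m,m}\sum_{j}\|v_{j}\|_{m}$ produces the claimed estimate.
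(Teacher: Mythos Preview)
Your approach is quite different from the paper's, and it contains a genuine gap. The claimed multiplicative recursion
\[
\|F(x_{j+1})\|_{2m}\leq C\,\theta_{j}^{-\kappa}\,\|F(x_{j})\|_{2m}
\]
does not follow from the three-term decomposition. Each of $F_{1},F_{2},F_{3}$ contains contributions that are \emph{absolute} in $\theta_{j}$ rather than linear in $\|F(x_{j})\|_{2m}$. For instance, estimating $F_{1}=DF(x_{j})\bigl((S_{\theta_{j}}-Id)v_{j}\bigr)$ in $\|\cdot\|_{2m}$ requires $\|v_{j}\|_{5m/2}$, hence (via the tame bound on $V$) the norm $\|F(x_{j})\|_{7m/2}$; when you control this through smoothing plus the tame estimate $\|F(x_{j})\|_{K}\leq C_{K}(1+\|x_{j}\|_{K+m})$, you pick up a term that depends only on $\|x_{j}\|$ and powers of $\theta_{j}$, not on $\|F(x_{j})\|_{2m}$. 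The same happens with $F_{3}$, whose bound is quadratic in $\|F(x_{j})\|_{3m}$. What the iteration actually yields is a bound of the form $\|F(x_{j+1})\|_{2m}\leq M_{3}\theta_{j}^{-6}$ (absolute), and summing such bounds gives only $\sum_{j}\|F(x_{j})\|_{2m}\leq C\theta_{0}^{-4}$, which is not $C\|F(0)\|_{2m}$ unless you additionally choose $\theta_{0}\asymp\|F(0)\|_{2m}^{-1/4}$. That extra choice salvages your argument but is not what you wrote.

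By contrast, the paper bypasses the iteration entirely with a two-line a~posteriori argument: since $F(x)=0$, the quadratic error $Q_{1}$ vanishes and $V(x,0)\circ DF(x)=Id$; writing $DF(x)(x)=-F(0)+\int_{0}^{1}D^{2}F(tx)(x,x)\,dt$ and applying $V$ gives
\[
\|x\|_{m}\leq C''\bigl(\|F(0)\|_{2m}+\|x\|_{2m}^{2}\bigr)\leq C'\bigl(\|F(0)\|_{2m}+\|x\|_{m}\|x\|_{3m}\bigr),
\]
where the last step is interpolation. Taking $\delta\leq (2C')^{-1}$ absorbs the second term. This direct route is both shorter and avoids any dependence on the algorithm.
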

	\begin{proof}
		Using the approximate inverse, we have
			\begin{equation*}
				x=V(F(x),x)\big(F(x)-F(0)\big)+V(F(x),x)\bigg(\int_{0}^{1}D^{2}F(tx)\big(x,x\big)\ud t\bigg).
			\end{equation*}
		Therefore,
			\begin{equation*}
				\|x\|_{m}\leq C''\big(\|F(0)\|_{2m}+\|x\|_{2m}^{2}\big)\leq C'(\|F(0)\|_{2m}+\|x\|_{m}\|x\|_{3m}).
			\end{equation*}
		Choosing $\delta\leq \frac{1}{2C'}$, we have $\|x\|_{m}\leq C\|F(0)\|_{2m}$.
	\end{proof}
	 
	 \begin{lemma}\label{S_App_Lemma_1}
	 	With the assumption in Theorem \ref{S_App_Thm_HM}, for a sufficiently large $\theta_{0}$, if $\|F(0)\|_{2m}\leq \theta_{0}^{-4}$, there exist constants $D_{k}$ and $M$, such that for all $j$
	 		\begin{enumerate}
	 			\item $I_{(j)}$: $\|x_{j}\|_{3m}< \delta$ and $\|F(x_{j})\|_{2m}\leq \theta_{j}^{-4}$;
	 			\item $II_{(j)}$: $\|v_{j}\|_{3m+3}\leq M\theta_{j}^{-3}$;
	 			\item $III_{(j)}$: $\forall k\geq m$, we have $(1+\|x_{j+1}\|_{k+2m})\leq D_{k}\theta_{j}^{2m}(1+\|x_{j}\|_{k+2m})$.
	 		\end{enumerate}
	 \end{lemma}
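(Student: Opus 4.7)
\textbf{Proof proposal for Lemma \ref{S_App_Lemma_1}.} The plan is to argue by strong induction on $j$, simultaneously establishing the three claims. For the base case $j=0$: $I_{(0)}$ is the hypothesis $\|F(0)\|_{2m}\leq \theta_{0}^{-4}$; for $II_{(0)}$, the tame estimate for $V$ gives $\|v_{0}\|_{3m+3}\leq C_{3m+3}(\|F(0)\|_{4m+3}+\|x_{0}\|_{4m+3}\|F(0)\|_{2m})=C_{3m+3}\|F(0)\|_{4m+3}$, so choosing $\theta_{0}$ large enough compared to this (fixed) quantity yields $\|v_{0}\|_{3m+3}\leq M\theta_{0}^{-3}$; for $III_{(0)}$, the smoothing estimate combined with the tame estimate for $V$ bounds $\|x_{1}\|_{k+2m}=\|S_{\theta_{0}}v_{0}\|_{k+2m}$ by a constant depending only on $k$, which is certainly $\leq D_{k}\theta_{0}^{2m}$.

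For the induction step, assume $I_{(l)},II_{(l)},III_{(l)}$ for all $l<j$. The order is $I_{(j)}\Rightarrow II_{(j)}\Rightarrow III_{(j)}$. The bound $\|x_{j}\|_{3m}<\delta$ follows by telescoping, since $\|S_{\theta_{l}}v_{l}\|_{3m}\leq C\|v_{l}\|_{3m+3}\leq CM\theta_{l}^{-3}$ and $\sum_{l}\theta_{l}^{-3}\leq \theta_{0}^{-1}$ is small. The subtle part is the Newton-type estimate on $\|F(x_{j})\|_{2m}$: write
\begin{equation*}
F(x_{j})=F(x_{j-1})+DF(x_{j-1})(S_{\theta_{j-1}}v_{j-1})+R_{j-1},
\end{equation*}
and use $DF(x_{j-1})v_{j-1}=-(Id+Q_{2})F(x_{j-1})$ to rewrite
\begin{equation*}
F(x_{j})=-Q_{2}(x_{j-1},F(x_{j-1}))F(x_{j-1})+DF(x_{j-1})\bigl((S_{\theta_{j-1}}-Id)v_{j-1}\bigr)+R_{j-1}.
\end{equation*}
Each term is then bounded in $\|\cdot\|_{2m}$: the smoothing error uses the tame estimate for $DF$ combined with $\|(Id-S_{\theta_{j-1}})v_{j-1}\|_{5m/2}\leq C\theta_{j-1}^{-m/2-6}M$; the quadratic Taylor remainder uses $\|D^{2}F\|$ combined with $\|S_{\theta_{j-1}}v_{j-1}\|_{5m/2}\leq CM\theta_{j-1}^{-3}$; the $Q_{2}$ term gives $\|F(x_{j-1})\|_{3m}^{2}$, which one controls by interpolation between the small $\|F(x_{j-1})\|_{2m}\leq \theta_{j-1}^{-4}$ and a high-norm $\|F(x_{j-1})\|_{K'}$ estimated through the tame bound $\|F(x_{j-1})\|_{K'}\leq C_{K'}(1+\|x_{j-1}\|_{K'+m})$ and the iterated $III_{(l)}$. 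For $K'$ of order $m^{2}$, the exponent balance yields $\theta_{j-1}^{-5-\eta}$ for some $\eta>0$, which sums with the other contributions to at most $\theta_{j-1}^{-5}=\theta_{j}^{-4}$ provided $\theta_{0}$ is sufficiently large.

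For $II_{(j)}$: the low-norm estimate $\|v_{j}\|_{m}\leq C_{4}\|F(x_{j})\|_{3m/2}\leq C_{4}\theta_{j}^{-4}$ comes directly from $I_{(j)}$ and the hypothesis on $V$; for a high norm $K$, the tame estimate gives $\|v_{j}\|_{K}\leq C_{K}'(1+\|x_{j}\|_{K+2m})$, which by iterating $III_{(l)}$ is bounded by $C_{K}D_{K}^{j}\theta_{j}^{8m}/\theta_{0}^{8m}$, and this $D_{K}^{j}$ is absorbed into $\theta_{j}^{\eta}$ for arbitrarily small $\eta$ since $\theta_{j}$ grows doubly exponentially. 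Interpolation of $\|v_{j}\|_{3m+3}$ between $\|v_{j}\|_{m}$ and $\|v_{j}\|_{K}$ then gives an exponent of $\theta_{j}$ approaching $-4$ as $K\to\infty$; choosing $K=O(m^{2})$ makes it $\leq -3$, so $\|v_{j}\|_{3m+3}\leq M\theta_{j}^{-3}$ for $M$ independent of $j$. For $III_{(j)}$: use smoothing $\|S_{\theta_{j}}v_{j}\|_{k+2m}\leq C\theta_{j}^{2m}\|v_{j}\|_{k}$, then the tame estimate on $V$ gives $\|v_{j}\|_{k}\leq C_{k}(1+\|x_{j}\|_{k+2m})$ after absorbing $\|x_{j}\|_{k+m}\theta_{j}^{-4}$ into $\|x_{j}\|_{k+2m}$, and adding $\|x_{j}\|_{k+2m}$ yields the claimed inequality with $D_{k}=1+CC_{k}$.

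The main obstacle is the combined bookkeeping in the $I_{(j)}$ step: matching the $\theta_{j-1}^{-5}$ target requires that the $Q_{2}$ term, which is the only one tied to the approximate-inverse error rather than a smoothing gain, beat the threshold. This forces a careful interpolation of $\|F(x_{j-1})\|_{3m}$ that balances the small $\|F\|_{2m}$ against high norms of $x_{j-1}$ whose growth is tracked through $III$, and it is here that the slightly stronger degree-$m/2$ assumption on $V$ at low norms and the factor $\theta_{j-1}^{2m}$ in $III$ both get consumed. Once this estimate is secured, $II_{(j)}$ and $III_{(j)}$ are essentially formal consequences, so the entire induction closes for all $\theta_{0}$ larger than a threshold depending only on $\delta$ and finitely many of the constants $C_{1},C_{2},C_{3},C_{4},C_{k}$.
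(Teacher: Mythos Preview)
Your proposal is correct and follows essentially the same approach as the paper: the same induction scheme, the same decomposition of $F(x_{j})$ into the smoothing-error term $DF\bigl((S_{\theta}-Id)v\bigr)$, the quadratic Taylor remainder, and the $Q_{2}$-term, and the same interpolation between the small $2m$-norm of $F$ and a high norm controlled through the iterated $III$-bound. The only cosmetic differences are the order of presentation (the paper proves $II_{(j)}$ from $I_{(j)}$ and $III_{(j-1)}$ rather than spelling out the base case separately, and proves $I_{(j+1)}$ last) and the exact choice of the high index $K$ (the paper fixes $K=16m^{2}+37m+12$ and takes $\theta_{0}\geq D_{K}$, which is your ``$D_{K}^{j}$ is absorbed into $\theta_{j}^{\eta}$'' made explicit).
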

	 \begin{proof}
	 The hypothesis $I_{(0)}$ and $III_{(0)}$ hold automatically by construction. We first show that $II_{(j)}$ can be deduced from $I_{(j)}$ and $III_{(j-1)}$
	 
	 \subsubsection*{Proof of $II_{(j)}$}
	 We use the tame estimate for the approximate inverse
	 	\begin{equation}\label{S_App_Eqn_v_1}
	 		\|v_{j}\|_{k}\leq C_{k}(\|F(x_{j})\|_{k+m}+\|x_{j}\|_{k+m}\|F(x_{j})\|_{2m}), k\geq m.
	 	\end{equation}
		For $k=m$ and using $I_{(j)}$, we get
		\begin{equation}\label{S_App_Eqn_v_2}
			\|v_{j}\|_{m}\leq C_{m}(1+\delta)\theta_{j}^{-4}.
		\end{equation}
		Then $II_{(j)}$ will be obtained by interpolating \ref{S_App_Eqn_v_2} and an estimate
		\begin{equation*}
			\|v_{j}\|_{K}\leq M_{1}\theta_{j}^{N},
		\end{equation*}
		for a fixed large constant $K$. To make a choice, we apply the tame estimate for $F(x_{j})$ to \ref{S_App_Eqn_v_1}
		\begin{equation}\label{S_App_Eqn_v_3}
			\|v_{j}\|_{k}\leq C_{k}(C_{k+m}+1)(1+\|x_{j}\|_{k+2m}).
		\end{equation}
		
		Since $III_{j'}$ holds for $\forall j'<j$. We now have
		\begin{equation}\label{S_App_Eqn_x_1}
			(1+\|x_{j}\|_{K+2m})\leq (D_{K}\theta_{0}^{-1})^{j}\theta_{j}^{8m+2}
		\end{equation}
		The interpolation formula gives
		\begin{equation*}
			\|v_{j}\|_{3m+3}\leq M_{2}(K,m)\|v_{j}\|_{m}^{\frac{K-3m-3}{K-m}}\|v_{j}\|_{K}^{\frac{2m+3}{K-m}}.
		\end{equation*}
		Choose $K=16m^{2}+37m+12$ and $\theta_{0}$ large enough such that $\theta_{0}\geq D_{K}$, then we get
		\begin{equation*}
			\|v_{j}\|_{3m+3}\leq M\theta_{j}^{-3},
		\end{equation*}
		where $M$ depends only on $K,m,\delta$.
		\begin{remark}
			We will see later that the choice of $D_{k}$ can be independent of the choice of $\theta_{0}$.
		\end{remark}
	
	\subsubsection*{Proof of $III_{(j)}$}
		This follows from the construction and tame estimates. Indeed, since $x_{j+1}=x_{j}+S_{\theta_{j}}v_{j}$,
		\begin{equation*}
			\|x_{j+1}\|_{k+2m}\leq \|x_{j}\|_{k+2m}+\|S_{\theta_{j}}v_{j}\|_{k+2m}\leq \|x_{j}\|_{k+2m}+C_{k,k+2m}\theta_{j}^{2m}\|v_{j}\|_{k}.
		\end{equation*}
		Here, $C_{k,k+2m}=e^{2m}$. Equation \ref{S_App_Eqn_v_3} and the induction hypothesis imply
		\begin{equation*}
			\begin{split}
				\theta_{j}^{2m}\|v_{j}\|_{k}\leq C_{k}(C_{k+m}+1)\theta_{j}^{2m}(1+\|x_{j}\|_{k+2m}).
			\end{split}
		\end{equation*}
		Now we can choose $D_{k}=1+C_{k,k+2m}C_{k}(C_{k+m}+1)$.
		
	\subsubsection*{Proof of $I_{(j+1)}$}
		The main difference between our case and that of X. Saint Raymond lies in the estimate of $F(x_{j})$. First, since $x_{j+1}=\sum_{l\leq j}S_{\theta_{l}}v_{l}$, then $\forall t\in [0,1]$
			\begin{equation*}
				\|x_{j}+tS_{\theta_{j}}v_{j}\|_{3m}\leq M\sum_{l\leq j}\theta_{j}^{-3}.
			\end{equation*}
		From the construction $\sum_{l\leq j}\theta_{j}^{-3}\leq \theta_{0}^{-1}$, we again choose $\theta_{0}$ large enough so that $M\theta_{0}^{-1}\leq \delta$. When $t=0$, this gives $\|x_{j+1}\|_{3m}\leq \delta$.\\
		
		We now give an estimate for $F(x_{j+1})$. Taylors' expansion yields to
			\begin{equation*}
				F(x_{j+1})=F(x_{j})+DF(x_{j})(S_{\theta_{j}}v_{j})+\int_{0}^{1}(1-t)D^{2}F(x_{j}+tS_{\theta_{j}}v_{j})(S_{\theta_{j}}v_{j},S_{\theta_{j}}v_{j})\ud t.
			\end{equation*}
		Since we have $v_{j}=-V(x_{j},F(x_{j}))(F(x_{j}))$, we have
			\begin{equation*}
				F(x_{j})=DF(x_{j})(-v_{j})-Q_{2}(x_{j},F(x_{j}))(F(x_{j})).
			\end{equation*}
		Write $F(x_{j})=F_{1}+F_{1}+F_{3}$ with
			\begin{equation*}
				\begin{split}
					&F_{1}=DF(x_{j})\big(S_{\theta_{j}}v_{j}-v_{j}\big),\\
					&F_{2}=\int_{0}^{1}(1-t)D^{2}F(x_{j}+tS_{\theta_{j}}v_{j})(S_{\theta_{j}}v_{j},S_{\theta_{j}}v_{j})\ud t,\\
					&F_{3}=-Q_{2}(x_{j},F(x_{j}))(F(x_{j})).
				\end{split}
			\end{equation*}
		We now have
			\begin{equation*}
				\begin{split}
					&\|F_{1}\|_{2m}\leq C_{1}\|S_{\theta_{j}}v_{j}-v_{j}\|_{3m}\leq C_{1}C_{3m,3m+3}\theta_{j}^{-3}\|v_{j}\|_{3m+3}\leq C_{1}Me^{3}\theta_{j}^{-6},\\
					&\|F_{2}\|_{2m}\leq C_{2}\|S_{\theta_{j}}v_{j}\|_{3m}^{2}\leq C_{2}\|v_{j}\|_{3m}^{2}\leq C_{2}M^{2}\theta_{j}^{-6}.
				\end{split}
			\end{equation*}
		Moreover, using the estimate for the quadratic error, we get
			\begin{equation*}
				\|F_{3}\|_{2m}\leq C_{3}\|F(x_{j})\|_{3m}^{2}.
			\end{equation*}
		We now bound $\|F(x_{j})\|_{3m}$ using the smoothing operator. Choose $\bar{\theta}_{j}=\theta_{j}^{1/m}$, choose $k=K+m=16m^{2}+38m+12$, we can interpolate
			\begin{equation*}
				\begin{split}
					\|F(x_{j})\|_{3m}&\leq \|S_{\bar{\theta}_{j}}F(x_{j})\|_{3m}+\|(1-S_{\bar{\theta}_{j}})F(x_{j})\|_{3m}\\
					&\leq C_{2m,3m}\bar{\theta}_{j}^{m}\|F(x_{j})\|_{2m}+C_{3m,K+m}\bar{\theta}_{j}^{2m-K}\|F(x_{j})\|_{K+m}\\
					&\leq e^{m}\theta_{j}^{-3}+C_{3m,K+m}C_{K+m}\bar{\theta}_{j}^{2m-K}\big(1+\|x_{j}\|_{K+2m}\big)\\
					&\leq e^{m}\theta_{j}^{-3}+C_{3m,K+m}C_{K+m}\theta_{j}^{-8m-34}.
				\end{split}
			\end{equation*}
		Gathering everything, we conclude that there exists a constant $M_{3}$ such that
			\begin{equation*}
				\|F(x_{j+1})\|_{2m}\leq M_{3}\theta_{j}^{-6}\leq (M_{3}\theta_{j}^{-5/9})\theta_{j+1}^{-3}.
			\end{equation*}
		When $\theta_{0}$ sufficiently large, $\|F(x_{j+1})\|_{2m}\leq \theta_{j+1}^{-3}$ and we complete the proof of Lemma \ref{S_App_Lemma_1}.
	\end{proof}
	
	With the above estimate, we can now give a bound for $\|v_{j}\|_{k}$ for every $k\geq m$.
	\begin{lemma}
		There exist constants $M_{k}, k\geq 2m$, such that $v_{k}$ satisfies
		\begin{equation*}
			\|v_{j}\|_{k}\leq M_{k}\theta_{j}^{-3}
		\end{equation*}
	\end{lemma}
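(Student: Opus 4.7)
My plan is to obtain this estimate by interpolating between a low-index bound of order $\theta_j^{-4}$ and a high-index bound of controlled polynomial growth in $\theta_j$, following the template already used to establish statement $II_{(j)}$ inside Lemma \ref{S_App_Lemma_1}. Fix $k\geq 2m$. Since $I_{(j)}$ holds for every $j$, the tame estimate \ref{S_App_Eqn_v_1} at $k=m$ supplies the low-index bound \ref{S_App_Eqn_v_2},
\begin{equation*}
    \|v_j\|_m\leq C_m(1+\delta)\theta_j^{-4}.
\end{equation*}

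For the high end, choose a large integer $K=K(k,m)\geq k$ (to be specified). Iterating $III_{(l)}$ for $l<j$ exactly as in \ref{S_App_Eqn_x_1} gives
\begin{equation*}
    (1+\|x_j\|_{K+2m})\leq (D_K\theta_0^{-1})^{j}\,\theta_j^{8m+2},
\end{equation*}
and plugging this into \ref{S_App_Eqn_v_3} produces
\begin{equation*}
    \|v_j\|_K\leq C_K(C_{K+m}+1)(D_K\theta_0^{-1})^{j}\theta_j^{8m+2}.
\end{equation*}
Because $\theta_j=\theta_0^{(5/4)^j}$ grows doubly exponentially while $(D_K\theta_0^{-1})^{j}$ grows at most geometrically in $j$, one has $(D_K\theta_0^{-1})^{j}\leq C'_K\theta_j^{1/K}$ uniformly in $j$ for some constant $C'_K$ depending on $K$ and $\theta_0$. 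Thus $\|v_j\|_K\leq \tilde M_K\theta_j^{8m+3}$.

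Now apply the interpolation formula (Corollary \ref{S_4_Cor_Interpolation}) between indices $m$ and $K$ to obtain
\begin{equation*}
    \|v_j\|_k\leq C(k,m,K)\,\|v_j\|_m^{(K-k)/(K-m)}\|v_j\|_K^{(k-m)/(K-m)}.
\end{equation*}
The resulting exponent of $\theta_j$ on the right-hand side equals $\bigl[-4(K-k)+(8m+3)(k-m)\bigr]/(K-m)$, which tends to $-4$ as $K\to\infty$ with $k,m$ fixed. Hence for $K=K(k,m)$ sufficiently large this exponent is at most $-3$; absorbing all constants into $M_k$ delivers $\|v_j\|_k\leq M_k\theta_j^{-3}$. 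The main obstacle is the need to control $(D_K\theta_0^{-1})^j$ uniformly in $j$ even when $D_K>\theta_0$, since $\theta_0$ was fixed once and for all in Lemma \ref{S_App_Lemma_1} based on a specific $K$; this is resolved by exploiting the doubly exponential growth of $\theta_j$, which dominates any geometric growth in $j$ and lets us absorb the extra factor into a negligible power of $\theta_j$.
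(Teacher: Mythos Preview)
Your argument is correct and follows the same overall template as the paper: bound $\|v_j\|_m$ by $\theta_j^{-4}$ using $I_{(j)}$, bound a high-index norm by a polynomial power of $\theta_j$, then interpolate. The difference lies in how the high-index bound is obtained. You reuse the raw iterated estimate \eqref{S_App_Eqn_x_1} at a large auxiliary level $K$, which leaves the potentially growing factor $(D_K\theta_0^{-1})^j$; you then dispose of it by noting that $\theta_j=\theta_0^{(5/4)^j}$ grows doubly exponentially, so any geometric factor in $j$ is dominated by $\theta_j^{\epsilon}$ for arbitrarily small $\epsilon$. The paper instead rewrites $III_{(j)}$ as a recursion for $a_j=(1+\|x_j\|_{k+2m})\theta_j^{-8m-2}$, namely $a_{j+1}\leq D_k\theta_j^{-1/2}a_j$, which becomes contracting once $\theta_j\geq D_k^{2}$; hence $a_j$ is bounded and one gets $\|v_j\|_{k}\leq D_k'\theta_j^{8m+2}$ for every $k$ without ever needing $\theta_0\geq D_k$. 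The paper then interpolates via the smoothing decomposition $S_{\bar\theta_j}+(1-S_{\bar\theta_j})$ rather than invoking Corollary \ref{S_4_Cor_Interpolation}, which is of course equivalent. Your route has the advantage of recycling \eqref{S_App_Eqn_x_1} verbatim and isolating clearly the one new idea needed (the doubly-exponential domination); the paper's route is a bit cleaner in that it produces a uniform polynomial bound in $\theta_j$ directly, with no asymptotic comparison required.
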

	\begin{proof}
		We will again use the interpolation formula. Equation \ref{S_App_Eqn_v_2} and $III_{(j)}$ imply
			\begin{equation*}
				(1+\|x_{j+1}\|_{k+2m})\theta_{j+1}^{-8m-2}\leq (D_{k}\theta_{j}^{-1/2})(1+\|x_{j}\|_{k+2m})\theta_{j}^{-8m-2}.
			\end{equation*}
		Since $\theta_{j}\geq D_{k}$ for sufficiently large $k$, the sequence $(1+\|x_{j}\|_{k+2m})\theta_{j}^{-8m-2}$ is then bounded with respect to $j$. Then we obtain an estimate
			\begin{equation*}
				\|v_{j}\|_{k+2m}\leq D_{k}'\theta_{j}^{8m+2}. 
			\end{equation*}
		Let $\bar{\theta}_{j}=\theta_{j}^{\frac{1}{k-m}}$ and $N=(8m+5)(k-m)$, we have
			\begin{equation*}
				\begin{split}
					\|v_{j}\|_{k}&\leq \|S_{\bar{\theta}_{j}} v_{j}\|_{k}+\|(1-S_{\bar{\theta}_{j}})v_{j}\|_{k}\\
					&\leq C_{m,k}\theta_{j}\|v_{j}\|_{m}+C_{k,k+N}\bar{\theta}_{j}^{-N}\|v_{j}\|_{k+N}\\
					&\leq M_{k}\theta_{j}^{-3}.
				\end{split}
			\end{equation*}
	\end{proof}
		
	To complete the proof, we show that $\|S_{\theta_{j}}v_{j}\|_{k}\leq M_{k}\theta_{j}^{-3}$ exponentially decays. As a result, the sequence
		\begin{equation*}
			x_{j}=\sum_{l<j}S_{\theta_{j}}v_{j}
		\end{equation*}
		converges in each norm $\|\cdot\|_{k}, k\geq m$. We denote $x$ as the limit. In particular, we have
		\begin{equation*}
			\begin{split}
				\|F(x)\|_{2m}&=\|F(x_{j})\|_{2m}+\|\int_{0}^{1}DF((1-t)x_{j}+tx)(x-x_{j})\|_{2m}\\
				&\leq \|F(x_{j})\|_{2m}+C_{1}\|x-x_{j}\|_{3m}\xrightarrow{j\to \infty} 0.
			\end{split}
		\end{equation*}
	To conclude, $x$ is the unique solution to $F(x)=0$ in $\|x\|_{3m}\leq \delta$ with a bound
		\begin{equation*}
			\|x\|_{m}\leq C\|F(0)\|_{2m}.
		\end{equation*}

\section*{Appendix B}\label{AppendixB}
    In this section, we will sketch a proof of Proposition \ref{S_3_Prop_Construction}.
        \begin{proposition}
            For any positive numbers $h_{1},\cdots, h_{n-1}$, there exists a non-degenerate $\Z_{2}$-harmonic function $f_{\bs{h}}$ on $\R^{n}$, whose branching set is a codimension-$2$ ellipsoid 
				\begin{equation*}
					E_{\bs{h}}: \sum_{i=1}^{n-1}\frac{x_{i}^{2}}{h_{i}^{2}}=1, x_{n}=0,
				\end{equation*}
				and such that $\ud f_{\bs{h}}\neq 0$ outside $E_{\bs{h}}$, and that at a large distance, we can pick a single-valued branch of $f_{\bs{h}}$, on which
				\begin{equation*}
					f_{\bs{h}}=a_{0}-\sum_{i=1}^{n}a_{i}x_{i}^{2}+O'(|\bs{x}|^{2-n}).
				\end{equation*}
				Here, let $S(y)=\prod_{i=1}^{n-1}(y+h_{i}^{2})$ and $a_{i}$ be constants given by
				\begin{equation*}
					\begin{split}
						&a_{i}=\frac{\prod_{j=1}^{n-1}h_{j}}{2}\int_{0}^{\infty}\frac{\ud u}{(u^{2}+h_{i}^{2})\sqrt{S(u^{2})}}, 1\leq i\leq n-1;\\
						&a_{n}=-\frac{\prod_{j=1}^{n-1}h_{j}}{2}\int_{0}^{\infty}\frac{S'(u^{2})\ud u}{S(u^{2})^{3/2}};\\
						&a_{0}=\frac{\prod_{j=1}^{n-1}h_{j}}{2}\int_{0}^{\infty}\frac{\ud u}{\sqrt{S(u^{2})}}.
					\end{split}
				\end{equation*}
				Moreover, the map $(h_{1},\cdots, h_{n-1})\mapsto (a_{1},\cdots,a_{n-1})$ from $(\R_{+})^{n-1}$ to $(\R_{+})^{n-1}$ is bijective.
        \end{proposition}

    	We sketch the construction using Lawlor's necks. We will begin with a more general scheme of constructing $\Z_{2}$-harmonic $1$-forms from a certain family of special Lagrangians in $\CC^{n}$. Regard $T^{*}\R^{n}=(x_{k},y_{k}\frac{\p}{\p x_{k}})$ as $\CC^{n}$ such that $z_{k}=x_{k}+\sqrt{-1}y_{k}$ is the complex coordinate, and let $(\CC^{n},g,\omega,\Omega)$ be the flat Calabi-Yau structure, where $g$ is the Euclidean metric on $\CC^{n}$ and
			\begin{equation*}
				\omega=\sum_{k=1}^{n}\frac{\sqrt{-1}}{2}\ud z_{k} \wedge \ud \overline{z_{k}},\Omega=\ud z_{1} \wedge \cdots \wedge \ud z_{n}.
			\end{equation*}
		Moreover, the symplectic form $\omega=\ud \lambda$ is exact, with the Liouville form $\lambda=-\sum_{k=1}^{n}y_{k}\ud x_{k}$. Let $S$ be a Lagrangian in $\CC^{n}$, then the restriction of $-\lambda$ on $S$ defines a closed $1$-form on $S$, and $S$ is locally a graph from $\R^{n}$ to $T^{*}\R^{n}$ representing this form. We call $S$ an \textbf{exact} Lagrangian if $\lambda|_{C}$ is exact.\\
		
		Given a family of angles $\phi_{i}\in (0,\pi)$ parameterized by $t$, such that
			\begin{equation*}
				\begin{split}
					& \phi_{1}+\cdots+\phi_{n}=m\pi, \quad m=1,\cdots, n-1,\\
					& \phi_{k}= 2t^{-1}a_{k}+o(t^{-1}), \quad 1\leq k\leq n-m,\\
					& \phi_{l}=\pi-2t^{-1}a_{l}+o(t^{-1}),\quad n-m+1\leq l\leq n,
				\end{split}
			\end{equation*}
		and define a pair of special Lagrangian planes
			\begin{equation*}
				\Pi_{\bs{\phi}}^{\pm}=(e^{\pm\II \phi_{k}/2}x_{k}, e^{\II(\pi\pm\phi_{l})/2}x_{l}),  1\leq k\leq n-m<l\leq n.
			\end{equation*}
		If there exists a family of special Lagrangians $S_{t}$ such that
			\begin{enumerate}
				\item $S_{t}$ meets $x$-plane in a fixed compact codimension-$2$ submanifold $\Sigma$.
				\item  Each $S_{t}$ is invariant under the involution $(x,y)\mapsto (x,-y)$. In other words, $S_{t}$ can be viewed as the graph of a $2$-valued $1$-form $\alpha_{t}$, which branches along $\Sigma$.
				\item The $1$-form $t\alpha_{t}$ converges to $\alpha$ in $C^{\infty}_{loc}(\R^{n}\setminus \Sigma)$, with $|\alpha|,|\nabla \alpha|\in L^{2}_{loc}$.
				\item $S_{t}$ is asymptotic to the $\cup \Pi_{\bs{\phi}}^{\pm}$ at infinity,
			\end{enumerate}
		then the limit $\alpha$ is a $\Z_{2}$-harmonic $1$-form. On a single-valued branch on $\R^{n}\setminus B$
			\begin{equation*}
				\alpha=\ud \bigg(\sum_{k=1}^{n-m}a_{k}(x^{k})^{2}-\sum_{l=n-m+1}^{n}a_{l}(x^{l})^{2}+O'(|x|^{2-n})\bigg).
			\end{equation*}
		Moreover, let $r$ be the distance to $\Sigma$, if one can show $r^{-\frac{1}{2}}|\alpha|>0$ near the branching set, then $\alpha$ is non-degenerate.\\

		The only known examples of such a family of special Lagrangians are Lawlor's necks, which correspond to the case when $m=1$ or $m=n-1$. Following the above discussion, we can write down the parametrization of the graph of the $\Z_{2}$-harmonic $1$-form $\ud f_{\bs{h}}$ from Lawlor's necks. Let $Pr_{X}$, $Pr_{Y}$ be the projection to the $x$-plane and $y$-plane respectively, then the graph is 
			\begin{equation}\label{S_3_Eqn_Graph}
				\begin{split}
					& Pr_{X}=(w_{k}\sqrt{h_{k}^{2}+s^{2}},-w_{n}s),\\
					& Pr_{Y}=(w_{k}\beta_{k}(s)\sqrt{h_{k}^{2}+s^{2}},w_{n}(1-s\beta_{n}(s))),(w_{k},w_{n})\in S^{n-1}, s\in \R.
				\end{split}
			\end{equation}
			Here, let $C(y^{2})=\prod_{j=1}^{n-1}(1+h_{j}^{-2}y^{2})$, then
			\begin{equation*}
				\begin{split}
					& \beta_{k}(s)=\int_{0}^{s}\frac{\ud y}{(h_{k}^{2}+y^{2})\sqrt{C(y^{2})}},\\
					& \beta_{n}(s)=-\int_{0}^{s}\frac{(C(y^{2})-1)\ud y}{y^{2}(C(y^{2})+\sqrt{C(y^{2})})}.
				\end{split}
			\end{equation*}
\nocite{*}

\bibliographystyle{alpha}

\end{document}